
\documentclass[11pt]{amsart}
\usepackage{amsmath,amssymb,amsthm}
\usepackage[hidelinks]{hyperref}
\usepackage{graphicx}
\usepackage{tikz}  
\usepackage{pgfplots}
\usepackage{fullpage}
\usepackage[capitalize]{cleveref}
\usepackage{mathrsfs}
\usepackage{nicefrac}

\crefname{enumi}{}{}
\crefname{equation}{}{}

\newtheorem{theorem}{Theorem}[section]

\newtheorem{lemma}[theorem]{Lemma}
\newtheorem{proposition}[theorem]{Proposition}

{\theoremstyle{definition}

\newtheorem{remark}[theorem]{Remark}

}

\newcommand{\R}{\mathbb{R}}
\newcommand{\E}{\mathbb{E}}

\newcommand{\supp}{\operatorname{supp}}

\newcommand{\includegraphicspage}[2][0.3\textwidth]{\begin{minipage}{#1}\includegraphics[width=\textwidth]{#2}\end{minipage}}


\setcounter{tocdepth}{2}
\makeatletter
\def\@tocline#1#2#3#4#5#6#7{\relax
\ifnum #1>\c@tocdepth 
\else
\par \addpenalty\@secpenalty\addvspace{#2}%
\begingroup \hyphenpenalty\@M
\@ifempty{#4}{%
\@tempdima\csname r@tocindent\number#1\endcsname\relax
}{%
\@tempdima#4\relax
}%
\parindent\z@ \leftskip#3\relax \advance\leftskip\@tempdima\relax
\rightskip\@pnumwidth plus4em \parfillskip-\@pnumwidth
#5\leavevmode\hskip-\@tempdima
\ifcase #1
\or\or \hskip 1em \or \hskip 2em \else \hskip 3em \fi%
#6\nobreak\relax
\dotfill\hbox to\@pnumwidth{\@tocpagenum{#7}}\par
\nobreak
\endgroup
\fi}
\makeatother

\newcommand{\noncontentsline}[3]{}
\newcommand{\tocless}[2]{\bgroup\let\addcontentsline=\noncontentsline#1{#2}\egroup}

\begin{document}

\title{Non-parametric estimation of non-linear diffusion coefficient in parabolic SPDEs}

\author[Andersson]{Martin Andersson}
\address{Martin Andersson,
  Department of Mathematics,
  Uppsala University,
  S-751 06 Uppsala,
  Sweden}
\author[Avelin]{Benny Avelin}
\address{Benny Avelin,
  Department of Mathematics,
  Uppsala University,
  S-751 06 Uppsala,
  Sweden}
\author[Garino]{Valentin Garino}
\address{Valentin Garino,
  Department of Mathematics,
  Uppsala University,
  S-751 06 Uppsala,
  Sweden}
\author[Ilmonen]{Pauliina Ilmonen}
\address{Pauliina Ilmonen,
  Department of Mathematics and Systems Analysis,
  Aalto University School of Science,
  00076 Aalto,
  Finland}
\author[Viitasaari]{Lauri Viitasaari}
\address{Lauri Viitasaari,
  Department of Information and Service Management, Aalto University School of Business, 00076 Aalto, Finland}

\begin{abstract}
  In this article, we introduce a novel non-parametric predictor, based on conditional expectation, for the unknown diffusion coefficient function $\sigma$ in the stochastic partial differential equation $Lu = \sigma(u)\dot{W}$, where $L$ is a parabolic second order differential operator and $\dot{W}$ is a suitable Gaussian noise. We prove consistency and derive an upper bound for the error in the $L^p$ norm, in terms of discretization and smoothening parameters $h$ and $\varepsilon$. We illustrate the applicability of the approach and the role of the parameters with several interesting numerical examples.
\end{abstract}

\maketitle


\tableofcontents

\section{Introduction}
In this article, we consider the estimation of the unknown diffusion function $\sigma$ from the observed solution $u$ to the equation $Lu = \sigma(u) \dot{W}$, where $\dot{W}$ is a Gaussian noise and $L$ is a suitable parabolic differential operator. As a prototypical case, $L = \partial_t - \Delta$, corresponding to the stochastic heat equation. Stochastic partial differential equations serve as models for complex systems described by partial differential equations that are affected by randomness. These random elements can represent natural random fluctuations arising from the underlying phenomena itself or, for example, measurement errors. In our context, the ``size'' of these fluctuations depend non-linearly on the solution $u$ itself through the function $\sigma(u)$, and estimation of $\sigma$ then corresponds to assessing how large are the random fluctuations that affect the system.

Stochastic partial differential equations (SPDEs) have received a considerable amount of attention in the literature in recent years. To overcome the non-differentiability, one can understand the underlying equation $Lu = \sigma(u)\dot{W}$ via integration in the Dalang-Walsh sense~\cite{dalang1999extending,walsh1984intro}, in which case one can obtain existence and uniqueness of the so-called mild solutions, see~\cite{avelin2021existence,dalang1999extending,walsh1984intro}. While SPDEs are already well-studied, most of the literature focuses on theoretical properties. Studies related to statistical problems are more scarce. In particular, the estimation of coefficient functions have received attention only relatively recently. See~\cite{Cialenco2} for a survey on the topic. The case of constant $\sigma$ and parametric special form of $L$ is studied, e.g., in~\cite{Bossert,Tonaki}, in which (constant) parameters are estimated via spectral approach. For other approaches to the estimation of the constant diffusion coefficient $\sigma$, see~\cite{Bossert-Bibinger}. Estimation problems in the case of non-constant $\sigma$, either estimation of the quantities related to the diffusion $\sigma$ or other quantities related to $L$, are studied in~\cite{Altmeyer2,Almeyer,Bibinger,Cialenco}. However, in these cases $\sigma$ is allowed to be a function of $t$ only, i.e., $\sigma= \sigma(t)$. In this case the solution is Gaussian field as well, allowing for more detailed analysis. For a related literature, see also a recent article~\cite{Almeyer2} and references therein.

To the best of our knowledge, the present article is the first article studying non-parametric estimation of the $\sigma$ that depends on the solution $u$ itself. That is, we consider $\sigma = \sigma(u)$ and our aim is to estimate the unknown function $\sigma$ from observed solutions $u$. Our approach is based on estimating a novel non-parametric predictor, built using conditional expectations. The predictor is a conditional expectation of integrals (over $\varepsilon$-balls) of the discretized operator $L^h u$. We prove the consistency of our predictor and quantify the $L^p$-norm error in terms of the chosen parameters $\varepsilon$ and $h$. For the operator $L$, we adopt the setting of~\cite{avelin2021existence} and consider general parabolic operators $L$ with non-constant coefficients. Such operators admit smooth enough fundamental solutions that, in general, are non-symmetric. Fortunately, one can obtain heat-kernel type bounds for the fundamental solutions and their derivatives, allowing to deduce computations to the heat-kernel level. Our numerical experiments show good performance of our approach if the parameters $\varepsilon$ and $h$ are chosen appropriately.

The rest of the article is organized as follows. In \cref{sec:main} we present and discuss our main results. The estimation approach is examined numerically in \cref{sec:numerical}. Technical preliminaries and auxiliary bounds are presented in \cref{sec:preliminaries}, while the proofs of the main results are presented in \cref{sec:main-proof,sec:white-proof}. We end the paper with \cref{sec:additional_plots} containing additional discussion on numerical experiments.

\section{A consistent predictor of the diffusion function}
\label{sec:main}
Let $T>0$ be fixed. For $x\in\mathbb{R}^d$ and $t\in [0,T]$, we consider the equation
\begin{equation}\label{SPDE-1}
  \begin{cases}
    -Lu(x,t)+\sigma(u(x,t))\dot W(x,t)=0 \\
    u(\cdot,0)=u_0(x),
  \end{cases}
\end{equation}
where $\dot{W}$ is either a Gaussian space-time white noise and $d=1$, or $\dot{W}$ is a Gaussian field that is only white in time and the spatial correlations $\gamma(x-y)$ are given by the Riesz kernel $\gamma(x-y)=\Vert x-y\Vert^{-\beta}$ with $\beta<1$. We suppose that $u_0(x)$ is bounded and Lipschitz, and the coefficient
$\sigma: \mathbb{R}\rightarrow \mathbb{R}$ is a non-negative $M$-Lipschitz function. Finally, we assume that the operator $L$ is given by
\begin{equation}\label{operator}
  L:=\partial_t-\sum_{i,j=1}^d a_{ij}(x,t)\partial_{x_ix_j}-\sum_{i=1}^db_{i}(x,t)\partial_{x_i},
\end{equation}
where $a_{ij},b_i \in \mathcal{C}^{1,1}_b$, i.e.~both $a_{ij}$ and $b_i$ are bounded and  continuously differentiable in both $t$ and $x$ with bounded partial derivatives for $i,j=1,\ldots,d$, and the matrix $(a_{ij})$ is uniformly elliptic.
It is well-known, see, e.g.,~\cite{avelin2021existence,dalang1999extending}, that under these assumptions, \cref{SPDE-1} has a unique weak solution (in the sense of \cref{eq:solution-def}). In the spatially colored case we note that $\beta<\min(d,2)$ is sufficient for the existence and uniqueness of the solution. However, in our setting, as we consider the estimation of $\sigma$, we need the stronger requirement $\beta<1$, to obtain the convergence of the predictor.

A prototypical example of $L$ is
\[
  L = \partial_t - \frac{1}{2}\Delta
\]
corresponding to the Stochastic Heat Equation (SHE). More generally, our assumptions on the operator $L$ ensure that the fundamental solution (also called the Green kernel) associated with \cref{SPDE-1} is sufficiently smooth, and the fundamental solution and its derivatives can be bounded by the corresponding quantities of the heat kernel; cf.~\cref{lma:heat-bounds}.

Our aim is to estimate, non-parametrically, the coefficient function $\sigma$ that corresponds to the ``size of fluctuations'' in \cref{SPDE-1}. We do not assume any specific functional form for $\sigma$. However, due to the symmetry of Gaussian distribution, we can only recover $\sigma$ up to a sign, and hence we assume non-negativity for $\sigma$ to be identifiable.

Consider now constructing a predictor for the unknown $\sigma$ at a given point $u(x_0,t_0)$. Assume first that $\dot{W}$ is a smooth and nicely behaving function. Then, squaring \cref{SPDE-1} leads to
\[
  [(Lu)(x_0,t_0)]^2 = \sigma^2(u(x_0,t_0))[\dot{W}(x_0,t_0)]^2.
\]
Now taking conditional expectation given $u(x_0,t_0)$, this leads to
\[
  [(Lu)(x_0,t_0)]^2 = \sigma^2(u(x_0,t_0))\mathbb{E}[\dot{W}(x_0,t_0)]^2
\]
from which $\sigma^2$ at $u(x_0,t_0)$ can be recovered. However, in our generalized setting, $\dot{W}$ is not a smooth and nicely behaving function. It exists only as a generalized (random) function, and thus the pointwise expectation $\mathbb{E}[\dot{W}(x_0,t_0)]^2$ does not even exist. On top of that, non-smoothness of $\dot{W}$ implies that the solution $u$ itself is not smooth, and hence one cannot apply the operator $L$ directly. Our approach is to make the above computation rigorous by smoothening the problem. Precise heuristic derivations are given in \cref{subsec:heuristic}. We replace the operator $L$  with its finite difference approximation $L^h$ given by
\begin{align}\label{DefLh}
  L^h:f(x,t) \rightarrow & (\mathcal{D}_t^{h^2} f)(x,t)-\sum_{i,j=1}^d a_{ij}(x,t)(\mathcal D^{2,h}_{ij}f)(x,t)+\sum_{i=1}^d b_i(x,t)(\mathcal D^{1,h}_i f)(x,t),\notag \\
  =: & (\mathcal{D}_t^{h^2} f)(x,t)-(\mathcal{S}^h f)(x,t),
\end{align}
where
\begin{align*}
  (\mathcal{D}_t^{h^2} f)(x,t)= & \frac{f(x,t+h^2)-f(x,t)}{h^2}, \\
  (\mathcal D^{1,h}_{i} f)(x,t)= & \frac{f(x+he_i,t)-f(x,t)}{h}   \\
  (\mathcal D^{2,h}_{ij} f)(x,t)= & D^{1,h}_i D^{1,h}_j f(x,t)
\end{align*}
are finite difference approximations for the derivatives, and $e_i$ denote the standard basis vectors. The field $\dot{W}$ is smoothened
by integrating over the region $\mathcal{B}_{x_0,\varepsilon} \times [t_0,t_0+\varepsilon]$, where $\mathcal{B}_{x_0,\varepsilon}$ is the $d$-dimensional $x_0$-centered ball of radius $\varepsilon$ with volume
\begin{equation}\label{ballVolume}
  V(\varepsilon,d)=\frac{\pi^\frac{d}{2}}{\Gamma\left(\frac{d}{2}+1\right)}\varepsilon^d.
\end{equation}
This gives us the predictor
\begin{multline} \label{mainPredictor}
  \widehat{\sigma}^2_{\varepsilon,h}(u(x_0,t_0);x_0,t_0):=
  \\
  \mathbb{E}_{u(x_0,t_0)}\left [\left(\frac{1}{\widetilde{m}(\varepsilon,h)}\int\limits_{\mathcal{B}_{x_0,\varepsilon}}\int\limits_{t_0}^{t_0+\varepsilon}
  \left(L^hu(y,s)-\int\limits_{\mathbb{R}^d}L^h \Gamma(y,s;z,0)u_0(z)dz\right)dsdy\right)^2\right ].
\end{multline}
Here $\Gamma$ is the fundamental solution of the associated partial differential equation (cf.~\cref{subsec:PDE}), $\widetilde{m}(\varepsilon,h)$ is a suitable normalizing sequence, and $\mathbb{E}_{u(x_0,t_0)}$ is the conditional expectation given $u(x_0,t_0)$ that can then be estimated using data, cf. \cref{sec:numerical}. In our results, we choose $\varepsilon = h^{\varrho}$ for $\varrho\in(0,1)$. Set $I_h(t_0) = [t_0,t_0+\varepsilon+h^2]$ and $I_{h,r}(t_0) = [(r-h^2) \vee t_0, r\wedge (t_0+\varepsilon)]$.
We define
\begin{align} \label{eq:tilde_A}
  \widetilde{A}(z,r) := \iint\limits_{I_{h,r}(t_0) \times \mathcal{B}_{x_0,\varepsilon} } \Gamma(y,s+h^2;z,r) dy ds,
\end{align}
and the normalizing sequence in the spatially correlated case is defined by
\begin{align} \label{eq:m1}
  \widetilde{m}^2(\varepsilon,h) := m^2(h) := \frac{1}{h^4} \iiint\limits_{I_h(t_0) \times \mathbb{R}^{2d} } \widetilde{A}(z_1,r) \widetilde{A}(z_2,r)\gamma(z_1-z_2)dz_1 dz_2 dr,
\end{align}
where $\gamma(x) = \Vert x\Vert^{-\beta}$. Then we have $m(h) \sim h^{\varrho(2d-\beta+1)/2}$, see \cref{prop:A}.
Our main result is the following.
\begin{theorem}\label{main}
  Suppose $\dot{W}$ is white in time and spatial correlations are given by $\gamma(x-y) = \Vert x-y\Vert^{-\beta}$ for $\beta<1$. Let $h>0$, set $\varepsilon = h^{\varrho}$ for $\varrho = \frac{8}{12-\beta}$, and let $m(h)$ be given by \cref{eq:m1}. Then, for all $(x_0,t_0)\in\mathbb{R}^d\times [0,T]$, all $p\in [1,\infty)$, and for any $\kappa \in\left(0,\frac{2(2-\beta)}{12-\beta}\right)$, there exists a constant $K= K(M,p,\beta,d,\kappa)$ such that
  \begin{equation}\label{mainEq}
    \E \left [ \left \lvert \widehat{\sigma}^2_{\varepsilon,h}(u(x_0,t_0);x_0,t_0)-\sigma^2(u(x_0,t_0))\right \rvert^p \right ]^{\frac{1}{p}} \leq K h^{\kappa}.
  \end{equation}
\end{theorem}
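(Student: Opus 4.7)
The strategy is to expand the integrand in \cref{mainPredictor} using the mild representation of $u$, isolate a dominant stochastic-integral term whose squared conditional expectation equals $\sigma^{2}(u(x_0,t_0))$ by the design of $m(h)$, and control everything else in $L^p$. With the stochastic convolution $V(y,s):=\int_0^s\int\Gamma(y,s;z,r)\sigma(u(z,r))W(dz,dr)$, the expression inside the square in \cref{mainPredictor} equals $(m(h))^{-1}\iint_{\mathcal{B}_{x_0,\varepsilon}\times[t_0,t_0+\varepsilon]}L^hV\,dy\,ds$. Applying $L^h=\mathcal{D}^{h^2}_t-\mathcal{S}^h$ to $V$ and invoking stochastic Fubini leads to
\[
L^hV(y,s)=\tfrac{1}{h^2}\int_s^{s+h^2}\!\!\int\Gamma(y,s+h^2;z,r)\sigma(u(z,r))W(dz,dr)+\int_0^s\!\!\int L^h\Gamma(y,s;z,r)\sigma(u(z,r))W(dz,dr).
\]
After integrating over $(y,s)\in\mathcal{B}_{x_0,\varepsilon}\times[t_0,t_0+\varepsilon]$ and swapping order by Fubini, the first summand produces exactly the kernel $\widetilde{A}(z,r)/h^{2}$ of \cref{eq:tilde_A}; the integral of the second summand, denoted $R_{\mathcal{S}}$, is a genuine approximation-error term because $L\Gamma(y,s;z,r)=0$ for $s>r$, so $L^h\Gamma$ is small in $h$ by the bounds of \cref{lma:heat-bounds}. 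One therefore obtains
\[
Y:=\tfrac{1}{m(h)}\iint L^hV\,dy\,ds = \underbrace{\tfrac{1}{h^{2}m(h)}\iint\widetilde{A}(z,r)\sigma(u(z,r))W(dz,dr)}_{=:\,Y_{\mathrm{main}}}+R_{\mathcal{S}}.
\]

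\textbf{Independence of the main Gaussian piece.} Next, localize $Y_{\mathrm{main}}=\sigma(u(x_0,t_0))\widetilde{Z}+R_0$ where $\widetilde{Z}:=\tfrac{1}{h^{2}m(h)}\iint\widetilde{A}(z,r)W(dz,dr)$ and $R_0$ is the stochastic integral with integrand $\widetilde{A}(z,r)(\sigma(u(z,r))-\sigma(u(x_0,t_0)))/(h^{2}m(h))$. Since $\widetilde{A}(z,r)$ is supported on $r\in[t_0,t_0+\varepsilon+h^{2}]$, the Gaussian variable $\widetilde{Z}$ uses only noise strictly after $t_0$ and is therefore independent of the $\mathcal{F}_{t_0}$-measurable $u(x_0,t_0)$; by the very definition of $m(h)$ in \cref{eq:m1} it has variance precisely $1$. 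Hence $\E_{u(x_0,t_0)}[\widetilde{Z}^{2}]=1$, and setting $R_{\mathrm{tot}}:=R_0+R_{\mathcal{S}}$,
\[
\E_{u(x_0,t_0)}[Y^{2}]-\sigma^{2}(u(x_0,t_0))=2\sigma(u(x_0,t_0))\E_{u(x_0,t_0)}[\widetilde{Z}R_{\mathrm{tot}}]+\E_{u(x_0,t_0)}[R_{\mathrm{tot}}^{2}].
\]
Passing to $L^p$ norms, Jensen on the second term and Cauchy--Schwarz on the first (with all moments of $\widetilde{Z}$ bounded since it is a unit-variance Gaussian, and moments of $\sigma(u(x_0,t_0))$ controlled by the Lipschitz assumption and boundedness of $u_0$) reduce the proof of \cref{mainEq} to showing $\|R_{0}\|_{L^{2p}}+\|R_{\mathcal{S}}\|_{L^{2p}}\lesssim h^{\kappa/2}$.

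\textbf{Remainder bounds and the main obstacle.} For $R_0$, the Burkholder--Davis--Gundy inequality controls $\|R_0\|_{L^{2p}}^{2}$ by the $L^p$ norm of the quadratic variation
\[
\frac{1}{h^{4}m^{2}(h)}\iiint\widetilde{A}(z_1,r)\widetilde{A}(z_2,r)\bigl(\sigma(u(z_1,r))-\sigma(u(x_0,t_0))\bigr)\bigl(\sigma(u(z_2,r))-\sigma(u(x_0,t_0))\bigr)\gamma(z_1-z_2)\,dz_1 dz_2 dr.
\]
Combining the $M$-Lipschitz bound on $\sigma$ with the standard $L^{p}$-H\"older regularity of $u$ --- with an exponent $\alpha$ expected of order $(2-\beta)/4$ in time and $(2-\beta)/2$ in space, from the preliminary estimates in \cref{sec:preliminaries} --- and the spatio-temporal concentration of $\widetilde{A}$ on scales $\|z-x_0\|\lesssim\varepsilon$ and $|r-t_0|\lesssim\varepsilon$ (informed by \cref{prop:A}), one extracts an extra factor $\varepsilon^{2\alpha}$ relative to the normalisation $m^{2}(h)$, yielding $\|R_0\|_{L^{2p}}^{2}\lesssim h^{2\varrho\alpha}$. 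For $R_{\mathcal{S}}$, the finite-difference approximation error $|L^h\Gamma(y,s;z,r)|$ is bounded by a power of $h$ times the heat-kernel bound of \cref{lma:heat-bounds}, giving a competing rate in $h$. The principal difficulty will be exactly this quadratic-variation estimate: one must track simultaneously the Riesz covariance $\gamma(z_1-z_2)=\|z_1-z_2\|^{-\beta}$, the concentration of $\widetilde{A}$ near $(x_0,t_0)$, and the $L^p$-H\"older regularity of $u$, without losing sharpness by applying Cauchy--Schwarz prematurely to correlated factors. Balancing the two resulting rates over the single free parameter $\varrho$ selects $\varrho=8/(12-\beta)$ and produces the admissible range $\kappa<2(2-\beta)/(12-\beta)$ claimed in the theorem.
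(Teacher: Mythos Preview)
Your decomposition $Y=\sigma(u(x_0,t_0))\widetilde{Z}+R_0+R_{\mathcal{S}}$ coincides with the paper's splitting into the $A$-term and the $R$-term (your $R_{\mathcal S}$ is exactly the paper's $\iint R_{y,s,h}$, and your $R_0$ is what produces the paper's $\omega^1(h)$). The independence/variance-$1$ argument for $\widetilde{Z}$ is a clean way to phrase what the paper does via conditional isometry. The BDG treatment you sketch for $R_0$ is also essentially what the paper carries out in its \cref{prop:A}.

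The genuine gap is in your handling of $R_{\mathcal S}$. Writing ``$|L^h\Gamma(y,s;z,r)|$ is bounded by a power of $h$ times the heat-kernel bound'' hides the issue that this bound contains extra negative powers of $(s-r)$: Taylor expansion around $L\Gamma=0$ gives
\[
|L^h\Gamma(y,s;z,r)|\;\lesssim\;\Bigl(\tfrac{h^2}{(s-r)^{2}}+\tfrac{h}{(s-r)^{3/2}}\Bigr)\,\Gamma_+(y-z;s-r),
\]
which is \emph{not} small when $s-r\lesssim h^{2}$. After squaring and integrating against $\gamma$, the resulting $(s-r)$-integral diverges at the diagonal, so a direct bound of the quadratic variation fails. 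The paper deals with this by introducing a second free parameter $\kappa\in(\varrho,1)$ and splitting the $s$-integration into a ``far'' piece $s>r+h^{2\kappa}$, where the Taylor bound is usable and yields rate $h^{2-2\kappa}$, and a ``near'' piece $s\in[r,r+h^{2\kappa}]$, where one must instead integrate by parts over $\mathcal{B}_{x_0,\varepsilon}$ and exploit the short time window, producing rate $h^{(2-\beta)(\kappa-\varrho)}$; this is exactly the content of \cref{Bound1,Bound2} and is the bulk of the work. Consequently there are \emph{three} competing rates ($\varrho\nu$, $2-2\kappa$, $(2-\beta)(\kappa-\varrho)$), not two, and the optimisation is over $(\varrho,\kappa,\nu)$ jointly; only after that does $\varrho=8/(12-\beta)$ emerge.

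A related misjudgement: you locate ``the principal difficulty'' in the quadratic-variation estimate for $R_0$, but in the paper that term (their $\omega^1(h)$) is disposed of in a few lines, while $R_{\mathcal S}$ occupies three separate propositions. Your outline would go through once you insert the $\kappa$-splitting for $R_{\mathcal S}$ and carry out the boundary/near-diagonal estimate; without it the argument is incomplete.
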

\begin{remark}
  We remark that the constant $K$ depends only on $M,p,\beta,d,\kappa$ and the fundamental solution $\Gamma$, and not on, e.g., $h$ and $\varepsilon$. The constant $K$ grows without a limit as $\kappa$ approaches $\frac{2(2-\beta)}{12-\beta}$. We also remark that one can obtain the rate for any $\varepsilon = h^\varrho$ with arbitrary $\varrho \in(0,1)$, see \cref{prop:A,prop:R}. The above formulation follows from optimizing the choice of $\varrho$.
\end{remark}

In the space-time white noise case with $d=1$, the normalizing sequence is given by
\begin{align} \label{eq:m2}
  \widetilde{m}^2(\varepsilon,h) := \hat{m}^2(h) := \frac{1}{h^4} \iiint\limits_{I_h(t_0) \times \mathbb{R} } \widetilde{A}^2(z,r) dz dr
\end{align}
that now satisfies $\hat{m}(h) \sim h^{\varrho}$. In this case we obtain the following.
\begin{theorem}
  \label{main2}
  Suppose $\dot{W}$ is white in time and in space. Let $h>0$, set $\varepsilon = h^{\varrho}$ for $\varrho = \nicefrac{8}{9}$, and let $\hat{m}(h)$ be given by \cref{eq:m2}. Then, for all $(x_0,t_0)\in\mathbb{R}\times [0,T]$, all $p\in [1,\infty)$, and for any $\kappa \in\left(0,\nicefrac{2}{9}\right)$, there exists a constant $K= K(M,p,\kappa)$ such that
  \begin{equation}\label{mainEq2}
    \E \left [ \left \lvert \widehat{\sigma}^2_{\varepsilon,h}(u(x_0,t_0);x_0,t_0)-\sigma^2(u(x_0,t_0))\right \rvert^p \right ]^{\frac{1}{p}}
    \leq K h^{\kappa}.
  \end{equation}
\end{theorem}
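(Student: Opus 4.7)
The plan is to mirror the strategy used for \cref{main}, with the simplification that the spatial covariance of the noise is a Dirac delta rather than a Riesz kernel. The first step is to use the mild formulation
\[
  u(y,s) = \int_{\R}\Gamma(y,s;z,0)u_0(z)\,dz + \int_0^s\!\!\int_{\R}\Gamma(y,s;z,r)\sigma(u(z,r))\,W(dz,dr),
\]
so that subtracting $\int L^h\Gamma(y,s;z,0)u_0(z)\,dz$ inside \cref{mainPredictor} leaves only the finite difference $L^hV$ of the stochastic convolution. Expanding $\mathcal D_t^{h^2}V$ as a difference of two Walsh integrals, I would split $L^hV(y,s)$ into a \emph{past} contribution, involving $L^h_{y,s}\Gamma(y,s;z,r)$ integrated against the noise on $[0,s]\times\R$, and a \emph{local slab} contribution, involving $\Gamma(y,s+h^2;z,r)$ integrated against the noise on $[s,s+h^2]\times\R$. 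Averaging over $(y,s)\in\mathcal B_{x_0,\varepsilon}\times[t_0,t_0+\varepsilon]$ and using stochastic Fubini turns the local part into
\[
  R_1 := \frac{1}{h^2}\iint_{I_h(t_0)\times\R}\widetilde A(z,r)\sigma(u(z,r))\,W(dz,dr),
\]
while the past part becomes a remainder $R_2$ governed by the pointwise bounds on $L^h\Gamma$ supplied by \cref{lma:heat-bounds}.

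Next I would write $R_1 = \sigma(u(x_0,t_0))\hat m(h)G + R_1^{\mathrm e}$, where
\[
  G := \frac{1}{\hat m(h)\,h^2}\iint_{I_h(t_0)\times\R}\widetilde A(z,r)\,W(dz,dr)
\]
is a centered Gaussian of variance one (by It\^o isometry with Lebesgue spatial covariance, using precisely the definition of $\hat m^2(h)$ in \cref{eq:m2}), and $R_1^{\mathrm e}$ carries the difference $\sigma(u(z,r))-\sigma(u(x_0,t_0))$. Since $G$ is a functional of the noise on $I_h(t_0)\subset(t_0,\infty)$ while $u(x_0,t_0)$ is $\mathcal F_{t_0}$-measurable, $G$ is independent of $u(x_0,t_0)$ with $\E[G^2]=1$. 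Expanding $(R_1+R_2)^2/\hat m^2(h)$ and taking $\E_{u(x_0,t_0)}$ therefore yields $\sigma^2(u(x_0,t_0))$ exactly, plus cross and quadratic error terms; Cauchy--Schwarz and conditional Jensen reduce the bound in \cref{mainEq2} to showing that $E := (R_1^{\mathrm e} + R_2)/\hat m(h)$ satisfies $\|E\|_{2p} \lesssim h^\kappa$, noting that $\|\sigma(u(x_0,t_0))\|_{2p}$ is controlled by the standard moment estimates on the mild solution.

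The heart of the argument is this last estimate, and the main obstacle is balancing two competing error rates. For $R_1^{\mathrm e}/\hat m(h)$, Burkholder--Davis--Gundy for Walsh integrals combined with the Lipschitzness of $\sigma$ reduces matters to an integral of $|u(z,r)-u(x_0,t_0)|^2$ weighted by $\widetilde A^2$; since the effective support of $\widetilde A$ lies in $\mathcal B_{x_0,\varepsilon+h}\times[t_0,t_0+\varepsilon+h^2]$, the standard $L^p$-H\"older regularity of the white-noise solution (spatial exponent $\nicefrac12{-}$, temporal exponent $\nicefrac14{-}$) yields a bound of order $\varepsilon^{\nicefrac14 - \delta} = h^{\varrho/4 - \varrho\delta}$. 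For $R_2/\hat m(h)$, the identity $L\Gamma=0$ away from the singularity turns $L^h\Gamma$ into a finite-difference truncation error, and after inserting the heat-kernel bounds and the scaling $\hat m(h)\sim h^\varrho$ from \cref{prop:A,prop:R} one extracts a matching power of $h$. The choice $\varrho=\nicefrac89$ is precisely the one that equates these two rates, and the constraint $\kappa<\nicefrac29$ reflects the integrability threshold of $\widetilde A^2$ near $r=t_0$. Arbitrary $p\in[1,\infty)$ is then handled by using the higher-moment version of BDG together with the moment bounds on $u$.
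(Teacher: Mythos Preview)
Your overall plan parallels the paper's: the same local-slab versus past decomposition of $L^hV$, the same stochastic Fubini leading to the kernel $\widetilde A$, and the same use of H\"older regularity of $u$ for the ``freezing'' error. The twist---freezing $\sigma(u(z,r))\to\sigma(u(x_0,t_0))$ inside the local-slab Walsh integral \emph{before} squaring, so that the main term is literally $\sigma^2(u(x_0,t_0))\,\E_{u(x_0,t_0)}[G^2]=\sigma^2(u(x_0,t_0))$---is a clean alternative to the paper's conditional-isometry route (which produces the analogous $\omega^1$ error term). Both lead to the same $h^{\varrho/4-}$ bound for the ``freezing'' piece.

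There is, however, a genuine gap in how you handle the past term $R_2$. You say ``$L\Gamma=0$ away from the singularity turns $L^h\Gamma$ into a finite-difference truncation error, and after inserting the heat-kernel bounds \ldots\ one extracts a matching power of $h$''. This hides the entire technical core of the argument. The Taylor bound on $L^h\Gamma$ is only useful when $s-r$ is bounded away from $0$; near the diagonal the derivatives of $\Gamma$ blow up and one needs a separate boundary-type estimate. The paper handles this by introducing an auxiliary cutoff $h^{2\kappa}$ (their \cref{Bound1,Bound2}): for $s-r\ge h^{2\kappa}$ the Taylor remainder gives a contribution of order $h^{2-2\kappa}$ (in $\|\cdot\|_{2p}^2$), while for $0<s-r<h^{2\kappa}$ a direct estimate gives $h^{2\kappa-\varrho}$. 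Without this splitting, and without specifying the auxiliary $\kappa$, you have no control over $R_2$ at all.

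There is also a subtler structural issue. By bounding the cross-term $2\sigma G E$ via Cauchy--Schwarz, you make the past contribution enter the final error \emph{linearly} through $\|R_2/\hat m\|_{2p}$. The paper instead argues that the conditional cross-term between the local slab and the past vanishes, so only $\|R/\hat m\|_{2p}^2$ enters, and the paper's parameter choice $\kappa\approx\varrho$ is tuned to that quadratic appearance. With your linear appearance the same choice would only give $\|R_2/\hat m\|_{2p}\lesssim h^{1/9}$, spoiling the rate. Your approach can still be salvaged---the white-noise near-past bound actually only needs $\kappa>\varrho/2$, and choosing the auxiliary $\kappa$ near $(2+\varrho)/4=13/18$ yields $\|R_2/\hat m\|_{2p}\lesssim h^{5/18}>h^{2/9}$---but you have not indicated any of this, and your assertion that ``$\varrho=8/9$ equates these two rates'' and that ``$\kappa<2/9$ reflects the integrability threshold of $\widetilde A^2$'' does not match the actual mechanism. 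The constraint $\kappa<2/9$ comes from the balance between the H\"older exponent $\varrho\nu$ (with $\nu<1/4$) and the past-term rate, not from any integrability issue for $\widetilde A$.
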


\section{Numerical experiments} \label{sec:numerical}

In this section, we present some numerical experiments to illustrate the performance of our predictor. For our experiments, we consider the SHE driven by space-time white noise. This is a well-studied model in the field of stochastic partial differential equations. The SHE we consider is given by
\begin{equation} \label{eq:SHE}
  \begin{aligned}
    \partial_t u(x,t) & = \frac{1}{2} \Delta u(x,t) + \sigma(u(x,t)) \xi(x,t), \\
    u(x,0)            & = 6,                                                   \\
    u(0,t)            & = u(L,t) = 0,
  \end{aligned}
\end{equation}
where $\xi$ is a space-time white noise. The considered functions $\sigma$ are listed in \cref{tab:sigmas}. The bases for choosing these functions is to see how the size and the shape of the function $\sigma$ affect the performance of our predictor. The main prototype form for $\sigma$ is $\sigma_3$, which is symmetric around $x=2$, has a corner at $x=2$, two smooth peaks, and very flat regions where the values are small, see \cref{fig:sigma_example}. The other functions are variations of this function with different amplitudes and oscillations. Functions $\sigma^2_4$ and $\sigma^2_5$ have the
same Lipschitz constant as $\sigma_3^2$.
Function $\sigma_6$ has small values and small oscillations. Finally, $\sigma_7$  has small values and large oscillations.

\begin{table}[htbp]
  \centering
  \caption{Functions $\sigma$ that are used in the simulated examples.}
  \begin{tabular}{c|c}
    $\sigma$                                                     & Characteristics                           \\
    \hline
    $\sigma_1(x)  = 0.1$                                         & small constant function                   \\
    $\sigma_2(x) = 2$                                            & large constant function                   \\
    $\sigma_3(x) = \frac{1}{8} \exp(\sin(4|x-2|)) $              & nonlinear function and small values       \\
    $\sigma_4(x)  = \frac{1}{32} \exp(\sin(4|x-2|)) + 1 $        & large values and small Lipschitz constant \\
    $\sigma_5(x) = \frac{1}{8} \exp(\sin(\frac{4}{5}|x-2|)) + 1$ & large values and small Lipschitz constant \\
    $\sigma_6(x)  = \frac{1}{8} \exp(\sin(\frac{4}{10}|x-2|))$   & small values and small oscillations       \\
    $\sigma_7(x)  = \frac{1}{8} \exp(\sin(13|x-2|))$             & small values and large oscillations
  \end{tabular}
  \label{tab:sigmas}
\end{table}
Our theoretical result, see \cref{main}, covers equations on the whole space. However, for computational reasons, our simulations are restricted  to bounded domains with zero Dirichlet boundary conditions.

\subsection{Numerical approach}
We use the following numerical approach to solve \cref{eq:SHE}. First, we discretize the spatial domain $[0,L]$ with points $N_x$, which implies a mesh size of $\Delta x = \nicefrac{L}{N_x}$. We then use a standard finite difference scheme from~\cite{gyongy1998latticeI,gyongy1999latticeII} to approximate \cref{SPDE-1}. This leads to a system of $N_x$ coupled SDEs, $\mathbf{u}(t) := [u_1(t),\ldots,u_{N_x}(t)]$, defined by
\begin{equation} \label{eq:SDE}
  \begin{aligned} 
    d \mathbf{u}(t) & = A \mathbf{u}(t)dt + \Sigma(\mathbf{u}(t)) dW_t, \\
    u_i(0)          & = 6,                                              \\
    u_1(t)          & = u_{N_x}(t) = 0,
  \end{aligned}
\end{equation}
where $W_t$ is a standard Wiener process in $\R^{N_x}$, $A$ is a tri-diagonal matrix of the form
\begin{equation}
  A = \frac{N_x^2}{2}
  \begin{pmatrix}
    0      & 0      & 0      & \cdots & 0      & 0      & 0      \\
    1      & -2     & 1      & \cdots & 0      & 0      & 0      \\
    0      & 1      & -2     & \cdots & 0      & 0      & 0      \\
    \vdots & \vdots & \vdots & \ddots & \vdots & \vdots & \vdots \\
    0      & 0      & 0      & \cdots & 1      & -2     & 1      \\
    0      & 0      & 0      & \cdots & 0      & 0      & 0
  \end{pmatrix},
\end{equation}
and where $\Sigma(\mathbf{u}(t))$ is a diagonal matrix with first and last diagonal elements equal to zero and the rest of the diagonal elements are given by $(\Sigma(\mathbf{u}(t)))_{ii}=\sigma(u_i(t)) \sqrt{N_x}$. To obtain a numerical solution to \cref{eq:SDE}, we discretize the time interval $[0,T]$ with $N_t$ points and simulate using the classical Euler-Maruyama method. For details on the spatial approximation and convergence rates of our numerical scheme, we refer to~\cite{Anton,gyongy1999latticeII}.

In our experiments, we use the parameter values $L=1$, $T=1$, $N_x = 2^9$, and $N_t = N_x^2 = 2^{18}$. The resulting numerical solution $\mathbf{u}(t)$ is taken as the underlying true solution to the SHE in \cref{eq:SHE}.

\begin{figure}[ht]
  \centering
  \begin{tikzpicture}
    \begin{axis}[
        width=0.7\textwidth,
        height=0.4\textwidth,
        domain=0:4,
        samples=400,
        xlabel={$x$},
        ylabel={$y$},
        ymin=0, ymax=0.15,
        yticklabel style={/pgf/number format/fixed, /pgf/number format/precision=2},
        thin
      ]
      \addplot[black, thin] {1/64 * exp(2*sin((57.2957795131)*4*abs(x-2)))};
    \end{axis}
  \end{tikzpicture}
  \caption{Plot of function $\sigma_3^2$.}
  \label{fig:sigma_example}
\end{figure}
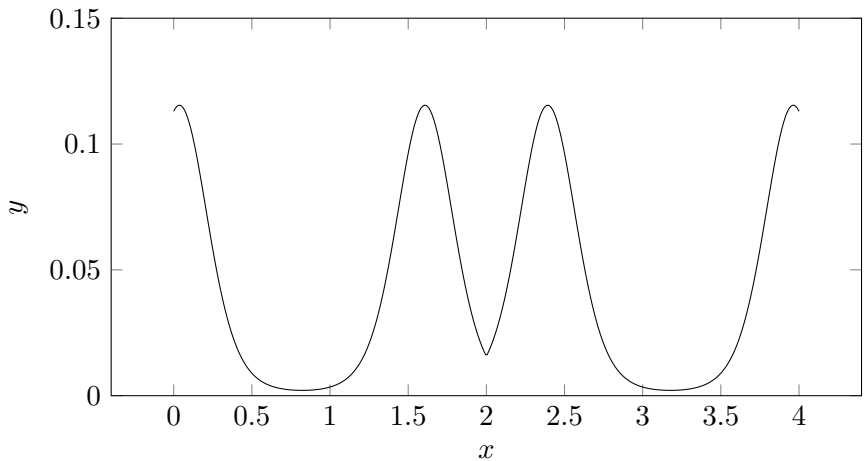

\subsection{Generation of the data-set and calculation of the integral}
In our experiments, we consider the predictor \cref{mainPredictor} with various combinations $h \in \{\nicefrac{2}{N_x}, \nicefrac{4}{N_x}, \nicefrac{8}{N_x}\}$ and $\varepsilon \in \{h, 2h, 4h, 8h\}$.

We next describe a discrete approximation $\widetilde{\sigma}^2_{\varepsilon,h}(x_0,t_0)$ of \cref{mainPredictor} satisfying
\begin{equation} \label{eq:regression_function}
  \widehat{\sigma}^2_{\varepsilon,h}(u(x_0,t_0);x_0,t_0)
  \approx \E_{u(x_0,t_0)}[\widetilde{\sigma}^2_{\varepsilon,h}(x_0,t_0)].
\end{equation}
For this, let $i_0,j_0$ be the discrete coordinates of $(x_0,t_0)$, i.e.~integers $i_0 = \nicefrac{N_x x_0}{L}$ and $j_0 = \nicefrac{N_t t_0}{T}$. With given $h$ and $\varepsilon$, the window $W = [x_0 - \varepsilon,x_0 + \varepsilon] \times [t_0 , t_0 + \varepsilon]$ corresponding to the mesh
\begin{equation}
  \widehat{W} = \{(i,j) \in \mathbb{Z}^2 : |i-i_0| \leq \nicefrac{N_x\varepsilon}{L}, 0 \leq j-j_0 \leq \nicefrac{N_t\varepsilon}{T}\}.
\end{equation}
Then the first part of \cref{mainPredictor} is based on the approximation
\begin{equation} \label{eq:discrete_integral}
  \frac{1}{m(\varepsilon)} \int\limits_{B_\varepsilon} \int\limits_{t_0}^{t_0+\varepsilon} L^h u (x,t) dx dt \approx \frac{1}{\sqrt{2}\varepsilon} \frac{TL}{N_x N_t} \sum_{(i,j) \in \widehat{W}} (L^h \mathbf{u})_i(t_j),
\end{equation}
where $L^h \mathbf{u}(t)$ is defined as
\begin{equation}
  (L^h \mathbf{u})_i(t_j) := \frac{u_i(t_{j+h^2 N_t/T}) - u_i(t_j)}{h^2} - \frac{u_{i-h N_x/L}(t_j) - 2u_i(t_j) + u_{i+h N_x/L}(t)}{h^2}.
\end{equation}
Note that the above quantities are well-defined for all points $(x_0,t_0)$ in the discretized domain $[0,L] \times [0,T]$ that are far enough from the boundaries.
For computing the term $\int\limits_{\mathbb{R}^d}L^h \Gamma(y,s;z,0)u_0(z)dz$ in \cref{mainPredictor}, we calculate
$\hat{\mathbf{u}}$ as the solution to \cref{eq:SDE} with zero noise using the same steps as above. This leads to
\begin{equation} \label{eq:discrete_sigma}
  \widetilde{\sigma}^2_{\varepsilon,h}(x_0,t_0) :=  \left [ \frac{1}{\sqrt{2}\varepsilon} \frac{TL}{N_x N_t} \sum_{(i,j) \in \widehat{W}} ((L^h \mathbf{u})_i(t_j)-(L^h \hat{\mathbf{u}})_i(t_j)) \right ]^2.
\end{equation}

In our experiments, for each used combination of $h$ and $\varepsilon$, we produce 100 realizations of the process $\mathbf{u}$. Each realization has 10000 points $(x_0,t_0)$ in the discretized domain $[0,L] \times [0,T]$ (for which the expression~\cref{eq:discrete_sigma} is defined).

\subsection{Estimation of $\widehat{\sigma}^2_{\varepsilon,h}$} 
Motivated by \cref{eq:regression_function}, we estimate the conditional expectation \\
$\E_{u(x_0,t_0)} [\widetilde{\sigma}^2_{\varepsilon,h}(x_0,t_0)]$ using kernel regression on the simulated data-set. The bandwidth of the kernel is chosen to be the same for all combinations of $h$ and $\varepsilon$. The results of the regression are shown in \cref{fig:plot_sigma_function_1,fig:plot_sigma_function_2,fig:grid_sigma_functions,fig:plot_sigma_functions_matrix,fig:grid_sigma_functions_epsilon_order} for each of the functions $\sigma_1,\ldots,\sigma_7$. Functions $\sigma_1$ and $\sigma_2$ were experimented  with $h= \nicefrac{2}{N_x}$ and $\varepsilon \in \{h, 2h, 4h\}$. The findings are displayed in \cref{fig:plot_sigma_function_1,fig:plot_sigma_function_2}. The functions $\sigma_4,\ldots,\sigma_7$ were experimented with $h= \nicefrac{2}{N_x}$. The used $\varepsilon$ with the results are displayed in  \cref{fig:plot_sigma_functions_matrix}. The prototypical function $\sigma_3$ was tested with all combinations of $h \in \{\nicefrac{2}{N_x}, \nicefrac{4}{N_x}, \nicefrac{8}{N_x}\}$ and $\varepsilon \in \{h, 2h, 4h, 8h\}$. The results are shown in \cref{fig:grid_sigma_functions,fig:grid_sigma_functions_epsilon_order}. \cref{fig:grid_sigma_functions_epsilon_order} displays same plots as \cref{fig:grid_sigma_functions}. The plots are reordered such that each row corresponds to a fixed value of $\varepsilon$ and each column corresponds to a fixed value of $h$. This makes it easier to assess the effect of $h$ for a fixed $\varepsilon$ and the effect of $\varepsilon$ for a fixed $h$. Additional analysis for $\sigma_3$ is provided in \cref{sec:additional_plots}.

The light blue regions in \cref{fig:plot_sigma_function_1,fig:plot_sigma_function_2,fig:grid_sigma_functions,fig:plot_sigma_functions_matrix,fig:grid_sigma_functions_epsilon_order} are the approximate 95\% prediction intervals. This illustrates the uncertainty of the regression and its relation to how large values the function $\sigma^2$ attains. The red dashed line is the true function $\sigma^2$ and the blue line is the estimated function $\hat{\sigma}^2$. The gray scattered points are simulated observations.
Note that most of the simulated values of $u$ are concentrated around zero. While this is expected, as the solution to the heat equation decays rapidly, it makes the estimation challenging for large values.
The starting point of each numerical solution is 6, but as $u$ decreases rapidly, there are only very few simulated observations near the initial value 6. The plots are thus presented only for $u\leq 4$.

\begin{figure}[ht]
  \centering
  \begin{minipage}{0.32\textwidth}
    \centering
    \includegraphics[width=\textwidth]{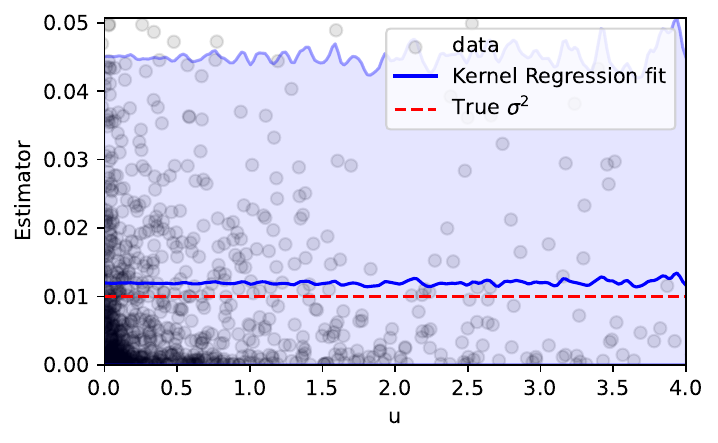}
    $\varepsilon = h$, $h = \nicefrac{2}{N_x}$
  \end{minipage}
  \begin{minipage}{0.32\textwidth}
    \centering
    \includegraphics[width=\textwidth]{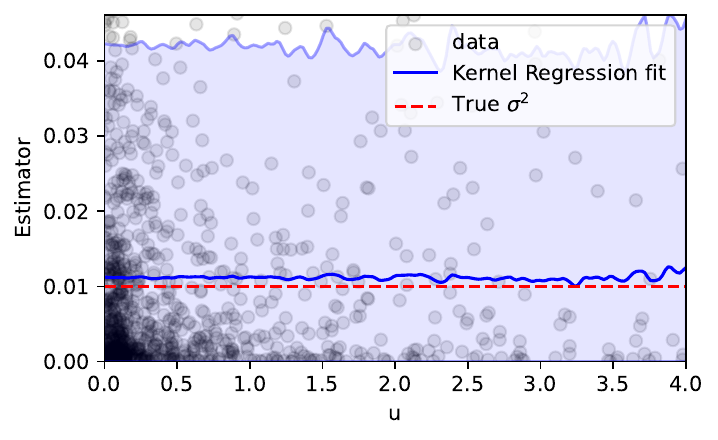}
    $\varepsilon = 2h$, $h = \nicefrac{2}{N_x}$
  \end{minipage}
  \begin{minipage}{0.32\textwidth}
    \centering
    \includegraphics[width=\textwidth]{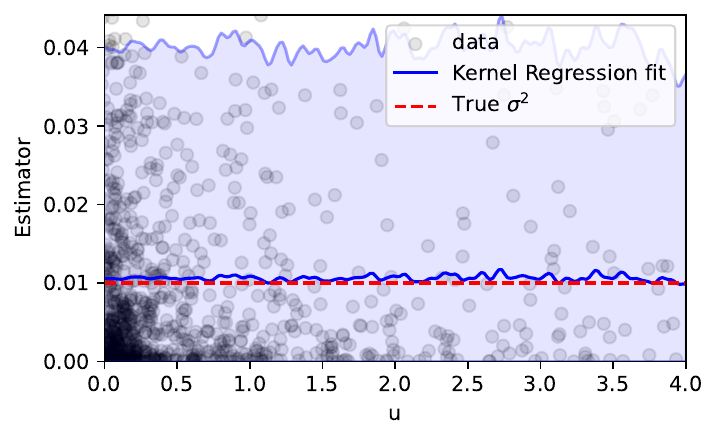}
    $\varepsilon = 4h$, $h = \nicefrac{2}{N_x}$
  \end{minipage}
  \caption{Results for function $\sigma^2_1$ for different combinations of $h$ and $\varepsilon$.}
  \label{fig:plot_sigma_function_1}
\end{figure}

\begin{figure}[ht]
  \centering
  \begin{minipage}{0.32\textwidth}
    \centering
    \includegraphics[width=\textwidth]{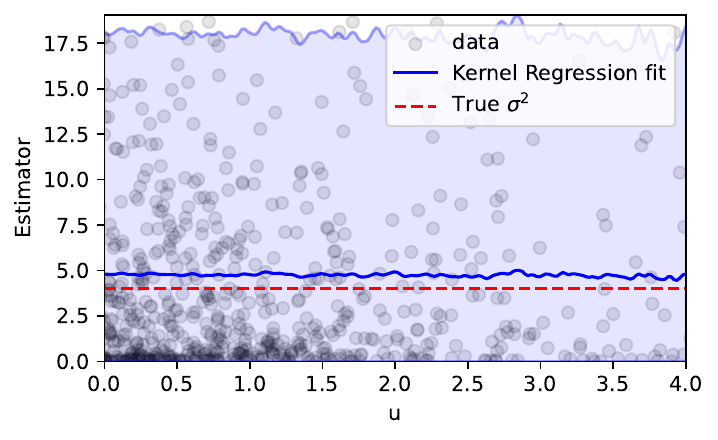}
    $\varepsilon = h$, $h = \nicefrac{2}{N_x}$
  \end{minipage}
  \begin{minipage}{0.32\textwidth}
    \centering
    \includegraphics[width=\textwidth]{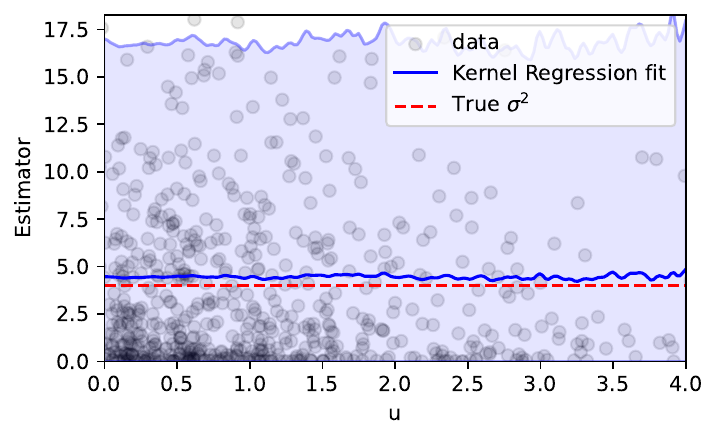}
    $\varepsilon = 2h$, $h = \nicefrac{2}{N_x}$
  \end{minipage}
  \begin{minipage}{0.32\textwidth}
    \centering
    \includegraphics[width=\textwidth]{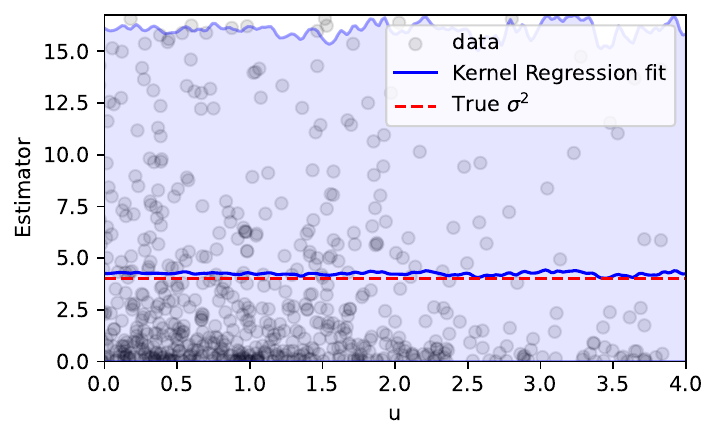}
    $\varepsilon = 4h$, $h = \nicefrac{2}{N_x}$
  \end{minipage}
  \caption{Results for function $\sigma^2_2$ for different combinations of $h$ and $\varepsilon$.}
  \label{fig:plot_sigma_function_2}
\end{figure}

\begin{table}[ht]
  \centering
  \begin{tabular}{c|ccc}
    $\varepsilon \backslash h$ & $\nicefrac{2}{N_x}$ & $\nicefrac{4}{N_x}$ & $\nicefrac{8}{N_x}$ \\ 
    \hline
    $h$ & \includegraphicspage[0.29\textwidth]{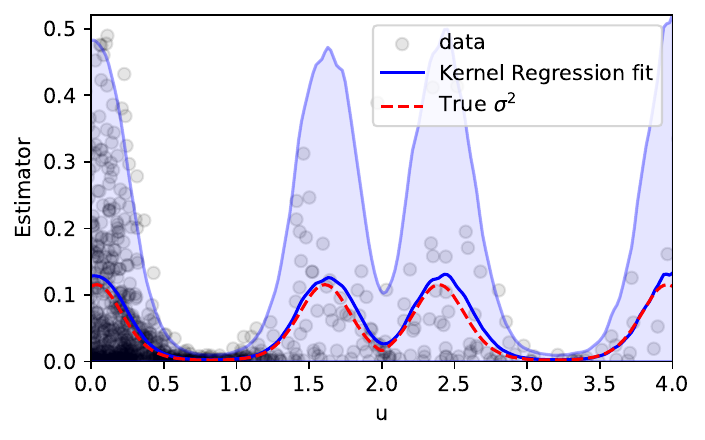} & \includegraphicspage[0.29\textwidth]{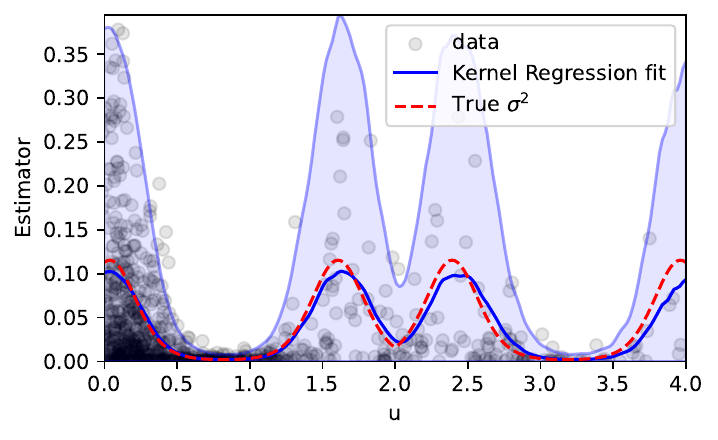} & \includegraphicspage[0.29\textwidth]{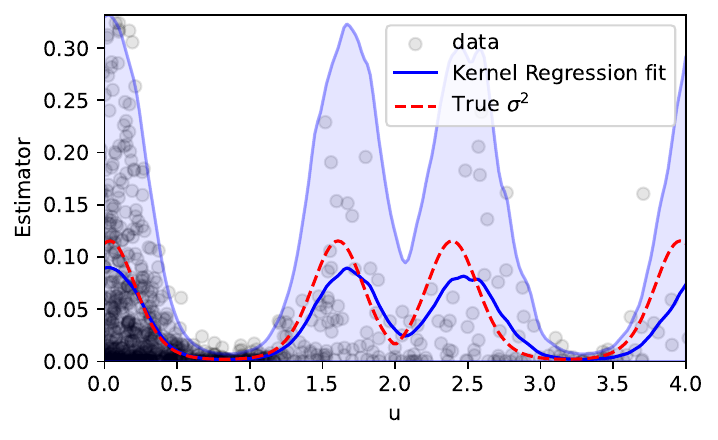} \\
    $2h$ & \includegraphicspage[0.29\textwidth]{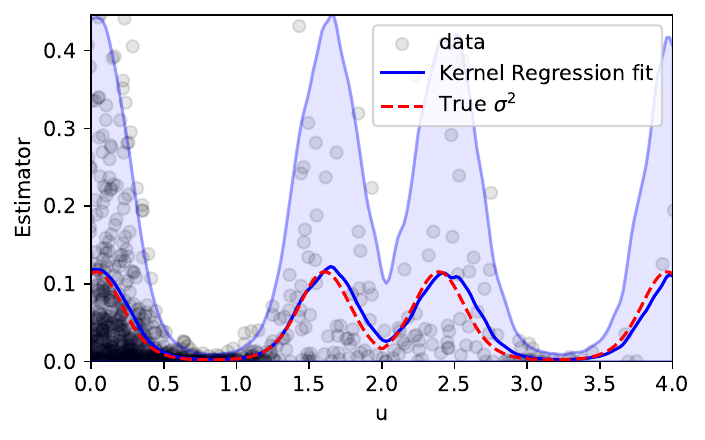} & \includegraphicspage[0.29\textwidth]{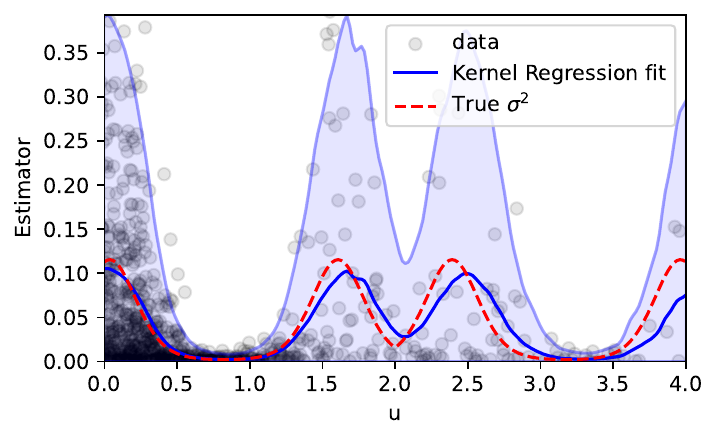} & \includegraphicspage[0.29\textwidth]{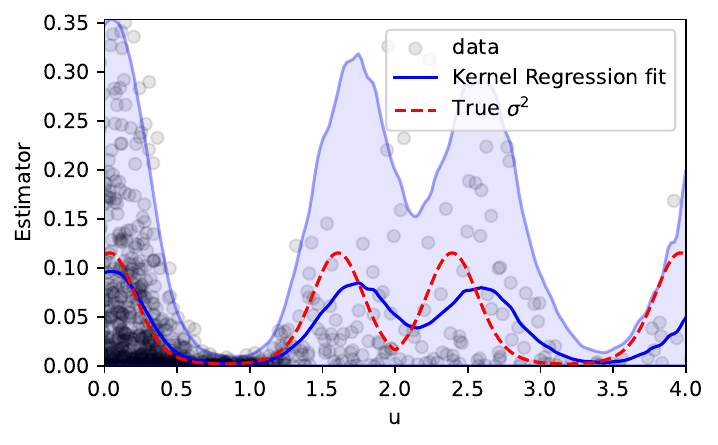} \\
    $4h$ & \includegraphicspage[0.29\textwidth]{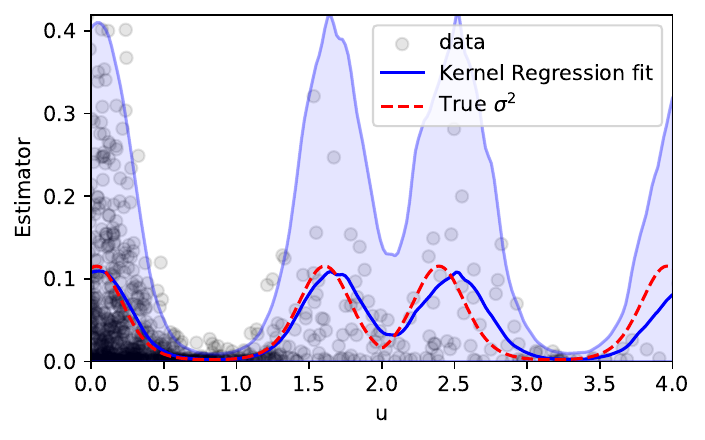} & \includegraphicspage[0.29\textwidth]{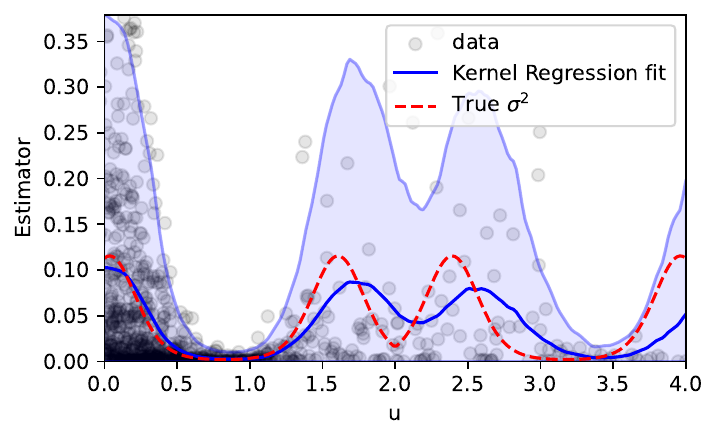} & \includegraphicspage[0.29\textwidth]{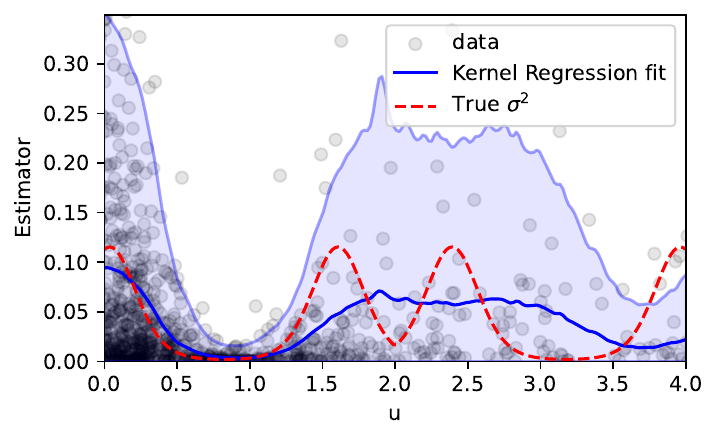} \\
    $8h$ & \includegraphicspage[0.29\textwidth]{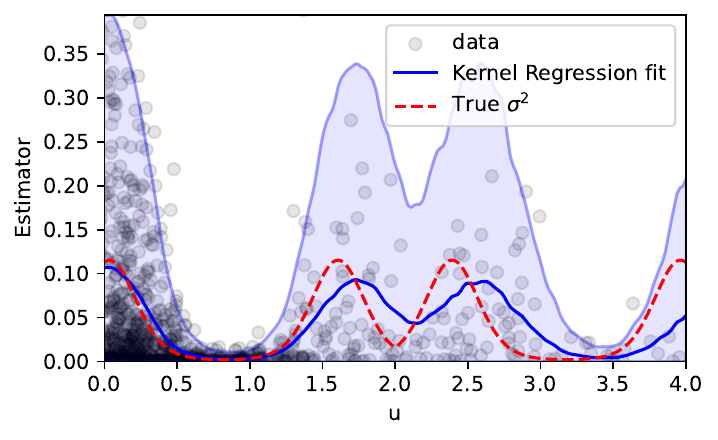} & \includegraphicspage[0.29\textwidth]{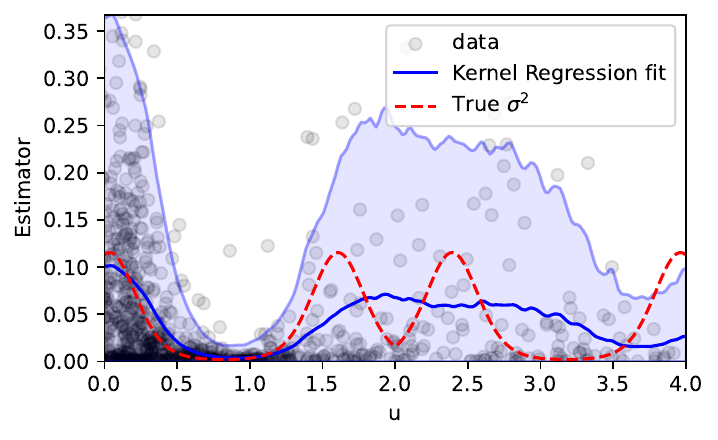} & \includegraphicspage[0.29\textwidth]{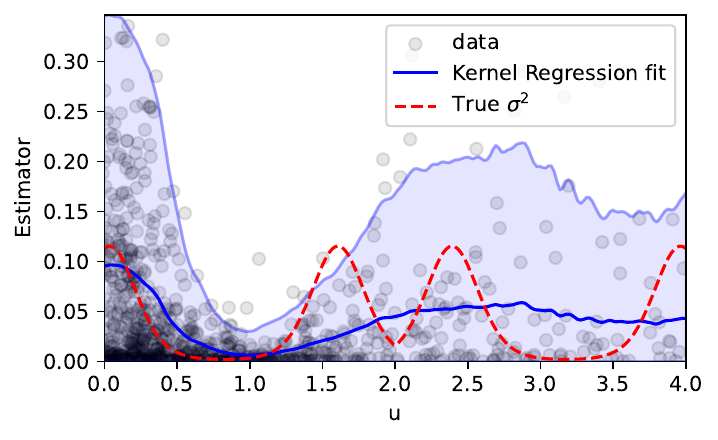} \\
  \end{tabular}
  \caption{Results for function $\sigma^2_3$ for different combinations of $h$ and $\varepsilon$.}
  \label{fig:grid_sigma_functions}
\end{table}

\begin{table}[ht]
  \centering
  \begin{tabular}{c|ccc}
    $\varepsilon \backslash h$ & $\nicefrac{2}{N_x}$ & $\nicefrac{4}{N_x}$ & $\nicefrac{8}{N_x}$ \\ 
    \hline
    $\nicefrac{4}{N_x}$ & \includegraphicspage[0.29\textwidth]{images/plot_sigma_function_3_2_2.pdf} & \includegraphicspage[0.29\textwidth]{images/plot_sigma_function_3_1_4.pdf} & \includegraphicspage[0.29\textwidth]{images/plot_sigma_function_3_1_8.pdf} \\
    $\nicefrac{8}{N_x}$ & \includegraphicspage[0.29\textwidth]{images/plot_sigma_function_3_4_2.pdf} & \includegraphicspage[0.29\textwidth]{images/plot_sigma_function_3_2_4.pdf} & \includegraphicspage[0.29\textwidth]{images/plot_sigma_function_3_1_8.pdf} \\
    $\nicefrac{16}{N_x}$ & \includegraphicspage[0.29\textwidth]{images/plot_sigma_function_3_8_2.pdf} & \includegraphicspage[0.29\textwidth]{images/plot_sigma_function_3_4_4.pdf} & \includegraphicspage[0.29\textwidth]{images/plot_sigma_function_3_2_8.pdf} \\
    $\nicefrac{32}{N_x}$ & & \includegraphicspage[0.29\textwidth]{images/plot_sigma_function_3_8_4.pdf} & \includegraphicspage[0.29\textwidth]{images/plot_sigma_function_3_4_8.pdf} \\
  \end{tabular}
  \caption{Results for function $\sigma^2_3$ for fixed values of $\varepsilon$ (rows) and for fixed values of $h$ (columns).}
  \label{fig:grid_sigma_functions_epsilon_order}
\end{table}

\begin{table}[ht]
  \centering
  \begin{tabular}{c|ccc}
    $\varepsilon \backslash h$ & $\nicefrac{2}{N_x}$ & $\nicefrac{4}{N_x}$ & $\nicefrac{8}{N_x}$ \\ \hline
    $\nicefrac{4}{N_x}$ & \boxed{0.0070}  & 0.0076 & \\
    $\nicefrac{8}{N_x}$ & \boxed{0.0127}  & 0.0129 & 0.0142\\
    $\nicefrac{16}{N_x}$ & 0.0206 & \boxed{0.0205}  & 0.0207 \\
    $\nicefrac{32}{N_x}$ &        & \boxed{0.0282}  & 0.0288
  \end{tabular}
  \caption{The $L^1$-errors (over values $0\leq u\leq 4$) for different values of $\varepsilon$ and $h$ in the case of $\sigma^2_3$. The smallest values of each row are highlighted.}
  \label{tab:l1_errors}
\end{table}

\begin{figure}[ht]
  \centering
  \begin{minipage}{0.45\textwidth}
    \centering
    \includegraphics[width=\textwidth]{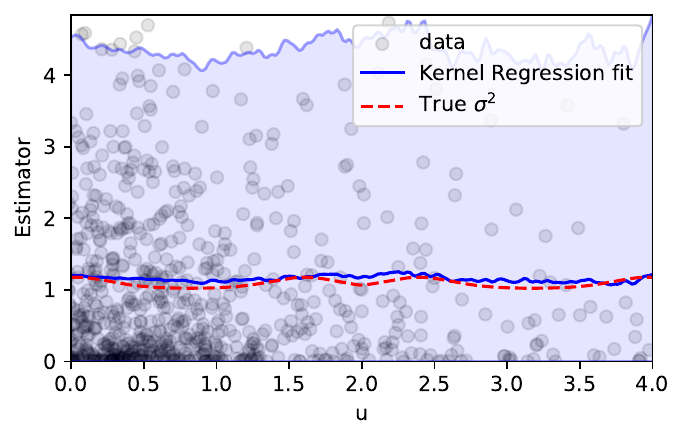}
    $\sigma_4(x)$, $\varepsilon = 4h$, $h = \nicefrac{2}{N_x}$
  \end{minipage}
  \begin{minipage}{0.45\textwidth}
    \centering
    \includegraphics[width=\textwidth]{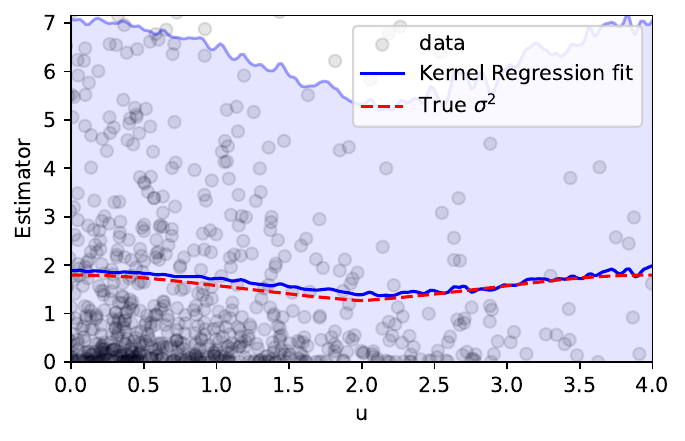}
    $\sigma_5(x)$, $\varepsilon = 4h$, $h = \nicefrac{2}{N_x}$
  \end{minipage}

  \vspace{0.5cm}

  \begin{minipage}{0.45\textwidth}
    \centering
    \includegraphics[width=\textwidth]{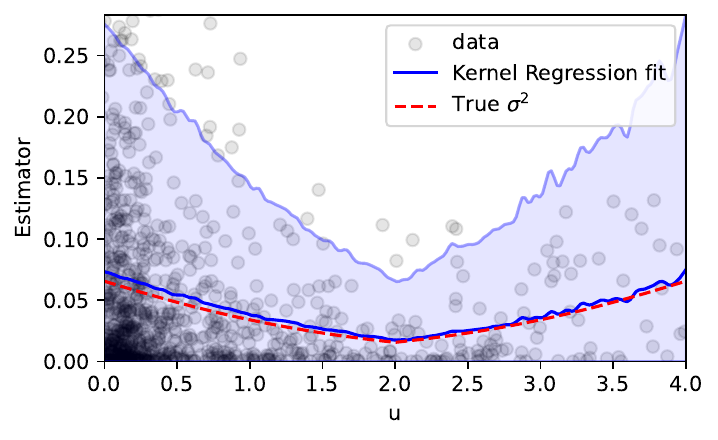}
    $\sigma_6(x)$, $\varepsilon = 2h$, $h = \nicefrac{2}{N_x}$
  \end{minipage}
  \begin{minipage}{0.45\textwidth}
    \centering
    \includegraphics[width=\textwidth]{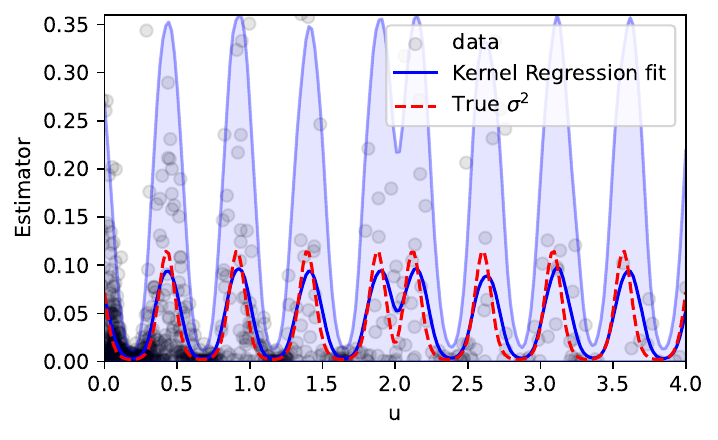}
    $\sigma_7(x)$, $\varepsilon = h$, $h = \nicefrac{2}{N_x}$
  \end{minipage}

  \caption{Results for functions $\sigma^2_4,\ldots,\sigma^2_7$ for different combinations of $h$ and $\varepsilon$.}
  \label{fig:plot_sigma_functions_matrix}
\end{figure}

\clearpage
\subsection{Discussion}

Our numerical experiments reveal several interesting aspects of the predictor's behavior.

In the case of constant $\sigma$, the bias seems to be always positive. Moreover, in this case,
increasing $\varepsilon$ leads to bias reduction,  see \cref{fig:plot_sigma_function_1,fig:plot_sigma_function_2}. Explanation for this can be seen by considering the decomposition of the predictor~\cref{mainPredictor}, see~\cref{eq:A,eq:R}. In particular, the term $R_{y,s,h}$ gives rise to a positive bias. Moreover, by examining our proof, we observe that some error terms have a scale $(h/\varepsilon) \sigma^2(u(x_0,t_0))$ that decreases as $\varepsilon$ increases. The bias proportional to $(h/\varepsilon) \sigma^2(u(x_0,t_0))$ can be seen from \cref{fig:plot_sigma_function_1,fig:plot_sigma_function_2} as the relative bias (with respect to $\sigma^2$) is similar for $\sigma_1$ and $\sigma_2$.
As $\varepsilon$ increases, we also observe a decrease in the width of the 95\% prediction intervals (note that the scale of the $y$-axis changes). This indicates that the predictor becomes more stable.

In the case of prototypical non-constant function $\sigma_3$, the effect of varying $h$ while keeping $\varepsilon = k h$ fixed is displayed in \cref{fig:grid_sigma_functions}. Within each column, $h$ is a constant and $\varepsilon$ increases down the rows. The variance seems to decrease as $\varepsilon$ increases. For each row, the ratio $\nicefrac{h}{\varepsilon}$ is fixed. By examining the plots within each row, we notice that the variance seems to decrease as $h$ increases. On the other hand, we observe that as $\varepsilon$ increases, the peaks of $\sigma_3^2$ are smoothened and shifted to the right. This might be explained by the strong diffusivity of the heat equation, that could also explain why the effect seems to be weaker closer to zero, since at zero the solution is essentially just noise. This can be compared to the ``ballistic'' versus ``diffusive'' regimes of the classical Ornstein-Uhlenbeck process. Further discussion of this phenomenon can be found in \cref{sec:additional_plots}. \cref{fig:grid_sigma_functions_epsilon_order} illustrates the effect of fixing $\varepsilon$ and increasing $h$ within each row. This behavior closely resembles the case where $h$ is fixed and $\varepsilon$ increases, suggesting that both parameters play similar roles in the estimation problem. In \cref{tab:l1_errors} we see the effect of varying $\varepsilon$ and $h$ on the integrated $L^1$ error given by
\[
  \int\limits_0^4 \left \lvert \mathbb{E}_u\left [ \widetilde{\sigma}^2_{\varepsilon,h} \right ] - \sigma_3^2(u) \right \rvert du.
\]
It is clear that for fixed $\varepsilon$, it is optimal to choose $h$ as small as possible. Similarly, for fixed $h$, one should choose $\varepsilon$ as small as possible.

The behavior of $\sigma_7$ in \cref{fig:plot_sigma_functions_matrix} showcases an example where biases to different directions may lead to a fairly accurate estimate.
There both $\varepsilon$ and $h$ are small. This could lead to a large variance and positive bias as in \cref{fig:plot_sigma_function_1,fig:plot_sigma_function_2}. However, for a highly oscillating $\sigma$, the smoothing effect seems to be strong enough to compensate this overestimation.

\section{Preliminary bounds and auxiliary results}\label{sec:preliminaries}

In this section, we present the preliminary bounds and auxiliary results needed for the proof of \cref{main}. The proof utilizes several auxiliary results and technical lemmas that are also provided in this section. In the sequel, $c$, $C$, and $K$ denote unimportant constants that may or may not depend on a set of parameters, and that may vary from line to line.

\subsection{White-colored noise and Walsh integrals}\label{sec:prl}
In this section we review the necessary preliminaries on stochastic integration related to \cref{SPDE-1}. For details and further reading, we refer to~\cite{dalang1999extending,walsh1984intro}.

For a given covariance kernel $\gamma$, we define the inner product for $\varphi,\psi\in\mathcal C_c^{\infty}(\mathbb{R}^d\times\mathbb{R}_+)$ through the formula
\begin{equation}
  \label{eq:covariance}
  \langle \varphi,\psi\rangle_{\gamma}:=\int\limits_{\mathbb{R}_+}\int\limits_{\mathbb{R}^d}\int\limits_{\mathbb{R}^d}\varphi(x,s)\psi(y,s)\gamma(y-x)dydxds,
\end{equation}
where (with a slight abuse of notation)
\[
  \int\limits_{\mathbb{R}^d}\int\limits_{\mathbb{R}^d}\varphi(x,s)\psi(y,s)\gamma(y-x)dydx:=\int\limits_{\mathbb{R}^d}\varphi(y,s)\left(\psi(s,\cdot)*\gamma\right)(y)dy,
\]
and
$\ast$ denotes the convolution in the space variable. In our setting, we have $\gamma(x) = \Vert x\Vert^{-\beta}$ or $\gamma(x)=\delta(x)$ (the dirac delta).

Consider the centered Gaussian family indexed by $\mathcal C_c^{\infty}(\mathbb{R}^d\times\mathbb{R}_+)$ through the covariance
\[
  \mathbb{E}[W(\varphi)W(\psi)]=\langle\varphi,\psi\rangle_{\gamma}.
\]
Now, by a standard isometry argument, it is possible to extend the family $W$ to the closure $\mathcal H$ of $\mathcal C_c^{\infty}(\mathbb{R}^d\times\mathbb{R}_+)$ with respect to $\langle,\rangle_{\gamma}$. This allows us to define the noise $\dot{W}$ having (formally) the covariance structure
\[
  \mathbb{E}[\dot{W}(x,s)\dot{W}(y,t)]=\delta(t-s)\gamma(x-y).
\]

For all $t\geq 0$, let $\mathscr{F}_t$ be the $\sigma$-field generated by the family $\left\{W(\varphi)\right\}$, where $\varphi$ has its support in $\mathbb{R}^d \times [0,t]$. Let $\{X(y,s), (y,s)\in\mathbb{R}^d \times \R_+\}$ be a jointly measurable process, adapted with respect to $\mathscr{F}_t$, satisfying
\[
  \mathbb{E}[\|X\|^2_{\gamma}]<\infty.
\]
Now, the integral
\[
  \int\limits_0^T\int\limits_{\mathbb{R}^d}X(y,s)W(dy,ds)
\]
is well-defined and satisfies the isometry
\begin{equation}
  \mathbb{E}\left [\left(\int\limits_{0}^{\infty}\int\limits_{\mathbb{R}^d}X(y,s)W(dy,ds)\right)^2\right ] = \int\limits_0^T\int\limits_{\mathbb{R}^d}\int\limits_{\mathbb{R}^d}\mathbb{E}[X(x,s)X(y,s)]\gamma(x-y)dxdyds. \label{Isometry}
\end{equation}
Moreover, the process $t\mapsto\int\limits_0^t\int\limits_{\mathbb{R}^d}X(y,s)W(dy,ds)$ is also adapted. Since $t\rightarrow W_{t}(\varphi )$ is a martingale for every $\varphi \in\mathcal B_b(\mathbb{R}^d)$ (bounded Borel measurable functions), the following known results are obtained.
\begin{lemma}
  \label{lemma:conditional}
  Let $s<t$ and let $X$ be an adapted square integrable process. Then, for every square integrable $F \in \mathscr{F}_s$, we have
  \begin{align}
     & \mathbb{E}\left [F\int\limits_s^t\int\limits_{\mathbb{R}^d}X(y,u)W(dy,du)\right ] = \mathbb{E}\left [F\left.\int\limits_s^t\int\limits_{\mathbb{R}^d}X(y,u)W(dy,du)\right|\mathscr{F}_s\right ]=0.\label{conditionalExp}
  \end{align}
  Moreover, we have
  \begin{align*}
    \mathbb{E}\left [\left.\left(\int\limits_{0}^t\int\limits_{\mathbb{R}^d}X(y,u)W(dy,du)\right)^2\right|\mathscr{F}_s\right ] = \int\limits_s^t\int\limits_{\mathbb{R}^d}\int\limits_{\mathbb{R}^d}\mathbb{E}[X(z_1,u)X(z_2,u)|\mathscr{F}_s]\gamma(z_1-z_2)dz_1dz_2du,
  \end{align*}
  and, for every $p\geq 1$, there exists $C_p>0$ such that
  \begin{align}
    \mathbb{E}\left [\left|\int\limits_s^t\int\limits_{\mathbb{R}^d}X(y,u)W(dy,du)\right|^p\right ] \leq C_p \left|\int\limits_{s}^t\int\limits_{\mathbb{R}^d}\int\limits_{\mathbb{R}^d}\mathbb{E}[|X(z_1,u)X(z_2,u)|^{\frac{p}{2}}]^{\frac{2}{p}}\gamma(z_1-z_2)dz_1dz_2du\right|^\frac{p}{2}\label{eq:BDG}.
  \end{align}
\end{lemma}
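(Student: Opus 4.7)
All three statements follow standard arguments for Walsh-type stochastic integrals; I describe the strategy rather than routine calculations.

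For \cref{conditionalExp}, the key fact is that $M_t := \int_0^t\int_{\mathbb{R}^d} X(y,u)\,W(dy,du)$ is an $\mathscr{F}_t$-martingale. For elementary integrands $X = Y\mathbf{1}_A\mathbf{1}_{(a,b]}$ with bounded $\mathscr{F}_a$-measurable $Y$, this follows from the Gaussianity of $W(A\times(a,b])$ and its independence of $\mathscr{F}_a$. The martingale property extends to admissible $X$ by the isometry \cref{Isometry} and density of simple processes in $\mathcal{H}$. Both expectations in \cref{conditionalExp} then collapse to $\mathbb{E}[F\,\mathbb{E}[\int_s^t X\,dW|\mathscr{F}_s]] = 0$. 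For the conditional isometry, I again reduce to simple integrands, where a direct computation using Gaussianity yields $\mathbb{E}[W(A_1\times(a,b])W(A_2\times(a,b])|\mathscr{F}_a] = (b-a)\int_{A_1}\int_{A_2}\gamma(x-y)\,dx\,dy$. Bilinearity and density transfer the formula to general $X$, and the orthogonal decomposition $\int_0^t = \int_0^s + \int_s^t$ combined with Part 1 identifies the conditional second moment of $\int_s^t X\,dW$ with the stated right-hand side.

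For the Burkholder-Davis-Gundy bound \cref{eq:BDG}, I view $M$ as a continuous $L^2$-martingale with quadratic variation $\langle M\rangle_t = \int_0^t\iint_{\mathbb{R}^{2d}} X(z_1,u)X(z_2,u)\gamma(z_1-z_2)\,dz_1\,dz_2\,du$, obtained from the isometry of Part 2. Classical BDG then gives $\mathbb{E}|M_t - M_s|^p \leq C_p\,\mathbb{E}|\langle M\rangle_t - \langle M\rangle_s|^{p/2}$. For $p\geq 2$, Minkowski's integral inequality with exponent $p/2\geq 1$, applied to the positive measure $\gamma(z_1-z_2)\,dz_1\,dz_2\,du$ on $[s,t]\times\mathbb{R}^{2d}$, exchanges the $L^{p/2}(\Omega)$-norm with the triple integral and yields \cref{eq:BDG}. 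For $p\in[1,2)$, Jensen's inequality $\mathbb{E}|M_t-M_s|^p \leq (\mathbb{E}|M_t-M_s|^2)^{p/2}$ followed by the isometry gives the same bound up to constants.

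The only delicate point is justifying the Minkowski exchange against the singular Riesz kernel $\gamma(x) = \|x\|^{-\beta}$; since $\gamma$ is a nonnegative (possibly singular) kernel and the admissibility hypothesis $\mathbb{E}\|X\|_\gamma^2 < \infty$ controls finiteness throughout, this is routine. I do not expect any genuine obstacle: the proof is a textbook reduction to simple integrands followed by a density argument in $\mathcal{H}$, coupled with the classical continuous-time BDG inequality for the associated quadratic variation.
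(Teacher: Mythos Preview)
The paper does not actually prove this lemma: it is stated as a known result, with the only justification being the sentence ``Since $t\rightarrow W_{t}(\varphi)$ is a martingale for every $\varphi \in\mathcal B_b(\mathbb{R}^d)$, the following known results are obtained'' and the remark that \cref{eq:BDG} is the Burkholder-Davis-Gundy inequality. Your sketch is the standard argument underlying these facts (martingale property for simple integrands extended by density, conditional isometry via the quadratic variation, and classical BDG combined with Minkowski for $p\ge 2$ and Jensen for $p<2$), and it is correct; in that sense you have supplied what the paper omits rather than taken a different route.
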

In the literature, \cref{eq:BDG} is called the Burkholder-Davis-Gundy inequality.

\subsection{Auxiliary estimates for SPDEs and for the fundamental solution}
\label{subsec:PDE}
Consider the equation
\begin{equation}\label{SPDE}
  Lu(x,t)=\sigma(u(x,t))\dot{W}(x,t)
\end{equation}
with initial condition $u_0(0,x)=u_0$, where $L$ is a second order (linear) differential operator
and $W$ is a white-colored noise. Let $\Gamma$ be the fundamental solution of the associated PDE $Lu=0$. Then, assuming that $\sigma$ is Lipschitz continuous and that $\Gamma$ verifies
\[
  \sup_{x\in \mathbb{R}^d} \int\limits_{\mathbb{R}^d}\Gamma(x,t;\xi,\tau) d\xi < C_T, \quad 0 \leq \tau < t \leq T,
\]
as well as the condition
\[
  \int\limits_{\mathbb{R}^d}\frac{\|\xi\|^{\beta-d}}{(1+f(\|\xi\|))}d\xi<\infty,
\]
where $f$ is a function which satisfies $|\Gamma(x,t;\xi,\tau)|\leq \mathcal{F}^{-1}(e^{-|t-\tau|f})(x-\xi)$, with $\mathcal{F}^{-1}$ denoting the inverse Fourier transform. It is proved in~\cite{dalang1999extending} (as a particular case of Theorem 13) and~\cite{avelin2021existence} that the equation \cref{SPDE} admits a unique (weak) solution satisfying
\begin{equation}
  \label{eq:solution-def}
  u(x,t)=\int\limits_{\mathbb{R}^d}\Gamma(x,t;y,0)u_0(y)dy+\int\limits_0^t\int\limits_{\mathbb{R}^d}\Gamma(x,t;y,s)\sigma(u(y,s))W(dy,ds),
\end{equation}
where the second integral is to be understood in the sense defined above. It was shown in~\cite{avelin2021existence} that under our assumptions, the two above conditions are verified with $f(x)=\|x\|^2$ leading to the standard assumption $\beta<\min\{2,d\}$.
In the particular case of the \textit{stochastic heat equation}, we have
\begin{equation}\label{SHEsolution}
  u(x,t)=\int\limits_{\mathbb{R}^d}p_t(x-y)u_0(y)dy+\int\limits_0^t\int\limits_{\mathbb{R}^d}p_{t-s}(x-y)\sigma(u(y,s))W(dy,ds),
\end{equation}
where $p$ is the \textit{heat kernel}
\begin{equation}
  p_t(x):=\frac{1}{(\sqrt{2\pi t})^d}e^{-\frac{\|x\|_2^2}{2t}}.
\end{equation}

We recall the following two lemmas for estimating the fundamental solution $\Gamma$ and its derivatives.
\begin{lemma}{\cite[p. 253]{friedman2008partial}} \label{lem:Ia}
  Let
  \[
    I_a = \int\limits_{\R^d} \frac{1}{[(t-s)(s-\tau)]^{d/2}} \exp \left [ -a \frac{|x-\eta|^2}{t-s} - a\frac{|\eta-\xi|^2}{s-\tau} \right ] d\eta,
  \]
  where $\tau < s < t$ and $a > 0$.
  Then, for any $\varepsilon > 0$, there is a constant $M$ such that
  \[
    I_a \leq \frac{M}{(t-\tau)^{d/2}} \exp \left [ -a(1-\varepsilon) \frac{|x-\xi|^2}{t-\tau} \right ].
  \]
\end{lemma}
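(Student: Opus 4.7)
The plan is to evaluate $I_a$ directly by completing the square in the integration variable $\eta$; the resulting identity is the classical Chapman--Kolmogorov relation for Gaussian densities, which in fact gives a bound sharper than stated. The slack $\varepsilon>0$ is a convenient looseness for later applications, but it is not needed for the proof itself.

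First I would introduce the shorthand $u:=t-s$ and $v:=s-\tau$, so that $u+v=t-\tau$. The exponent in $I_a$ then reads
$$-a\left[\frac{|x-\eta|^2}{u}+\frac{|\eta-\xi|^2}{v}\right] = -\frac{a}{uv}\bigl(v|x-\eta|^2+u|\eta-\xi|^2\bigr).$$
Expanding the numerator as a quadratic in $\eta$ and completing the square gives the key algebraic identity
$$\frac{|x-\eta|^2}{u}+\frac{|\eta-\xi|^2}{v} = \frac{u+v}{uv}\,|\eta-\eta^{*}|^2 + \frac{|x-\xi|^2}{u+v},\qquad \eta^{*}:=\frac{vx+u\xi}{u+v}.$$
The verification reduces to the single identity $(u+v)(v|x|^2+u|\xi|^2)-|vx+u\xi|^2=uv|x-\xi|^2$, a short direct expansion.

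With this factorisation in place, the integral separates and becomes
$$I_a = \frac{1}{(uv)^{d/2}}\exp\!\left[-\frac{a|x-\xi|^2}{u+v}\right]\int_{\R^{d}}\exp\!\left[-\frac{a(u+v)}{uv}|\eta-\eta^{*}|^2\right]d\eta.$$
The remaining Gaussian integral equals $(\pi uv/(a(u+v)))^{d/2}$, so after cancellation I obtain
$$I_a = \left(\frac{\pi}{a}\right)^{d/2}\frac{1}{(t-\tau)^{d/2}}\exp\!\left[-\frac{a|x-\xi|^2}{t-\tau}\right],$$
which yields the claimed bound with $M=(\pi/a)^{d/2}$ (and in fact allows one to take $\varepsilon=0$).

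No step here is a genuine obstacle: the content of the lemma is the explicit Gaussian convolution identity, and the whole argument amounts to careful bookkeeping in completing the square. The only pitfall is the algebraic step identifying the residual exponent as $|x-\xi|^2/(t-\tau)$, where a sign or indexing error would propagate directly to the final constant.
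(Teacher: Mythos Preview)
Your proof is correct and in fact yields the exact identity $I_a=(\pi/a)^{d/2}(t-\tau)^{-d/2}\exp[-a|x-\xi|^2/(t-\tau)]$, which is stronger than the stated inequality; the paper itself does not prove this lemma but simply cites it from Friedman. Your approach---completing the square to recover the Chapman--Kolmogorov convolution identity for Gaussians---is the standard one, and your observation that the slack $\varepsilon>0$ is superfluous for the lemma as stated (with the \emph{same} constant $a$ in both exponentials) is accurate.
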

\begin{lemma}
  \label{lma:heat-bounds}
  Assume that the derivatives $D^{\boldsymbol{\alpha}}_x a_{ij}(x,t)$ and $D^{\boldsymbol{\alpha}}_x b_j(x,t)$, for multi-indices $0 \leq |{\boldsymbol{\alpha}}| \leq r$, where $r$ is a positive integer, exist and are bounded continuous functions of $(x,t)$ in $\R^d \times [0,T]$.
  Let $\Gamma(x,t;\xi,\tau)$ be the fundamental solution to $L$ given by \cref{operator}. Then for any multi-index $\mathbf{m}$ with $|\mathbf{m}|\leq 2$ we have
  \begin{align*}
    |D^{\mathbf{m}+{\boldsymbol{\alpha}}}_x \Gamma(x,t;\xi,\tau)| \leq \frac{C}{(t-\tau)^{(d+|{\boldsymbol{\alpha}}|+|\mathbf{m}|)/2}} \exp \left [ - C \frac{|x-\xi|^2}{t-\tau} \right ],
  \end{align*}
  for a constant $C$.
  If $\partial^k_t a_{ij}$ and $\partial^k_t b_j$ exist, and are bounded and continuous functions of $(x,t)$ in $\R^d \times [0,T]$, then
  \begin{align*}
    |\partial^k_t \Gamma(x,t;\xi,\tau)| \leq \frac{C}{(t-\tau)^{(d+2k)/2}} \exp \left [ - C \frac{|x-\xi|^2}{t-\tau} \right ].
  \end{align*}
\end{lemma}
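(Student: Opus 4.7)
The approach I would take is Levi's classical parametrix method (as developed in Friedman's book, which is already the source of \cref{lem:Ia}). The idea is to reduce the construction and estimation of $\Gamma$ to the corresponding constant-coefficient heat kernel through an iterative correction. Define the parametrix $Z(x,t;\xi,\tau)$ as the fundamental solution of the operator obtained by freezing the coefficients $a_{ij}$ and $b_i$ at the pole $(\xi,\tau)$. Since the frozen operator is a constant-coefficient second order parabolic operator with uniformly elliptic matrix, $Z$ is an explicit Gaussian density in $x-\xi$ with covariance matrix proportional to $(t-\tau)A(\xi,\tau)$, and all its spatial and temporal derivatives obey Gaussian bounds of the desired form by direct differentiation.

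Next I would seek $\Gamma$ in the form $\Gamma = Z + Z \star \Phi$, where $\star$ denotes space-time convolution on $\R^d \times (\tau,t)$, and $\Phi$ is to be determined. Applying $L$ and using $L_{\xi,\tau} Z = 0$ together with the jump relation as $t \downarrow \tau$, one obtains the Volterra integral equation $\Phi = -LZ + (-LZ) \star \Phi$, where $LZ = (L - L_{\xi,\tau}) Z$ involves the coefficient differences $a_{ij}(x,t) - a_{ij}(\xi,\tau)$ and $b_i(x,t) - b_i(\xi,\tau)$. Under the $\mathcal{C}^{1,1}_b$ assumption on the coefficients these differences contribute a factor of order $|x-\xi| + (t-\tau)^{1/2}$ which, when combined with the Gaussian decay of the differentiated parametrix, yields the improved bound $|LZ(x,t;\xi,\tau)| \leq C(t-\tau)^{-(d+1)/2} e^{-c|x-\xi|^2/(t-\tau)}$. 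I would then solve the Volterra equation by Neumann iteration $\Phi = \sum_{k\geq 1} (-LZ)^{\star k}$; convergence follows from repeated application of \cref{lem:Ia} and the Beta-function identity $\int_\tau^t (t-s)^{-1/2}(s-\tau)^{-1/2} ds = B(\tfrac12,\tfrac12)$, producing a Gaussian tail. This already establishes the $|\mathbf{m}|=0$, $|\boldsymbol\alpha|=0$ case.

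For the higher spatial derivative bounds I would differentiate $\Gamma = Z + Z\star\Phi$ under the integral sign. The bound on $D^{\mathbf{m}+\boldsymbol\alpha}_x Z$ is immediate from the Gaussian form of the parametrix, contributing the expected factor $(t-\tau)^{-(d+|\mathbf{m}|+|\boldsymbol\alpha|)/2}$. For the correction term $Z\star\Phi$, I would distribute the derivatives so that at most one order of differentiation falls on $\Phi$ near the singular endpoint $s=t$, while the remaining derivatives act on $Z$; here the regularity of the coefficients (through the assumption on $D^{\boldsymbol\alpha}_x a_{ij}$, $D^{\boldsymbol\alpha}_x b_j$ up to order $r$) transfers to $\Phi$ via the Volterra series, and \cref{lem:Ia} absorbs the resulting Gaussian factors in the convolution. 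For the time-derivative bounds $|\partial_t^k \Gamma|$, rather than differentiating directly under the integral, I would use the PDE $\partial_t \Gamma = \sum_{i,j} a_{ij} \partial_{x_i x_j}\Gamma + \sum_i b_i \partial_{x_i}\Gamma$ (valid away from the pole) to recursively reduce $\partial_t^k \Gamma$ to spatial derivatives of order up to $2k$, with coefficients involving $\partial_t^{j} D_x^{\boldsymbol\alpha}$ of $a_{ij},b_i$; the desired bound then follows from the spatial derivative bounds already proved.

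The main obstacle is the delicate bookkeeping in the derivative step: near $s=t$ the differentiated parametrix $D^{\mathbf{m}}_x Z(x,t;\eta,s)$ is singular of order $(t-s)^{-|\mathbf{m}|/2}$, while $\Phi$ is singular of order $(s-\tau)^{-(d+1)/2}$ near $s=\tau$, and one must show that the convolution remains integrable and produces the \emph{combined} Gaussian decay rather than accumulating extra logarithmic or polynomial factors. This is exactly the step that requires both \cref{lem:Ia} (to assemble the two Gaussian factors) and the cancellation $L Z = (L - L_{\xi,\tau})Z$ (to produce the extra half-power of $(t-\tau)$ that makes the iteration converge), and it is where the smoothness hypothesis on $a_{ij},b_i$ enters quantitatively.
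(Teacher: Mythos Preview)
Your proposal is correct and follows essentially the same route as the paper: both use Levi's parametrix from Friedman, write $\Gamma = Z + Z\star\Phi$ with $\Phi$ solved by a Neumann/Volterra iteration controlled via \cref{lem:Ia}, and then reduce $\partial_t^k\Gamma$ to spatial derivatives using the equation $\partial_t\Gamma = \sum a_{ij}\partial_{x_ix_j}\Gamma + \sum b_i\partial_{x_i}\Gamma$. The only minor tactical difference is that the paper handles derivatives of the convolution term by switching $D^{\boldsymbol\alpha}_x Z$ to $D^{\boldsymbol\alpha}_\xi Z$ and integrating by parts onto $\Phi$, whereas you describe distributing derivatives between $Z$ and $\Phi$; both work and rely on the same bound \cref{eq:DhPhi} for $D^{\boldsymbol\alpha}_x\Phi$.
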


\begin{proof}
  The proof below is classical, although not easy to find. For the reader's convenience, we provide a sketch using scattered parts of the construction in~\cite{friedman2008partial}, mostly built on Chapter 9.

  The construction of $\Gamma$ goes through the parametrix method by Levi. Denote
  \begin{align*}
    v^{y,s}(x-\xi) = & \sum_{i,j=1}^d a_{ij}(y,s)(x_i-\xi_i)(x_j-\xi_j), \\
    w^{y,s}(x-\xi,t;\tau) = & \frac{1}{(t-\tau)^{d/2}} \exp \left [ -\frac{v^{y,s}(x-\xi)}{4(t-\tau)} \right ], \\
    Z(x-\xi,t;\xi,\tau) = & C(\xi,\tau) w^{\xi,\tau}(x-\xi,t;\tau), \quad \textrm{and} \\
    C(x,t) = & (2\sqrt{\pi})^{-d} \sqrt{|\det(a_{ij})(x,t)|}.
  \end{align*}
  Now $Z(x-\xi,t;\xi,\tau)$ solves the constant coefficient equation with no lower order terms given by
  \[
    L_0 Z (x,t) \equiv \mathrm{Tr}(A(\xi,\tau) D_x^2 Z) - \partial_t Z = 0,
  \]
  where $D_x^2 Z$ is the Hessian matrix of $Z$ with respect to $x$ and we can represent the fundamental solution $\Gamma$ to $L$ as
  \begin{align} \label{eq:Parametrix}
    \Gamma(x,t;\xi,\tau) = Z(x-\xi,t; \xi,\tau) + \int\limits_\tau^t \int\limits_{\R^d} Z(x-\eta,t;\eta,s)\Phi(\eta,s;\xi,\tau) d\eta ds,
  \end{align}
  where $\Phi$ is determined such that $L\Gamma = 0$. In particular, $\Phi$ satisfies
  \begin{align*}
    \Phi(x,t;\xi,\tau) = L Z(x-\xi,t;\xi,\tau) + \int\limits_\tau^t \int\limits_{\R^d} LZ(x-\eta,t;\eta,s) \Phi(\eta,s; \xi,\tau) d\eta ds,
  \end{align*}
  where
  \begin{multline*}
    L Z(x-\xi,t;\xi,\tau) = \sum_{i,j=1}^d (a_{ij}(x,t) - a_{ij}(\xi,\tau))\frac{\partial^2}{\partial x_i \partial x_j} Z(x-\xi,t;\xi,\tau)
    \\
    + \sum_{i=1}^d b_i(x,t) \frac{\partial}{\partial x_i} Z(x-\xi,t;\xi,\tau).
  \end{multline*}
  In the rest of the proof, we use the notation
  $LZ(x-\xi,t;\xi,\tau) = \mathcal{K}(x,t;\xi,\tau)$ for the \emph{correction kernel}.
  The correction kernel measures how well the constant coefficient solution solves the variable coefficient equation.
  It can be shown, see~\cite{friedman2008partial}, that $\Phi$ can be given as
  \begin{align*}
    \Phi(x,t;\eta,s) = \sum_{m=1}^\infty \mathcal{K}_m(x,t;\eta,s),
  \end{align*}
  where $\mathcal{K}_1 = \mathcal{K}$ and, recursively,
  \begin{align*}
    \mathcal{K}_{m+1}(x,t;\xi,\tau) = \int\limits_\tau^t \int\limits_{\R^d} \mathcal{K}(x,t;\eta,s) \mathcal{K}_m(\eta,s;\xi,\tau) d\eta ds.
  \end{align*}
  We first show that for some constant $C$, we have
  \begin{align} \label{eq:DhPhi}
    |D^{\boldsymbol{\alpha}}_x \Phi(x,t;\xi,\tau)| \leq \frac{C}{(t-\tau)^{(d+|{\boldsymbol{\alpha}}|)/2}} \exp \left [ -C \frac{|x-\xi|^2}{t-\tau} \right ].
  \end{align}
  We first recall that if $D^{\boldsymbol{\alpha}}_x a_{ij}$ exist and are bounded continuous functions of $(x,t)$, then the correction kernel $\mathcal{K}$ satisfies see~\cite{friedman2008partial},
  \begin{align} \label{eq:Kbound}
    |D^{\boldsymbol{\alpha}}_x \mathcal{K}(x,t;\xi,\tau)| \leq \frac{C}{(t-\tau)^{(d+1+|{\boldsymbol{\alpha}}|)/2}} \exp \left [ -C \frac{|x-\xi|^2}{t-\tau} \right ],
  \end{align}
  see~\cite{friedman2008partial}.
  Now, following the proof of~\cite[Theorem 7, p.260]{friedman2008partial}, we write $\mathcal{K}_2$ as
  \begin{align*}
    \mathcal{K}_2(x,t;\xi,\tau) = & \int\limits_{\tau}^{\tau + (t-\tau)/2} \int\limits_{\R^d} \mathcal{K}(x,t;\eta,s)\mathcal{K}(\eta,s;\xi,\tau) d\eta ds
    \\
    & +
    \int\limits_{\tau + (t-\tau)/2}^t \int\limits_{\R^d} \mathcal{K}(x,t;\eta,s)\mathcal{K}(\eta,s;\xi,\tau) d\eta ds
    \\
    =: & \mathcal{K}_{21} + \mathcal{K}_{22}.
  \end{align*}
  In term $\mathcal{K}_{21}$ note that $t-s \geq (t-\tau)/2 > 0$, and thus the integral is absolutely convergent, and we can change the order of integration and differentiation. Using also \cref{eq:Kbound,lem:Ia} gives
  \begin{align*}
    |D^{\boldsymbol{\alpha}}_x \mathcal{K}_{21}(x,t;\xi,\tau)| \leq & C \int\limits_{\tau + (t-\tau)/2}^t \frac{1}{(t-s)^{(1+|{\boldsymbol{\alpha}}|)/2}(s-\tau)^{1/2}} I_C(x,t;\xi,\tau,s) ds
    \\
    \leq&
    C \int\limits_{\tau + (t-\tau)/2}^t \frac{1}{(t-s)^{(1+|{\boldsymbol{\alpha}}|)/2}(s-\tau)^{1/2} } ds \frac{1}{(t-\tau)^{d/2}}\exp \left [-C_1 \frac{|x-\xi|^2}{t-\tau} \right ]
    \\
    \leq&
    \frac{C}{(t-\tau)^{(d+|{\boldsymbol{\alpha}}|)/2}}\exp \left [-C_1 \frac{|x-\xi|^2}{t-\tau} \right ].
  \end{align*}
  Treating term $\mathcal{K}_{22}$ similarly shows that we gain one power of $\sqrt{t-\tau}$ when moving from $\mathcal{K}_1$ to $\mathcal{K}_2$, and repeating the argument recursively for $\mathcal{K}_m$, leads to
  \begin{align*}
    |D^{\boldsymbol{\alpha}}_x \mathcal{K}_{m}(x,t;\xi,\tau)| \leq \frac{B_m}{(t-\tau)^{(d+|{\boldsymbol{\alpha}}|+(2-m))/2}}\exp \left [-C_m \frac{|x-\xi|^2}{t-\tau} \right ],
  \end{align*}
  for constants $B_m$ and $C_m$. It is shown in~\cite[p.251-255]{friedman2008partial} that $\sum_{m=1}^\infty D^{\boldsymbol{\alpha}}_x \mathcal{K}_m(x,t;\xi,\tau)$ is convergent. This leads to \cref{eq:DhPhi}.

  We next show that \cref{eq:DhPhi} gives the desired bound for $\Gamma$. For this, we use the representation formula \cref{eq:Parametrix}. Let $\mathbf{s}$ be a multi-index $0 \leq |\mathbf{s}| \leq r$ then note that since $D^{\mathbf{s}}_y a_{ij}(y,t)$ are bounded continuous functions, we have
  \[
    |D^{\boldsymbol{\beta}}_x D^{\mathbf{s}}_y Z(x-\xi,t;y,\tau)| \leq \frac{B_{\boldsymbol{\beta}}}{(t-\tau)^{(d+|\boldsymbol{\beta}|)/2}} \exp \left [ - b_{\boldsymbol{\beta}} \frac{|x-\xi|^2}{t-\tau}\right ],
  \]
  for any multi-index $\boldsymbol{\beta}$.
  This gives the desired bound for the first term in \cref{eq:Parametrix}. The second term can be handled by interchanging $D^{\boldsymbol{\alpha}}_x$ to $D^{\boldsymbol{\alpha}}_\xi$ when differentiating $Z$,  by applying integration by parts, \cref{eq:DhPhi}, and \cref{lem:Ia}. We have now covered the first claim of the lemma.  To prove estimates on the time derivatives of $\Gamma$, one can use that
  \begin{align*}
    \frac{\partial \Gamma(x,t;\xi,\tau)}{\partial t} = L\Gamma (x,t;\xi,\tau).
  \end{align*}
  Together with the first part, this directly leads to the claimed bound for $\frac{\partial \Gamma(x,t;\xi,\tau)}{\partial t}$. To obtain the claimed estimate for higher order derivatives, one simply differentiates both sides with respect to $t$. This covers the second part of the lemma.
\end{proof}

\subsection{Moment bounds}
In order to prove our main result, we need regularity and moment bounds for the solution $u$, see \cref{lem:moment_bounds_differences} below. Similar results for SHE is proved in~\cite{chen2014holder,sanz2002holder,walsh1984intro}.
The following lemma is a straightforward extension of a result for SHE from~\cite{sanz2002holder} to the case of a general, possibly non-symmetric, fundamental solution $\Gamma$. For the reader's convenience and for the sake of completeness, we present the proof.
\begin{lemma}
  \label{lemma:integral-alpha-rep}
  For $\alpha \in \left(0,\nicefrac{1}{2} -\nicefrac{\beta}{4}\right)$, set
  \begin{align*}
    Y^\alpha(x,t) = \int\limits_{0}^t \int\limits_{\mathbb{R}^d} \Gamma(x,t;y,s) \sigma(u(y,s)) (t-s)^{-\alpha} W(dy,ds).
  \end{align*}
  Then
  \begin{align*}
    \int\limits_0^t \int_{\mathbb{R}^d} \Gamma(x,t;y,s)  \sigma(u(y,s))  W(dy,ds) = C(\alpha) \int_0^t \int_{\mathbb{R}^d} \Gamma(x,t;z,r)(t-r)^{\alpha-1} Y^\alpha(z,r) dr dz,
  \end{align*}
  where $C(\alpha) = \frac{\sin(\pi \alpha)}{\pi}$. Moreover, for any $p\geq 2$, we have
  \begin{equation*}
    \sup_{0 \leq s \leq t} \sup_{x \in \R^d} \mathbb{E}\left [|Y^\alpha(x,s)|^p\right ] < \infty.
  \end{equation*}
\end{lemma}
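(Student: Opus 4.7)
The plan is to adapt the classical stochastic factorization method (as in Da Prato-Kwapien-Zabczyk and Sanz-Sole) to the non-symmetric fundamental solution $\Gamma$. I start from the beta-function identity
\begin{equation*}
C(\alpha)\int_s^t (t-r)^{\alpha-1}(r-s)^{-\alpha}\,dr = 1, \qquad \alpha\in(0,1),
\end{equation*}
combined with the Chapman-Kolmogorov relation $\Gamma(x,t;y,s)=\int_{\mathbb{R}^d}\Gamma(x,t;z,r)\Gamma(z,r;y,s)\,dz$ for $s<r<t$, which holds for the fundamental solution of the parabolic operator $L$. Multiplying the identity above by $\Gamma(x,t;y,s)$ and then replacing the trailing $\Gamma(x,t;y,s)$ inside the integrand via Chapman-Kolmogorov yields
\begin{equation*}
\Gamma(x,t;y,s) = C(\alpha)\int_s^t\int_{\mathbb{R}^d}(t-r)^{\alpha-1}(r-s)^{-\alpha}\Gamma(x,t;z,r)\Gamma(z,r;y,s)\,dz\,dr.
\end{equation*}
Substituting this into the left-hand side of the claimed representation and applying a stochastic Fubini argument to pull the $dr\,dz$ integrals outside the Walsh integral produces the right-hand side, since the inner integral is by construction exactly $Y^\alpha(z,r)$.

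For the moment bound, the BDG-type inequality from \cref{lemma:conditional} applied to $Y^\alpha(x,s)$ gives
\begin{equation*}
\mathbb{E}[|Y^\alpha(x,s)|^p] \leq C_p\left(\int_0^s (s-r)^{-2\alpha}\iint_{\mathbb{R}^{2d}}\Gamma(x,s;y,r)\Gamma(x,s;z,r)\gamma(y-z)\mathbb{E}\bigl[|\sigma(u(y,r))\sigma(u(z,r))|^{p/2}\bigr]^{2/p}\,dy\,dz\,dr\right)^{p/2}.
\end{equation*}
Since $\sigma$ is $M$-Lipschitz and non-negative, $|\sigma(v)|\leq\sigma(0)+M|v|$, and $\sup_{(y,r)\in\mathbb{R}^d\times[0,T]}\mathbb{E}[|u(y,r)|^p]<\infty$ by a standard Picard-Gronwall argument applied to the mild formulation \cref{eq:solution-def}. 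Hence the $\sigma$-factor is uniformly bounded in $(y,z,r)$, and the task reduces to estimating the deterministic kernel integral.

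Using the Gaussian bound $\Gamma(x,s;y,r)\leq C(s-r)^{-d/2}e^{-C|x-y|^2/(s-r)}$ from \cref{lma:heat-bounds}, the change of variables $y=x+\sqrt{s-r}\,\tilde{y}$, $z=x+\sqrt{s-r}\,\tilde{z}$ turns $\|y-z\|^{-\beta}$ into $(s-r)^{-\beta/2}\|\tilde{y}-\tilde{z}\|^{-\beta}$, while the two Jacobians $(s-r)^{d/2}$ cancel the $(s-r)^{-d}$ prefactor. What remains is $\iint e^{-C(|\tilde{y}|^2+|\tilde{z}|^2)}\|\tilde{y}-\tilde{z}\|^{-\beta}\,d\tilde{y}\,d\tilde{z}$, which is finite whenever $\beta<d$ (and in particular under our assumption $\beta<1$). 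The outer time integral then collapses to $\int_0^s(s-r)^{-2\alpha-\beta/2}\,dr$, which converges uniformly in $s\leq t$ precisely when $2\alpha+\beta/2<1$, matching the hypothesis $\alpha<\tfrac{1}{2}-\tfrac{\beta}{4}$. The main obstacle is the rigorous justification of the stochastic Fubini interchange in the first part: one must check that the joint integrand belongs to the appropriate mixed Lebesgue-Walsh $L^2$ space, but the required integrability is supplied by the very same heat-kernel and Riesz-kernel computation that proves the moment bound, so the two parts of the lemma reinforce one another.
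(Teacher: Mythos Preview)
Your proof is correct and follows essentially the same route as the paper: the representation formula is obtained via the beta identity, the semigroup property of $\Gamma$, and stochastic Fubini, exactly as in the paper. For the moment bound the paper invokes the Fourier-side condition $\int_{\mathbb{R}^d}\|\xi\|^{\beta-d}(1+\|\xi\|^2)^{-(1-2\alpha)}\,d\xi<\infty$ from Sanz-Sol\'e--Sarr\`a rather than your direct physical-space scaling, but both computations yield the same constraint $\alpha<\tfrac12-\tfrac\beta4$ and are equivalent for the Riesz kernel.
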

\begin{proof}
  We have
  \begin{multline*}
    \int_0^t \int_{\mathbb{R}^d} \Gamma(x,t;z,r)(t-r)^{\alpha-1} Y^\alpha(z,r) dr dz
    \\
    = \int_0^t \int_{\mathbb{R}^d} \Gamma(x,t;z,r)(t-r)^{\alpha-1} \int\limits_{0}^r \int\limits_{\mathbb{R}^d} \Gamma(z,r;y,s) \sigma(u(y,s)) (r-s)^{-\alpha} W(dy,ds) dr dz.
  \end{multline*}
  By stochastic Fubini's theorem and the semigroup property of $\Gamma$, here
  \begin{align*}
    = &
    \int\limits_0^t \int\limits_{0}^r \int_{\mathbb{R}^d} \Gamma(x,t;y,s) (t-r)^{\alpha-1} (r-s)^{-\alpha} \sigma(u(y,s))  W(dy,ds) dr
    \\
    = & \int\limits_0^t \left ( \int\limits_{s}^t (t-r)^{\alpha-1} (r-s)^{-\alpha} dr \right ) \int_{\mathbb{R}^d} \Gamma(x,t;y,s)  \sigma(u(y,s))  W(dy,ds)
    \\
    = & \frac{1}{C(\alpha)}\int\limits_0^t \int_{\mathbb{R}^d} \Gamma(x,t;y,s)  \sigma(u(y,s))  W(dy,ds).
  \end{align*}
  This shows the first claim. For the second claim, we observe by arguing as in~\cite{sanz2002holder} (using only the Gaussian upper bounds of \cref{lma:heat-bounds}) that
  \begin{align}
    \label{eq:LpBoundSSS}
    \sup_{0 \leq s \leq t} \sup_{x \in \R^d } \mathbb{E}\left [|Y^\alpha(x,s)|^p\right ] < \infty
  \end{align}
  provided that
  \begin{align}
    \int_{\mathbb{R}^d} \frac{\hat \gamma(\xi)}{(1+\|\xi\|^2)^{1-2\alpha}} d\xi = \int_{\mathbb{R}^d} \frac{\|\xi\|^{\beta-d}}{(1+\|\xi\|^2)^{1-2\alpha}} d\xi \leq C \int_{1}^\infty r^{\beta-3+4\alpha} dr < \infty.
  \end{align}
  This requires $\beta-3+4\alpha < -1$, or equivalently $\alpha < \nicefrac{1}{2}-\nicefrac{\beta}{4}$.
\end{proof}

\begin{lemma} \label{lem:moment_bounds_differences}
  Consider the integral
  \begin{align*}
    I(x,t) = \int\limits_{0}^t \int\limits_{\mathbb{R}^d} \Gamma(x,t;y,s) \sigma(u(y,s)) W(dy,ds)
  \end{align*}
  for a mild solution $u$ of \cref{SPDE-1}. Let $p\geq 2$. Then, for any $\nu \in (0,1-\nicefrac{\beta}{2})$, $h\in\mathbb{R}^d$, and $t\in [0,T]$, we have
  \begin{align*}
    \mathbb{E}\left [|I(x,t)-I(x+h,t)|^p\right ]
     & \leq C |h|^{\nu p},
  \end{align*}
  and, for any $\nu \in (0,\nicefrac{1}{2}-\nicefrac{\beta}{4})$, $h\in (0,T)$, and $t\in [0,T-h]$, we have
  \begin{align*}
    \mathbb{E}\left [|I(x,t+h)-I(x,t)|^p\right ]
     & \leq C_T h^{\nu p}.
  \end{align*}
\end{lemma}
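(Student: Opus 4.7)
For both bounds, the first step is to apply the Burkholder--Davis--Gundy inequality \eqref{eq:BDG} together with the fact that $\sigma$ is $M$-Lipschitz and that $u$ has uniformly bounded $L^p$-moments (a standard Gronwall-type consequence of the mild formulation \eqref{eq:solution-def} and the isometry \eqref{Isometry}, under the Riesz assumption $\beta<1$). This reduces each increment to a deterministic integral against the covariance kernel $\gamma(z_1-z_2)=\|z_1-z_2\|^{-\beta}$.

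\textbf{Spatial increments.} After BDG and the uniform bound $\sup_{y,s}\mathbb{E}[|\sigma(u(y,s))|^p]<\infty$, I reduce to controlling
\[
J(h):=\int_0^t\!\!\int_{\mathbb{R}^d}\!\!\int_{\mathbb{R}^d}|\Delta_h\Gamma(y,s)|\,|\Delta_h\Gamma(z,s)|\,\gamma(y-z)\,dy\,dz\,ds,
\]
with $\Delta_h\Gamma(y,s):=\Gamma(x+h,t;y,s)-\Gamma(x,t;y,s)$. By Plancherel, $J(h)=\int_0^t\int_{\mathbb{R}^d}|\widehat{\Delta_h\Gamma}(\xi,s)|^2\|\xi\|^{\beta-d}\,d\xi\,ds$. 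The Gaussian heat-kernel bounds in \cref{lma:heat-bounds} (together with the parametrix identity \eqref{eq:Parametrix}, which implies $\Gamma$ is essentially a Gaussian modulated by a bounded correction) give $|\widehat{\Gamma}(\xi,s)|\le C e^{-c(t-s)\|\xi\|^2}$, and the elementary inequality $|e^{i\xi\cdot h}-1|\le C|\xi\cdot h|^\nu$ for $\nu\in[0,1]$ yields $|\widehat{\Delta_h\Gamma}(\xi,s)|\le C|h|^\nu\|\xi\|^\nu e^{-c(t-s)\|\xi\|^2}$. Substituting and passing to polar coordinates, $J(h)\le C|h|^{2\nu}\int_0^\infty r^{\beta-1+2\nu}\!\int_0^t e^{-2c(t-s)r^2}ds\,dr\le C|h|^{2\nu}\int_0^\infty r^{\beta-3+2\nu}dr$, which is finite iff $\beta-3+2\nu<-1$, i.e.\ $\nu<1-\beta/2$. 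Raising to the power $p/2$ as in \eqref{eq:BDG} gives the claimed $|h|^{\nu p}$ bound.

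\textbf{Temporal increments.} Here I would exploit the factorization in \cref{lemma:integral-alpha-rep}: fix $\alpha\in(\nu,\tfrac{1}{2}-\tfrac{\beta}{4})$ and write
\[
I(x,t+h)-I(x,t)=C(\alpha)\!\int_{\mathbb{R}^d}\!\!\int_0^{t+h}\!\![\Gamma(x,t+h;z,r)(t+h-r)^{\alpha-1}\mathbf{1}_{r\le t+h}-\Gamma(x,t;z,r)(t-r)^{\alpha-1}\mathbf{1}_{r\le t}]Y^\alpha(z,r)\,dr\,dz,
\]
and split the bracket into (i) the boundary piece $r\in[t,t+h]$ and (ii) the difference piece on $[0,t]$. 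For (i), Minkowski in $L^p(\Omega)$ combined with $\sup_{z,r}\mathbb{E}[|Y^\alpha(z,r)|^p]<\infty$ and $\int_{\mathbb{R}^d}\Gamma(x,t+h;z,r)\,dz\le C$ leaves $\int_t^{t+h}(t+h-r)^{\alpha-1}dr=h^\alpha/\alpha$. For (ii), the smoothing $(t-r)^{\alpha-1}$ in time and the semigroup/regularity properties of $\Gamma$ reduce the integrand to something controlled by $h^\alpha(t-r)^{-1}$ on a suitable scale, giving again $O(h^\alpha)$ after integration. Since $\alpha>\nu$ was arbitrary, the bound $h^{\nu p}$ follows.

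\textbf{Main obstacle.} The spatial step is essentially a Fourier computation and should go through cleanly. The genuine work sits in the temporal step, specifically in estimating piece (ii): one must carefully handle the $\Gamma(x,t+h;z,r)-\Gamma(x,t;z,r)$ difference together with the $(t-r)^{\alpha-1}$ versus $(t+h-r)^{\alpha-1}$ difference, using the time-derivative bound from \cref{lma:heat-bounds} (with $k=1$) and dominating the resulting singular kernels. The role of $\alpha$ is crucial: it must be strictly larger than $\nu$ so that $\nu p$-regularity can be extracted, yet strictly smaller than $\tfrac{1}{2}-\tfrac{\beta}{4}$ to keep $Y^\alpha$ in $L^p$, which explains the Hölder exponent threshold.
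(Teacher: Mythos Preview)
Your temporal argument via \cref{lemma:integral-alpha-rep} is the paper's approach and is essentially correct as sketched; the paper fills in piece (ii) by writing $\hat\Gamma(x,t;z,r)=(t-r)^{d/2}\Gamma(x,t;z,r)$, splitting the difference into a $\hat\Gamma$-difference and a power-difference, and interpolating each via $|a-b|\le 2|a-b|^\delta\max(|a|,|b|)^{1-\delta}$ together with the time-derivative bound from \cref{lma:heat-bounds}. This yields $h^\delta$ for any $\delta\in(0,\alpha)$.

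Your spatial argument, however, has a genuine gap. The Fourier step relies on two facts that hold for the heat kernel but \emph{not} for the general fundamental solution $\Gamma$ of \cref{operator}: the bound $|\widehat{\Gamma}(\xi,s)|\le Ce^{-c(t-s)\|\xi\|^2}$ and the factorization $\widehat{\Delta_h\Gamma}(\xi,s)=(e^{-i\xi\cdot h}-1)\widehat{\Gamma}(\xi,s)$. Both require $\Gamma(x,t;y,s)$ to be a function of $x-y$, which fails whenever $a_{ij}$ or $b_i$ are non-constant; the pointwise Gaussian upper bound $|\Gamma|\le C p_{c(t-s)}$ does \emph{not} imply a corresponding bound on $|\widehat\Gamma|$. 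The paper sidesteps this by using the factorization \cref{lemma:integral-alpha-rep} for the spatial increment as well: one writes $I(x,t)-I(x+h,t)$ as a deterministic $(dz,dr)$-integral against $Y^\alpha$, applies H\"older with $\sup_{z,r}\mathbb{E}|Y^\alpha|^p<\infty$, and then bounds $\int_{\mathbb{R}^d}|\Gamma(x,t;z,r)-\Gamma(x+h,t;z,r)|\,dz$ pointwise via the mean-value theorem and \cref{lma:heat-bounds}, interpolated to the power $\delta$. This gives $|h|^\delta$ for $\delta\in(0,2\alpha)$, hence the full range $\nu<1-\beta/2$. Alternatively, your BDG route can be repaired by abandoning Fourier and instead bounding $|\Delta_h\Gamma(y,s)|\le C|h|^\delta(t-s)^{-(d+\delta)/2}e^{-c|y-x|^2/(t-s)}$ pointwise (interpolating the gradient bound with the direct bound), which after the $\gamma$-convolution produces $\int_0^t(t-s)^{-\delta-\beta/2}ds$, finite for $\delta<1-\beta/2$.
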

\begin{proof}
  We begin with the spatial differences. Using \cref{lemma:integral-alpha-rep} with $\alpha \in \left(0,\nicefrac{1}{2} -\nicefrac{\beta}{4}\right)$, we obtain
  \begin{align*}
    I(x,t) - I(x+h,t) = C(\alpha)\int\limits_{0}^t \int\limits_{\mathbb{R}^d} \left( \Gamma(x,t;z,r) (t-r)^{\alpha-1} - \Gamma(x+h,t;z,r) (t-r)^{\alpha-1} \right) Y^\alpha(z,r) dr dz .
  \end{align*}
  By Hölder's inequality and \cref{eq:LpBoundSSS}, we have
  \begin{align*}
    \mathbb{E}\left [|I(x,t)-I(x+h,t)|^p\right ]
     & \leq C \left | \int\limits_{0}^t \int\limits_{\mathbb{R}^d} \left( \Gamma(x,t;z,r) (t-r)^{\alpha-1} - \Gamma(x+h,t;z,r) (t-r)^{\alpha-1} \right) dr dz  \right |^p.
  \end{align*}
  Using the mean value theorem and \cref{lma:heat-bounds}, we can write
  \begin{align*}
    |\Gamma(x,t;z,r) - \Gamma(x+h,t;z,r)| \leq
    \frac{C |h|}{(t-r)^{(d+1)/2}} e^{-\frac{|z-\bar x|^2}{C(t-r)}} \leq \frac{C |h|}{(t-r)^{(d+1)/2}} \left ( e^{-\frac{|z-x-h|^2}{C(t-r)}} + e^{-\frac{|z-x|^2}{C(t-r)}} \right ),
  \end{align*}
  where $\bar x $ is a point on the line segment between $x$ and $x+h$.
  Now, since $\int_{\R^d} \Gamma(x,t;z,r) dz = 1$, we obtain, for any $\delta\in(0,1)$, an estimate
  \begin{align*}
    \int\limits_{\mathbb{R}^d} & \left | \Gamma(x,t;z,r) (t-r)^{\alpha-1} - \Gamma(x+h,t;z,r) (t-r)^{\alpha-1} \right | dz \\
    & \leq (t-r)^{\alpha - 1} \left ( \int\limits_{\mathbb{R}^d} \left | \Gamma(x,t;z,r) - \Gamma(x+h,t;z,r) \right | dz \right )^{\delta} \\
    & \leq (t-r)^{\alpha - 1 - \frac{\delta}{2}} |h|^{\delta} \left ( \frac{1}{t^{d/2} }\int\limits_{\mathbb{R}^d} e^{-\frac{|z-x-h|^2}{C(t-r)}} dz + \frac{1}{t^{d/2} }\int\limits_{\mathbb{R}^d} e^{-\frac{|z-x|^2}{C(t-r)}} dz \right )^{\delta} \\
    & \leq C |h|^{\delta} (t-r)^{\alpha - 1 - \frac{\delta}{2}}.
  \end{align*}
  Here
  \begin{align*}
    \int_0^t (t-r)^{\alpha - 1 - \frac{\delta}{2}} dr \leq C t^{\alpha - \frac{\delta}{2}}
  \end{align*}
  whenever $\delta \in (0,2\alpha)$. This leads to
  \begin{align*}
    \mathbb{E}\left [|I(x,t)-I(x+h,t)|^p\right ] \leq C_T |h|^{\delta p}
  \end{align*}
  for any $\delta \in (0,2\alpha)$. Since $\alpha <\nicefrac{1}{2} -\nicefrac{\beta}{4}$, we obtain the first claim. For the second claim, we proceed similarly and write
  \begin{align*}
    I(x,t+h) - I(x,t) = \int\limits_{0}^t \int\limits_{\mathbb{R}^d} \left( \Gamma(x,t+h;z,r) (t+h-r)^{\alpha-1} - \Gamma(x,t;z,r) (t-r)^{\alpha-1} \right) Y^\alpha(z,r) dr dz,
  \end{align*}
  where now we have, by denoting $\hat \Gamma(x,t;z,r) = (t-r)^{\frac{d}{2}} \Gamma(x,t;z,r)$, that
  \begin{align*}
    \int\limits_{\mathbb{R}^d} \left | \hat \Gamma(x,t+h;z,r) (t+h-r)^{\alpha-1-\frac{d}{2}} - \hat \Gamma(x,t;z,r) (t-r)^{\alpha-1-\frac{d}{2}} \right | dz \\
    \leq
    (t+h-r)^{\alpha-1-\frac{d}{2}} \int\limits_{\mathbb{R}^d} \left | \hat \Gamma(x,t+h;z,r) - \hat \Gamma(x,t;z,r) \right | dz
    \\
    +
    \int\limits_{\mathbb{R}^d} \Gamma(x,t;z,r) \left |  (t+h-r)^{\alpha-1-\frac{d}{2}} - (t-r)^{\alpha-1-\frac{d}{2}}  \right | dz.
  \end{align*}
  Using the Gaussian upper bounds given by \cref{lma:heat-bounds} we see that $\hat \Gamma(x,t;z,r)$ is a bounded function. Together with the mean value theorem, this allows us to obtain that, for any $\delta \in (0,1)$, we have
  \begin{align*}
    \hat \Gamma(x,t+h;z,r) - \hat \Gamma(x,t;z,r) \leq C |\hat \Gamma(x,t+h;z,r) - \hat \Gamma(x,t;z,r)|^{\delta} = h^\delta \left | \partial_t \hat \Gamma(x,t+\bar h;z,r) \right |^{\delta},
  \end{align*}
  where $\bar h \in (0,h)$. Using \cref{lma:heat-bounds} leads to
  \begin{align*}
    |\hat \Gamma(x,t+h;z,r) - \hat \Gamma(x,t;z,r)| \leq C h^\delta (t-r)^{-\delta}  e^{-\frac{\delta |z-x|^2}{C (t+h-r)}}.
  \end{align*}
  Since we also have (see~\cite{sanz2002holder})
  \begin{align*}
    |(t+h-r)^{\alpha-1-\frac{d}{2}} - (t-r)^{\alpha-1-\frac{d}{2}}| \leq &
    2 (t-r)^{(\alpha-1-\frac{d}{2})(1-\delta)} |(t+h-r)^{\alpha-1-\frac{d}{2}} - (t-r)^{\alpha-1-\frac{d}{2}}|^\delta
    \\
    \leq &
    C (t-r)^{(\alpha-1-\frac{d}{2})(1-\delta) + (\alpha-2-\frac{d}{2})\delta} h^\delta
    \\
    = & C (t-r)^{\alpha-1-\frac{d}{2} - \delta} h^\delta,
  \end{align*}
  putting all together gives
  \begin{align*}
    \int\limits_{\mathbb{R}^d} \left | \hat \Gamma(x,t+h;z,r) (t+h-r)^{\alpha-1-\frac{d}{2}} - \hat \Gamma(x,t;z,r) (t-r)^{\alpha-1-\frac{d}{2}} \right | dz
    \leq
    C h^\delta
  \end{align*}
  for $\delta \in (0,\alpha)$. Since $\alpha  <\nicefrac{1}{2} - \nicefrac{\beta}{4}$, this proves the second claim.
\end{proof}

\subsection{Technical lemmas}

\begin{lemma}\label{VolBound}
  Let $A,B\subset\mathbb{R}^d$ be Lebesgue measurable sets such that $|A|+|B|<\infty$
  and let $0<\beta<d$. Then there exists a constant $K_\beta$ such that
  \begin{equation}
    \sup_{\alpha>0}\sup_{c\in\mathbb{R}^d}\int\limits_{A}\int\limits_{B}\|z_1-\alpha z_2+c\|^{-\beta}dz_2dz_1\leq K_{\beta}(1\vee |A|)|B|.
  \end{equation}
\end{lemma}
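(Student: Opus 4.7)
The plan is to reduce the double integral to a translation-invariant problem via a change of variables in the inner integral, then use a level-set (layer cake) argument to get a bound on $\int_{A'} \|w\|^{-\beta}\, dw$ that is uniform over all translates $A'$ of $A$.

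First, by Fubini, I would rewrite the integral with the $z_1$ integration inside, and then, for each fixed $z_2 \in B$, apply the substitution $w = z_1 - \alpha z_2 + c$ (with Jacobian $1$). Setting $A_{\alpha,c}(z_2) := A - \alpha z_2 + c$, this turns the inner integral into $\int_{A_{\alpha,c}(z_2)} \|w\|^{-\beta}\, dw$, which is now free of the parameters $\alpha$ and $c$ except through the fact that $|A_{\alpha,c}(z_2)| = |A|$. The task has therefore been reduced to proving
\[
\sup_{A' \text{ measurable},\, |A'| = |A|} \int_{A'} \|w\|^{-\beta}\, dw \leq K_\beta (1 \vee |A|).
\]

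Next, I would use the layer cake formula: for any measurable $A'$ with $|A'| = |A|$,
\[
\int_{A'} \|w\|^{-\beta}\, dw
= \int_0^\infty \bigl|\{w \in A' : \|w\|^{-\beta} > t\}\bigr|\, dt
= \int_0^\infty \bigl|A' \cap B(0, t^{-1/\beta})\bigr|\, dt.
\]
Using the two simple majorants $|A' \cap B(0, t^{-1/\beta})| \leq |A|$ and $|A' \cap B(0, t^{-1/\beta})| \leq \omega_d t^{-d/\beta}$, where $\omega_d$ is the volume of the unit ball in $\mathbb{R}^d$, I split the $t$-integral at the balancing point $t_0 := (\omega_d/|A|)^{\beta/d}$. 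On $[0,t_0]$ I use the first bound, and on $[t_0,\infty)$ the second; the condition $\beta < d$ is exactly what makes the tail integral $\int_{t_0}^\infty \omega_d t^{-d/\beta}\, dt$ finite. Both pieces are of order $|A|^{1-\beta/d}$, so there exists a constant $C_{d,\beta}$ (depending only on $d$ and $\beta$) with
\[
\int_{A'} \|w\|^{-\beta}\, dw \leq C_{d,\beta} \,|A|^{1-\beta/d}.
\]

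Finally, since $0 < 1 - \beta/d < 1$, we have $|A|^{1-\beta/d} \leq 1$ when $|A| \leq 1$ and $|A|^{1-\beta/d} \leq |A|$ when $|A| \geq 1$, so in both cases $|A|^{1-\beta/d} \leq 1 \vee |A|$. Integrating over $z_2 \in B$ yields the claimed bound with $K_\beta = C_{d,\beta}$, uniformly in $\alpha > 0$ and $c \in \mathbb{R}^d$. There is no real obstacle here; the only subtle point is to notice that the $\alpha$- and $c$-dependence can be absorbed by a translation of the integration domain, so that the estimate becomes a purely geometric statement about how large $\|w\|^{-\beta}$ can integrate to over a set of prescribed measure, and this is then handled by the level-set/balancing argument.
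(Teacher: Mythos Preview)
Your proof is correct. Both arguments begin the same way: after Fubini and fixing $z_2$, the inner integral becomes $\int_{A'}\|w\|^{-\beta}\,dw$ for a translate $A'$ of $A$, and the task is to bound this uniformly over all such translates. The paper then simply splits $A'$ at the unit ball centered at the singularity: on $A'\cap B(0,1)$ the integral is dominated by $\int_{B(0,1)}\|w\|^{-\beta}\,dw<\infty$ (using $\beta<d$), and on $A'\setminus B(0,1)$ the integrand is at most $1$ so the contribution is at most $|A|$; summing gives $C_\beta+|A|\le K_\beta(1\vee|A|)$. Your layer-cake computation instead produces the sharper, scale-invariant estimate $\int_{A'}\|w\|^{-\beta}\,dw\le C_{d,\beta}|A|^{1-\beta/d}$ (this is essentially the bathtub principle: among sets of fixed measure the integral is maximized by the centered ball), which you then crudely bound by $1\vee|A|$. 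So your route does a bit more work and yields a better intermediate bound, while the paper's radius-$1$ split is more elementary and lands directly on the stated inequality.
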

\begin{proof}
  We have, for all $c\in\mathbb{R}^d$ and $\alpha>0$,
  \begin{multline*}
    \int\limits_{A}\int\limits_{B}\|z_1-\alpha z_2+c\|^{-\beta}dz_2dz_1
    \\
    =\int\limits_{B}\left(\int\limits_{A\cap\mathcal{B}_{\alpha z_2-c,1}}\|z_1- \alpha z_2+c\|^{-\beta}dz_1+\int\limits_{A\setminus\mathcal{B}_{\alpha z_2-c,1}}\|z_1- \alpha z_2+c\|^{-\beta}dz_1\right)dz_2,
  \end{multline*}
  where
  \[
    \int\limits_{A\cap\mathcal{B}_{\alpha z_2-c,1}}\|z_1- \alpha z_2+c\|^{-\beta}dz_1\leq\int\limits_{B_{0,1}}\frac{1}{\|z\|^\beta}<\frac{K_\beta}{2}
  \]
  and
  \[
    \int\limits_{A\setminus\mathcal{B}_{\alpha z_2-c,1}}\|z_1- \alpha z_2+c\|^{-\beta}dz_1\leq |A|.
  \]
  Thus,
  \begin{equation*}
    \int\limits_{A}\int\limits_{B}\|z_1-\alpha z_2+c\|^{-\beta}dz_2dz_1
    \leq \int\limits_B\left(\frac{K_{\beta}}{2}+|A|\right)dz\leq K_{\beta}|B|(1\vee |A|)
  \end{equation*}
  completing the proof.
\end{proof}
\begin{lemma}
  \label{lma:basic-estimates}
  Let
  \begin{align*}
    H(x,R) = \int\limits_{\mathcal{B}_{x,R}} \|y\| e^{-\frac{|y|^2}{C}} dy,
  \end{align*}
  and let $S_k = \mathcal{B}_{0,k+1} \setminus \mathcal{B}_{0,k}$ with $S_0 = \mathcal{B}_{0,1}$. Then there exists a constant $C$ such that, for $t \leq \sqrt{\varepsilon}$, we have
  \begin{align} \label{eq:H_scaling}
    \sum_{k=0}^{\infty}
    \sup_{S_{k}} H\left (\cdot,\frac{\varepsilon}{\sqrt{t}} \right ) \left(k_{1}^{d-1}\vee 1\right)
    \leq C \left (\frac{\varepsilon}{\sqrt{t}} \right )^d
  \end{align}
  and
  \begin{align} \label{eq:improved_scaling}
    \sum_{k=0}^{\infty}
    \sup_{S_{k}} e^{-\frac{(\|z\|-\varepsilon)^2}{2t}} \left(k_{1}^{d-1}\vee 1\right)
    \leq C \left (\frac{\varepsilon}{\sqrt{t}} \right )^{d-1}.
  \end{align}
\end{lemma}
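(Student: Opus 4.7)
The plan is to exploit the rotational invariance of both integrands and to convert each discrete shell-sum into a one-dimensional radial integral that can then be bounded via Fubini.

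For the first bound, I would set $H_{\mathrm{rad}}(r,R):=H(re_1,R)$ so that $H(x,R)=H_{\mathrm{rad}}(\|x\|,R)$ by rotational symmetry. The key averaging step is a ball-inclusion monotonicity: for any $r,r^\ast\in[k,k+1]$ one has $\mathcal{B}_{r^\ast e_1,R}\subseteq\mathcal{B}_{re_1,R+1}$, whence $H_{\mathrm{rad}}(r^\ast,R)\le H_{\mathrm{rad}}(r,R+1)$. Taking the supremum in $r^\ast$, multiplying by $r^{d-1}$, integrating over $[k,k+1]$, and using $\int_k^{k+1} r^{d-1}\,dr\asymp(k^{d-1}\vee 1)$ would yield
\[
(k^{d-1}\vee 1)\,\sup_{S_k}H(\cdot,R)\le C\int_k^{k+1} r^{d-1}H_{\mathrm{rad}}(r,R+1)\,dr.
\]
Summing over $k$ and reassembling as a Cartesian integral, Fubini gives
\[
\sum_{k=0}^\infty(k^{d-1}\vee 1)\sup_{S_k}H(\cdot,R)\le C\int_{\mathbb{R}^d}H(x,R+1)\,dx = C|B_{R+1}|\int_{\mathbb{R}^d}\|y\|e^{-\|y\|^2/C}\,dy \le C(R+1)^d.
\]
Under the hypothesis $t\le\sqrt\varepsilon$ one has $R=\varepsilon/\sqrt t\ge\varepsilon^{3/4}$; in the regime $R\ge 1$ this gives $(R+1)^d\le 2^d R^d$. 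For the complementary range $R<1$ I would replace the enlargement by the pointwise estimate $H(x,R)\le CR^d\|x\|e^{-\|x\|^2/(2C)}$ (volume of $\mathcal{B}_{x,R}$ times the sup of $\|y\|e^{-\|y\|^2/C}$ on the ball) and sum directly against $k^{d-1}e^{-k^2/(2C)}$ to recover the same $CR^d$ bound.

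For the second bound the template is identical, but the Gaussian $f(r):=e^{-(r-\varepsilon)^2/(2t)}$ admits no ball-inclusion analogue. I would instead bound $\sup_{r\in[k,k+1]}f(r)=e^{-D_k^2/(2t)}$ with $D_k:=\mathrm{dist}(\varepsilon,[k,k+1])$, and sum the series directly, splitting at $|k-\varepsilon|\le C\sqrt t$: the bulk contributes $\sim\varepsilon^{d-1}\sqrt t$ (since $k^{d-1}\asymp\varepsilon^{d-1}$ on the active window) while the Gaussian tail is negligible. In the regime $\varepsilon\gtrsim\sqrt t$ relevant for the application, $\varepsilon^{d-1}\sqrt t\le C(\varepsilon/\sqrt t)^{d-1}$, which is equivalent to $t^{d/2}\le C$ and follows from $t\le\sqrt\varepsilon$ together with boundedness of the time horizon.

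The hardest point will be the $k$-uniform comparison between $\sup_{S_k}(\cdot)$ and the $r^{d-1}$-weighted average on $[k,k+1]$. For the first bound, ball enlargement handles this cleanly; for the second, the Gaussian may vary too rapidly on unit intervals when $|k-\varepsilon|\gg\sqrt t$, so I must split the sum by the distance $|k-\varepsilon|$ rather than compare sup to average. Once this split is in place, both estimates reduce to routine Gaussian moment computations.
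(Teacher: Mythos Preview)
For the first inequality \eqref{eq:H_scaling} your argument is correct and actually tidier than the paper's. The paper simply uses the trivial global bound $H(\cdot,R)\le\int_{\mathbb R^d}\|y\|e^{-\|y\|^2/C}\,dy<\infty$ for every $k\le R:=\varepsilon/\sqrt t$, sums $(k^{d-1}\vee1)$ over that range to obtain $CR^d$, and dispatches the tail $k>R$ by Gaussian decay; it never compares the shell supremum with an average, and it does not treat the regime $R<1$ explicitly. Your ball-enlargement followed by Fubini is a genuinely different device that yields the bound in one stroke, and your separate pointwise estimate for $R<1$ patches a case the paper glosses over. (One small fix: the bound $H(x,R)\le CR^d\|x\|e^{-\|x\|^2/(2C)}$ vanishes at $x=0$; replace $\|x\|$ by $\|x\|+1$.)

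For the second inequality \eqref{eq:improved_scaling} your strategy coincides with the paper's (split the sum around the Gaussian center), but your bulk estimate ``$\sim\varepsilon^{d-1}\sqrt t$'' is wrong in the regime that matters. Since $\varepsilon=h^\varrho\to0$ in the application, only the shell $k=0$ sits near the center $\varepsilon$, and its contribution is exactly $1$ (the supremum is attained at $\|z\|=\varepsilon$), not $\varepsilon^{d-1}\sqrt t$; and $1\le C(\varepsilon/\sqrt t)^{d-1}$ is \emph{not} implied by the stated hypothesis $t\le\sqrt\varepsilon$ when $d\ge2$ (take $t=\sqrt\varepsilon$, so $\varepsilon/\sqrt t=\varepsilon^{3/4}\to0$). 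The source of the difficulty is a typo in the lemma: the paper's own proof, and the place where the lemma is invoked, use the rescaled exponent $e^{-(\|z\|-\varepsilon/\sqrt t)^2/2}$, so the Gaussian is centered at $R=\varepsilon/\sqrt t$, which is large in the application. With that correction your split at $|k-R|\le C$ gives $k^{d-1}\vee1\le CR^{d-1}$ on the active window and a summable Gaussian tail --- which is precisely the paper's argument.
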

\begin{proof}
  To prove \cref{eq:H_scaling}, we first write
  \begin{align*}
    \sum_{k=0}^{\infty}
    \sup_{S_{k}} H \left (\cdot,\frac{\varepsilon}{\sqrt{t}} \right ) \left(k_{1}^{d-1}\vee 1\right)
    =
    \sum_{k=0}^{\infty} \left(k_{1}^{d-1}\vee 1\right) \sup_{z \in S_{k}}\int\limits_{\mathcal{B}_{z,\varepsilon/\sqrt{t}}} \|y\| e^{-\frac{|y|^2}{C}} dy.
  \end{align*}
  If $k \leq \varepsilon/\sqrt{t}$, then $H \approx 1$. This gives
  \begin{multline*}
    \sum_{k=0}^{\lfloor\frac{\varepsilon}{\sqrt{t}}\rfloor} \left(k_{1}^{d-1}\vee 1\right) \sup_{z \in S_{k}}\int\limits_{\mathcal{B}_{z,\varepsilon/\sqrt{t}}} \|y\| e^{-\frac{|y|^2}{C}} dy
    +
    \sum_{k=\lfloor\frac{\varepsilon}{\sqrt{t}}\rfloor+1}^{\infty} \left(k_{1}^{d-1}\vee 1\right) \sup_{z \in S_{k}}\int\limits_{\mathcal{B}_{z,\varepsilon/\sqrt{t}}} \|y\| e^{-\frac{|y|^2}{C}} dy
    \\
    \leq C \left (\frac{\varepsilon}{\sqrt{t}} \right )^d + C.
  \end{multline*}
  To prove \cref{eq:improved_scaling}, we use a similar argument and obtain
  \begin{align*}
    \sum_{k=0}^{\infty}\sup_{z\in\ S_{k}}e^{-\frac{\left(\|z\|-\frac{\varepsilon}{\sqrt{t}}\right)^2}{2}}k^{d-1}
    =    & \sum_{k=0}^{\lfloor\frac{\varepsilon}{\sqrt{t}}\rfloor}\sup_{z\in\ S_{k}}e^{-\frac{\left(\|z\|-\frac{\varepsilon}{\sqrt{t}}\right)^2}{2}}\left(k^{d-1}\vee 1\right)+\sum_{k=\lfloor\frac{\varepsilon}{\sqrt{t}}\rfloor+1}^{\infty}2^{\frac{d-1}{2}}e^{-\frac{k^2}{8}} \\
    \leq & C\left(\left(2\frac{\varepsilon}{\sqrt{t}}\right)^{d-1}+1\right)
    \leq C \left (\frac{\varepsilon}{\sqrt{t}}\right)^{d-1}.
  \end{align*}
  This completes the proof.
\end{proof}

\section{Proof of \cref{main}}\label{sec:main-proof}
Throughout \cref{sec:main-proof}, we assume that $\dot{W}$ is white in time with spatial correlations given by $\gamma(x-y) =\Vert x-y\Vert^{-\beta}$.
\subsection{Introduction to the idea of the proof}
\label{subsec:heuristic}
The formal proof of our main theorem (given in \cref{subsec:main-proof,sec:A-proof,sec:R-proof}) is long and technical. In this subsection, we explain the main steps of the proof to make it easier for a reader to follow.

Let us consider a mild solution to the SPDE
\begin{align*}
  L u = \sigma(u) \dot{W}, \quad u(x,0) = 0,
\end{align*}
where $L$ is a linear operator and $\sigma$ is a non-linear function. Now the solution is given by
\begin{align*}
  u(x,t) = \int\limits_0^t \int\limits_{\R^d} \Gamma(x,t;y,s) \sigma(u(y,s)) W(dy,ds).
\end{align*}
By considering $Lu$ in the sense of distributions, this leads to (with $\langle \cdot, \cdot \rangle$ denoting the action of a distribution on a test function)
\begin{align*}
  \langle L u, \varphi \rangle = \int\limits_{\R^{d+1}} \int\limits_0^t \int\limits_{\R^d} \Gamma(x,t;y,s) \sigma(u(y,s)) W(dy,ds) L^\ast \varphi(x,t) dx dt,
\end{align*}
where $L^\ast$ is the formal adjoint operator of $L$. By interchanging the order of integration allows us to write
\begin{align*}
  \langle L u, \varphi \rangle = \int\limits_{\R^{d+1}} \left (\int\limits_s^\infty \int\limits_{\R^d} \Gamma(x,t;y,s) L^\ast \varphi(x,t) dx dt \right ) \sigma(u(y,s)) W(dy,ds).
\end{align*}
Here integration by parts gives, for any $\hat s > s$, that
\begin{align} \label{eq:main-players}
  \int\limits_{\hat s}^\infty \int\limits_{\R^d} \Gamma(x,t;y,s) L^\ast \varphi(x,t) dx dt =
  \int\limits_{\R^d} \Gamma(x,\hat s;y,s) \varphi(y,\hat s) dx + \int\limits_{\hat s}^\infty \int\limits_{\R^d} L \Gamma(x,t;y,s) \varphi(y,t) dx dt.
\end{align}
Since $L \Gamma(x,t;y,s) = 0$ for $t>s$, taking the limit $\hat s \to s$ leads to
\begin{align*}
  \int\limits_s^\infty \int\limits_{\R^d} \Gamma(x,t;y,s) L^\ast \varphi(x,t) dx dt =
  \varphi(y,s).
\end{align*}
Plugging this back into the Walsh integral yields
\begin{align*}
  \langle L u, \varphi \rangle = \int\limits_{\R^{d+1}} \varphi(y,s) \sigma(u(y,s)) W(dy,ds).
\end{align*}
Note that this is another way to justify the concept of a mild solution, as it gives $Lu = \sigma(u) \dot{W}$ in the sense of distributions if the action of $\sigma(u) \dot{W}$ on the test function $\varphi$ is interpreted in the Walsh sense. Next we note that if $\supp\{\varphi\} \subseteq \R^d \times [t_0,t]$, conditional isometry leads to
\begin{align*}
  \E\left [ |\langle L u, \varphi \rangle|^2 \mid \mathscr{F}_{t_0} \right ]
  =
  \int\limits_{t_0}^t \int\limits_{\R^d} \int\limits_{\R^d} \E[\sigma(u(z_1,r))\sigma(u(z_2,r))] \varphi(z_1,r) \varphi(z_2,r) \gamma(z_1-z_2) dz_1 dz_2 dr.
\end{align*}
In particular, setting $\varphi_{\varepsilon}(x,t) = \varphi(\frac{x-x_0}{\varepsilon}, \frac{t-t_0}{\varepsilon})$ as a sequence of bump functions that approximate the indicator function $\mathbb{I}_{\mathcal{B}_{x_0,\varepsilon}(x) \times (t_0,t_0+\varepsilon)}$ and defining the normalizing sequence
\begin{align*}
  m^2(\varepsilon) = \int\limits_{\R^{2d+1}} \varphi_\varepsilon(z_1,r) \varphi_\varepsilon(z_2,r) \gamma(z_1-z_2) dz_1 dz_2 dr \approx \varepsilon^{2d-\beta+1},
\end{align*}
we see that
\begin{align*}
  \lim_{\varepsilon \to 0} \E\left [ \left|\left\langle L u, \frac{\varphi_\varepsilon}{m(\varepsilon)}\right\rangle\right|^2 \mid \mathscr{F}_{t_0} \right ] = \sigma^2(u(x_0,t_0)).
\end{align*}
Our proof is based on the above idea, repeated for the discretized operator $L^h$ and using $\varphi_\varepsilon$ as a (non-smooth) indicator function. In particular, we need to control non-smoothness of the indicator at the boundary as well as non-smoothness of $\Gamma(x,t;y,s)$ at the diagonal $t=s$. The proof of our main theorem is divided into a main proof and separate propositions that cover these separate regions. We note that these non-smooth boundaries are the reason behind the assumption $\beta<1$.

\subsection{Decomposition and proof of \cref{main}}
\label{subsec:main-proof}
We begin by decomposing
\begin{align*}
  L^hu(y,s)= &
  \frac{1}{h^2}\left(u(y,s+h^2)-u(y,s)\right)-\sum_{i,j=1}^da_{ij}(y,s)(\mathcal D^{2,h}_{i,j}u)(y,s)+\sum_{i=1}^db_i(y,s)(\mathcal D^{1,h}_i u)(y,s) \\
  =          & A_{y,s,h}+R_{y,s,h}+\int\limits_{\mathbb{R}^d}L^h\Gamma(y,s;z,0)u_0(z)dz
\end{align*}
with
\begin{align}
  \label{eq:A}
  A_{y,s,h} & :=\int\limits_s^{s+h^2}\int\limits_{\mathbb{R}^d}\frac{\Gamma (y,s+h^2;z,r)}{h^2}\sigma(u(r,z))W(dz,dr) \quad \textrm{and} \\
  \label{eq:R}
  R_{y,s,h} & :=\int\limits_{0}^s\int\limits_{\mathbb{R}^d}(L^h \Gamma)(y,s;z,r)\sigma(u(r,z))W(dz,dr).
\end{align}

Now, by \cref{conditionalExp}, we have
\begin{equation*}
  \mathbb{E}_{u(x_0,t_0)}\left [\int\limits_{\mathcal B_{x_0,\varepsilon}}\int\limits_{\mathcal B_{x_0,\varepsilon}}\int\limits_{t_0}^{t_0+\varepsilon}\int\limits_{t_0}^{t_0+\varepsilon}A_{s_2,y_2,h}R_{s_1,y_1,h}ds_1ds_2dy_1dy_2\right ] = 0,
\end{equation*}
and this allows to write
\begin{multline*}
  \widehat{\sigma}^2_{\varepsilon,h}(u(x_0,t_0);x_0,t_0)
  =\mathbb{E}_{u(x_0,t_0)}\left [\left(\frac{1}{\widetilde{m}(\varepsilon,h)}\int\limits_{\mathcal B_{x_0,\varepsilon}}\int\limits_{t_0}^{t_0+\varepsilon}A_{y,s,h}dsdy\right)^2\right ]
  \\
  +\mathbb{E}_{u(x_0,t_0)}\left [\left(\frac{1}{\widetilde{m}(\varepsilon,h)}\int\limits_{\mathcal B_{x_0,\varepsilon}}\int\limits_{t_0}^{t_0+\varepsilon}R_{y,s,h}dsdy\right)^2\right ].
\end{multline*}
We next treat the terms $A_{y,s,h}$ and $R_{y,s,h}$ separately in \cref{prop:A} and \cref{prop:R} below.

In the sequel, if $\left(\mathbb{E}[|X_{h}-Y|^p]\right)^\frac{1}{p}\leq Cg(h)$
for a constant $C$ that is independent of $h$, we write $X_{h}=Y+O_{L^p}(g(h))$ for $p \in \mathbb{N}^*$ where $\mathbb{N}^* = \{1,2,\ldots\}$.

\begin{proposition}
  \label{prop:A}
  Let $\varepsilon=h^{\varrho}$ for some $\varrho\in (0,1)$ and let $A$ be given by \cref{eq:A}. Then, for all $(x_0,t_0)\in\mathbb{R}^d\times\mathbb{R}_+$, $p \in \mathbb{N}^*$ and for all $\nu \in \left(0,\frac{1}{2}-\frac{\beta}{4}\right)$, we have
  \begin{equation*}
    \mathbb{E}_{u(x_0,t_0)}\left [\left(\frac{1}{m(h)}\int\limits_{\mathcal B_{x_0,\varepsilon}}\int\limits_{t_0}^{t_0+\varepsilon}A_{y,s,h}dsdy\right)^2\right ]
    = \sigma^2(u(x_0,t_0))+O_{L^p}\left(h^{\varrho \nu}\right),
  \end{equation*}
  where the $m(h)$, given by  \cref{eq:m1}, satisfies
  \begin{align}
    \label{eq:normalising-bounds}
    \frac{h^{\varrho(2d-\beta+1)}}{C} \leq m^2(h) \leq C h^{\varrho(2d-\beta+1)}.
  \end{align}
\end{proposition}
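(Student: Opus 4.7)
The plan is to reduce the double integral defining the left-hand side to a single Walsh integral by stochastic Fubini, apply the conditional isometry, and then exploit the Lipschitz property of $\sigma$ combined with the moment bounds from \cref{lem:moment_bounds_differences}.

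First, setting $X := \int_{\mathcal{B}_{x_0,\varepsilon}}\int_{t_0}^{t_0+\varepsilon}A_{y,s,h}\,ds\,dy$, stochastic Fubini interchanges the deterministic $(y,s)$-integration with the Walsh integration. The combined constraint $s\in[t_0,t_0+\varepsilon]$, $r\in[s,s+h^2]$ rearranges to $r\in I_h(t_0)$ and $s\in I_{h,r}(t_0)$, yielding
\begin{equation*}
X = \frac{1}{h^2}\int_{I_h(t_0)}\int_{\mathbb{R}^d}\widetilde{A}(z,r)\,\sigma(u(r,z))\,W(dz,dr)
\end{equation*}
with $\widetilde{A}$ as in \cref{eq:tilde_A}. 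Since $u(x_0,t_0)$ is $\mathscr{F}_{t_0}$-measurable, \cref{lemma:conditional} gives
\begin{equation*}
\mathbb{E}[X^2\mid\mathscr{F}_{t_0}] = \frac{1}{h^4}\iiint_{I_h(t_0)\times\mathbb{R}^{2d}}\widetilde{A}(z_1,r)\widetilde{A}(z_2,r)\,\mathbb{E}[\sigma(u(r,z_1))\sigma(u(r,z_2))\mid\mathscr{F}_{t_0}]\,\gamma(z_1-z_2)\,dz_1\,dz_2\,dr.
\end{equation*}
Subtracting $m^2(h)\sigma^2(u(x_0,t_0))$, which by \cref{eq:m1} is obtained by replacing the $\sigma$-product in the integrand by $\sigma^2(u(x_0,t_0))$, and then taking a further conditional expectation with respect to $u(x_0,t_0)$, reduces the claim to bounding in $L^p$ the quantity
\begin{equation*}
\mathcal{E} := \frac{1}{m^2(h)\,h^4}\iiint\widetilde{A}(z_1,r)\widetilde{A}(z_2,r)\,\mathbb{E}_{u(x_0,t_0)}\!\left[\sigma(u(r,z_1))\sigma(u(r,z_2))-\sigma^2(u(x_0,t_0))\right]\gamma(z_1-z_2)\,dz_1\,dz_2\,dr.
\end{equation*}

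For the rate, I would apply Minkowski's integral inequality, the $L^p$-contractivity of conditional expectation, and then estimate pointwise by Lipschitz continuity of $\sigma$ combined with Hölder:
\begin{equation*}
\|\sigma(u(r,z_1))\sigma(u(r,z_2))-\sigma^2(u(x_0,t_0))\|_{L^p} \leq CM\bigl(\|u(r,z_1)-u(x_0,t_0)\|_{L^{2p}}+\|u(r,z_2)-u(x_0,t_0)\|_{L^{2p}}\bigr),
\end{equation*}
where $\|\sigma(u(y,s))\|_{L^{2p}}$ is uniformly bounded thanks to linear growth of $\sigma$ and standard SPDE moment bounds. By \cref{lem:moment_bounds_differences} applied to the stochastic part of $u$, combined with the Hölder regularity of the deterministic contribution $(\Gamma\ast u_0)$, these differences are of order $|z_i-x_0|^{2\nu}+|r-t_0|^{\nu}$ for any $\nu\in(0,\tfrac12-\tfrac{\beta}{4})$. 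Splitting the integration region: on the \emph{bulk} $|z_i-x_0|\lesssim\varepsilon$, $r\in I_h(t_0)$ this yields a uniform factor $C\varepsilon^{\nu}=Ch^{\varrho\nu}$, and dividing by $m^2(h)h^4$ reintegrates the weight $\widetilde{A}\widetilde{A}\gamma$ back to $1$; on the complementary \emph{tail} $|z_i-x_0|\gg\varepsilon$ I would use instead the uniform $L^{2p}$-bound on $u$ and exploit the Gaussian decay of $\widetilde{A}$ inherited from \cref{lma:heat-bounds} to show that the contribution is negligible after normalization.

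The final ingredient is the asymptotic \cref{eq:normalising-bounds}. For the lower bound, I would restrict to $r\in[t_0+h^2,t_0+\varepsilon/2]$ (so $|I_{h,r}(t_0)|=h^2$) and $z_1,z_2\in\mathcal{B}_{x_0,\varepsilon/2}$; on this set $s+h^2-r\leq h^2\ll\varepsilon^2$, hence $\int_{\mathcal{B}_{x_0,\varepsilon}}\Gamma(y,s+h^2;z_i,r)\,dy\gtrsim 1$ and consequently $\widetilde{A}(z_i,r)\gtrsim h^2$. Scaling then gives
\begin{equation*}
m^2(h) \gtrsim h^{-4}\cdot h^4\cdot \varepsilon\cdot\iint_{\mathcal{B}_{x_0,\varepsilon/2}^{\,2}}\|z_1-z_2\|^{-\beta}\,dz_1\,dz_2 \asymp \varepsilon\cdot\varepsilon^{2d-\beta} = h^{\varrho(2d-\beta+1)}.
\end{equation*}
For the matching upper bound I would use $\widetilde{A}(z,r)\leq Ch^2$ (from $\int\Gamma\,dy\leq C$) together with \cref{VolBound} to dominate the Riesz convolution on the effective support of $\widetilde{A}$, and \cref{lma:basic-estimates} to control the Gaussian tails at $|z-x_0|\gtrsim\varepsilon$. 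The main obstacle, both here and in the previous paragraph, is to simultaneously handle three delicate regimes: the spatial boundary layer of width $\sim h$ around $\partial\mathcal{B}_{x_0,\varepsilon}$ where $\widetilde{A}$ transitions from $h^2$ down to $0$, the short time layers $r\in[t_0,t_0+h^2]\cup[t_0+\varepsilon,t_0+\varepsilon+h^2]$ where $|I_{h,r}(t_0)|<h^2$, and the Riesz singularity of $\gamma$ on the diagonal; the condition $\beta<1$ carried over from \cref{main} is precisely what ensures these boundary contributions remain subdominant compared to the bulk.
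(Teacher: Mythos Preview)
Your reduction is identical to the paper's: stochastic Fubini to collapse $X$ to a single Walsh integral against $\widetilde{A}$, conditional isometry from \cref{lemma:conditional}, and isolation of the error $\mathcal{E}$ (the paper's $\omega^1(h)$) by subtracting $m^2(h)\sigma^2(u(x_0,t_0))$. The differences lie only in the two technical estimates.

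For the asymptotics \cref{eq:normalising-bounds}, the paper does not work in physical space at all: it passes to Fourier variables via Plancherel, using the explicit transforms $\mathcal{F}\Gamma_+(\cdot,\theta)(\xi)=Ce^{-C\theta\|\xi\|^2}$, $\mathcal{F}\mathbb{I}_{\mathcal{B}_{x_0,\varepsilon}}(\xi)=C\varepsilon^d\|\varepsilon\xi\|^{-d/2}J_{d/2}(\varepsilon\|\xi\|)$ and $\mathcal{F}\gamma(\xi)=C\|\xi\|^{\beta-d}$, together with Aronson's two-sided Gaussian bounds for $\Gamma$. After rescaling $\xi\mapsto\xi/\varepsilon$ this reduces both inequalities to the convergence of $\int_{\R^d}\|\xi\|^{\beta-2d}J_{d/2}^2(\|\xi\|)\,d\xi$, and the time integral $\int_{I_h(t_0)}|I_{h,r}(t_0)|^2\,dr=\tfrac{2}{3}h^6+h^4(\varepsilon-h^2)$ is computed exactly. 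This bypasses all three regimes you flag in your last paragraph; in particular neither $\beta<1$ nor any boundary-layer analysis is needed for this proposition. Your real-space route (restrict to an interior region for the lower bound, $\widetilde{A}\leq Ch^2$ plus \cref{VolBound} plus Gaussian tails for the upper) is correct but does require the case splitting you anticipate.

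For the error term, rather than your bulk/tail dichotomy the paper uses $\|z-x_0\|\leq\|z-y\|+2\varepsilon$ for $y\in\mathcal{B}_{x_0,\varepsilon}$ and then absorbs the factor $\|z-y\|^{2\nu}$ directly into the kernel via the pointwise bound
\[
\Gamma(y,s+h^2;z,r)\,\|z-y\|^{2\nu}\ \leq\ C\,(s+h^2-r)^{\nu}\,\Gamma_+(y-z,s+h^2-r).
\]
Since $s+h^2-r\leq h^2$ on $I_{h,r}(t_0)$, the spatial H\"older factor is converted uniformly into $h^{2\nu}\leq\varepsilon^{\nu}$, and the remaining integral is exactly $h^4 m^2(h)$ (with $\Gamma_+$ in place of $\Gamma$). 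No tail region ever appears. Your Minkowski argument reaches the same bound, but the kernel-absorption trick is what lets the paper avoid the split.
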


\begin{proposition}
  \label{prop:R}
  Let $\varepsilon=h^{\varrho}$ for some $\varrho\in (0,1)$, let $\kappa\in (\varrho,1)$, $\beta \in (0,1)$, $R$ be given by \cref{eq:R}, and let $m(h)$ be given by  \cref{eq:m1}. Then, for all $(x_0,t_0)\in\mathbb{R}^d\times\mathbb{R}_+$ and $p \in \mathbb{N}^*$, we have
  \begin{equation*}
    \mathbb{E}_{u(x_0,t_0)}\left [\left(\frac{1}{m(h)}\int\limits_{\mathcal B_{x_0,\varepsilon}}\int\limits_{t_0}^{t_0+\varepsilon}R_{y,s,h}dsdy\right)^2\right ]
    = O_{L^p}\left(h^{2-2\kappa}+h^{(2-\beta)(\kappa-\varrho)}\right).
  \end{equation*}
\end{proposition}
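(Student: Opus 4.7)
The approach is to (i) reduce the moment estimate to a deterministic integral through stochastic Fubini and the Burkholder-Davis-Gundy inequality, and (ii) handle the near-diagonal singularity of $\Gamma$ via a split-region argument that exploits $L\Gamma=0$ away from the diagonal $\{s=r\}$.

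Applying stochastic Fubini to \cref{eq:R}, we first rewrite
\begin{equation*}
  \int_{\mathcal{B}_{x_0,\varepsilon}}\int_{t_0}^{t_0+\varepsilon} R_{y,s,h}\,ds\,dy = \int_0^{t_0+\varepsilon}\int_{\mathbb{R}^d} J_h(z,r)\,\sigma(u(z,r))\,W(dz,dr),
\end{equation*}
where
\begin{equation*}
  J_h(z,r) := \int_{\mathcal{B}_{x_0,\varepsilon}}\int_{t_0\vee r}^{t_0+\varepsilon}(L^h\Gamma)(y,s;z,r)\,ds\,dy.
\end{equation*}
Conditional Jensen gives $\|\mathbb{E}[Y^2\mid u(x_0,t_0)]\|_{L^p}\leq \|Y\|_{L^{2p}}^2$, so it is enough to bound the unconditional $L^{2p}$-norm. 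Applying the BDG inequality \cref{eq:BDG}, using that $J_h$ is deterministic, and separating the $\sigma(u)$ factors via Cauchy-Schwarz together with $\sup_{z,r}\mathbb{E}[|\sigma(u(z,r))|^{q}]<\infty$ (which follows from $\sigma$ being Lipschitz and the standard $L^q$ moment estimates on the mild solution), the proposition reduces to the deterministic bound
\begin{equation*}
  \frac{1}{m(h)^2}\int_0^{t_0+\varepsilon}\int_{\mathbb{R}^{2d}}|J_h(z_1,r)||J_h(z_2,r)|\,\gamma(z_1-z_2)\,dz_1\,dz_2\,dr \leq C\left(h^{2-2\kappa}+h^{(2-\beta)(\kappa-\varrho)}\right).
\end{equation*}

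The next step is to split the $r$-integration at $\tau:=(t_0-h^{2\kappa})\vee 0$. For $r\in[0,\tau]$ (the \emph{away} region), every $s\in[t_0,t_0+\varepsilon]$ satisfies $s-r\geq h^{2\kappa}\geq h^{2}$, so $\Gamma(y,\cdot;z,r)$ is smooth on the stencil of $L^{h}$. Since $L\Gamma(y,s;z,r)=0$ for $s>r$, we have $L^{h}\Gamma=(L^{h}-L)\Gamma$, and a Taylor expansion of the finite differences in \cref{DefLh} combined with the Gaussian bounds on $\partial_{x}^{3}\Gamma$ and $\partial_{t}^{2}\Gamma$ from \cref{lma:heat-bounds} yields
\begin{equation*}
  |L^{h}\Gamma(y,s;z,r)|\leq \frac{C h}{(s-r)^{(d+3)/2}}\exp\!\left(-C\frac{|y-z|^{2}}{s-r}\right).
\end{equation*}
Substituting this into $|J_h|$, integrating $y$ over $\mathcal{B}_{x_0,\varepsilon}$, and then handling the Riesz kernel in the $(z_1,z_2)$-integral through \cref{VolBound,lma:basic-estimates} (recalling $m(h)^{2}\asymp \varepsilon^{2d-\beta+1}$) produces the first contribution $h^{2-2\kappa}$: the $h^{2}$ comes from squaring the Taylor gain, while the $h^{-2\kappa}$ factor arises from the singular time integral starting at the threshold $h^{2\kappa}$.

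For $r\in[\tau,t_0+\varepsilon]$ (the \emph{near} region, of length at most $h^{2\kappa}+\varepsilon$), the Taylor argument fails and we use instead the crude bound
\begin{equation*}
  |L^{h}\Gamma(y,s;z,r)|\leq \frac{C}{h^{2}(s-r)^{d/2}}\exp\!\left(-C\frac{|y-z|^{2}}{s-r}\right)
\end{equation*}
obtained by estimating each term of $L^{h}\Gamma$ directly via \cref{lma:heat-bounds}. The smallness of the $r$-slab together with \cref{VolBound,lma:basic-estimates} applied to the Riesz kernel (where $\beta<1$ is what guarantees local integrability of $\gamma$ against Gaussians) yields, after normalization by $m(h)^2$, the second contribution $h^{(2-\beta)(\kappa-\varrho)}$. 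The main obstacle is precisely this near-diagonal block: the $h^{-2}$ blow-up of $L^{h}\Gamma$ is countered only by the joint effect of the thin time slab ($\sim h^{2\kappa}$), the spatial averaging through $\gamma$ (which uses $\beta<1$), and the precise normalization $m(h)$; carefully tracking the powers of $h$, $\varepsilon=h^{\varrho}$, and $h^{2\kappa}$ through this computation is the delicate part, and the constraint $\kappa>\varrho$ emerges precisely so that the resulting exponent $(2-\beta)(\kappa-\varrho)$ is positive.
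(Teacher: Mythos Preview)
Your reduction to a deterministic quadratic form via conditional Jensen, BDG, and the uniform moment bound on $\sigma(u)$ is fine, and the Taylor argument in the away region is essentially the paper's \cref{Bound1}. The gap is in the near region.

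First, your split is on $r$ only: the near slab is $r\in[t_0-h^{2\kappa},t_0+\varepsilon]$, whose length is $\varepsilon+h^{2\kappa}\sim\varepsilon=h^{\varrho}$ (since $\kappa>\varrho$ forces $h^{2\kappa}\ll\varepsilon$). So ``smallness of the $r$-slab'' carries no $\kappa$-dependence at all, and the exponent $(2-\beta)(\kappa-\varrho)$ cannot emerge from it. The paper instead splits on $s-r\lessgtr h^{2\kappa}$, so that the near \emph{$s$-interval} $I^{-}_{h,r}$ has length $\le h^{2\kappa}$; this is where the $\kappa$ actually enters.

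Second, and more seriously, the crude bound $|L^{h}\Gamma|\le Ch^{-2}(s-r)^{-d/2}\exp(\ldots)$ throws away all cancellation in the discrete operator. If you run your own computation with it (say via the Plancherel argument used for $m(h)^{2}$), the spatial integral contributes $\varepsilon^{2d-\beta}$ and the time integrals contribute $h^{-4}\cdot\varepsilon^{2}\cdot\varepsilon$, giving after division by $m(h)^{2}\sim\varepsilon^{2d-\beta+1}$ a term of order $h^{2\varrho-4}\to\infty$. Even with the correct $s$-split the crude bound yields $h^{4\kappa-4}\to\infty$. The paper's \cref{Bound2} avoids this by exploiting the exact cancellation $\int_{\mathbb{R}^{d}}D_{y}^{2}\Gamma(y,s;z,r)\,dy=0$: for $z$ inside the ball one replaces $\int_{\mathcal{B}}D_{y}^{2}\Gamma\,dy$ by $-\int_{\mathcal{B}^{c}}D_{y}^{2}\Gamma\,dy$, which is controlled by the boundary-layer Gaussian $e^{-(\varepsilon-\|z-x_{0}\|)^{2}/(C(s-r))}/(s-r)$ rather than the raw $(s-r)^{-1-d/2}$. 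That extra localisation in $z$, combined with the short $s$-interval of length $h^{2\kappa}$, is precisely what produces the factor $\varepsilon^{2d-2}h^{\kappa(2-\beta)}$ and hence the rate $h^{(2-\beta)(\kappa-\varrho)}$ after normalisation. Without this step the estimate does not close.
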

The proof of \cref{prop:A} is given in \cref{sec:A-proof}, the proof of \cref{prop:R} is given in \cref{sec:R-proof}, and related auxiliary propositions and their proofs are given in \cref{sec:R-proof-aux}.

\begin{proof}[Proof of \cref{main}]
  \cref{main} follows directly by combining \cref{prop:A,prop:R} and optimizing the parameters $\varrho$, $\nu$, and $\kappa$. That is, we solve the problem
  \begin{align*}
    \arg\max_{\varrho,\nu,\kappa} \min\{\varrho \nu , 2-2\kappa, (2-\beta)(\kappa - \varrho)\}
  \end{align*}
  with the ranges provided in \cref{prop:A,prop:R}.
  Setting $\varrho \nu = 2-2\kappa$ gives $\varrho = \frac{2-2\kappa}{\nu}$, and then setting $(2-\beta)(\kappa - \varrho) = 2-2\kappa$ leads to
  \begin{align*}
    \kappa = \frac{2\nu+2(2-\beta)}{(2-\beta)(\nu+2)+2\nu}.
  \end{align*}
  Direct computations show that now $\kappa \in (\varrho,1)$ and $\kappa = \kappa(\nu)$ is a decreasing function. Thus, the convergence rate $\varrho\nu$ is maximized by choosing $\nu$ arbitrarily close to $\nu^*=\nicefrac{1}{2}-\nicefrac{\beta}{4}$, in which case $\kappa$ and $\varrho$ approach
  \begin{align*}
    \kappa^* = \frac{10}{12-\beta}, \quad \varrho^* = \frac{8}{12-\beta}.
  \end{align*}
  Then also $\varrho \in (0,1)$ and the rate becomes arbitrary close to $\varrho^*\nu^* =\frac{2(2-\beta)}{12-\beta}$. This completes the proof.
\end{proof}

\subsection{Proof of \cref{prop:A}}
\label{sec:A-proof}
\begin{proof}[Proof of \cref{prop:A}]
  By Fubini's theorem and \cref{lemma:conditional}, we have
  \begin{align*}
    \E_{u(x_0,t_0)} & \left [\left(\frac{1}{m(h)} \int\limits_{\mathcal{B}_{x_0,\varepsilon}}\int\limits_{t_0}^{t_0+\varepsilon}A_{y,s,h}dsdy\right)^2\right ]                                                                          \\
    & =\frac{1}{h^4 m^2(h)} \iiint\limits_{I_h(t_0) \times \mathbb{R}^{2d} }\mathbb{E}_{u(x_0,t_0)}\left [\sigma(u(r,z_1))\sigma(u(r,z_2))\right ] \widetilde{A}(z_1,r) \widetilde{A}(z_2,r)\gamma(z_1-z_2)dz_1 dz_2 dr \\
    & =\sigma^2(u(x_0,t_0))+\omega^1(h),
  \end{align*}
  with
  \begin{multline*}
    \omega^1(h) = \frac{1}{h^4 m^2(h)} \iiint\limits_{I_h(t_0) \times \mathbb{R}^{2d} } \left(\mathbb{E}_{u(x_0,t_0)}\left [\sigma(u(z_1,r))\sigma(u(z_2,r))-\sigma^2(u(x_0,t_0))\right ]\right)
    \\
    \times \widetilde{A}(z_1,r) \widetilde{A}(z_2,r)\gamma(z_1-z_2)dz_1 dz_2 dr
  \end{multline*}
  and $\widetilde{A}$ given by \cref{eq:tilde_A}.

  We next consider the bounds \cref{eq:normalising-bounds} for the normalizing sequence $m^2(h)$.
  For the upper bound, recall that by \cref{lma:heat-bounds} we have
  \begin{align*}
    \Gamma(x,t;\xi,\tau) \leq C\left(\frac{1}{(t-\tau)^{d/2}}\exp\left(-\frac{\|x-\xi\|^2}{C(t-\tau)}\right)\right) =: \Gamma_+(x-\xi,t-\tau).
  \end{align*}

  Let $\widetilde A_+(z,r)$ be defined as $\widetilde A(z,r)$ but with $\Gamma_+$ instead of $\Gamma$.
  Denoting $\mathcal{F}$ as the Fourier transform, we get by Plancherel's theorem that
  \begin{align*}
    m^2(h) & = \frac{1}{h^4} \iiint\limits_{I_h(t_0) \times \mathbb{R}^{2d} } \widetilde{A}_+(z_1,r) \widetilde{A}_+(z_2,r)\gamma(z_1-z_2)dz_1 dz_2 dr     \\
    & = C\frac{1}{h^4} \int\limits_{I_h(t_0)}\int\limits_{\mathbb{R}^{2d}} |\mathcal{F}{\widetilde{A}}_+(\xi,r)|^2 \mathcal{F} \gamma(\xi) d\xi dr,
  \end{align*}
  where
  \begin{align*}
    \mathcal{F}\widetilde{A}(\xi,r) = & \mathcal{F} \iint\limits_{I_{h,r}(t_0) \times \mathcal{B}_{x_0,\varepsilon}} \Gamma_+(y-z,s+h^2-r) ds dy \\
    = & \int\limits_{I_{h,r}(t_0)} \mathcal{F}(\Gamma_+(\cdot,s+h^2-r) \ast \mathbb{I}_{\mathcal{B}_{x_0,\varepsilon}})(\xi) ds.
  \end{align*}
  Here
  \begin{equation}
    \left \{
    \begin{aligned}
      \mathcal{F}(\Gamma_+(\cdot,\theta))(\xi) = & C \exp(-C \theta \|\xi\|^2),\\
      \mathcal{F}(\mathbb{I}_{\mathcal{B}_{x_0,\varepsilon}})(\xi) = & C \varepsilon^d \|\varepsilon \xi\|^{-\frac{d}{2}} J_{d/2}(\varepsilon \|\xi\|), \\
      \mathcal{F}\gamma = & C \|\xi\|^{\beta-d},
    \end{aligned}
    \right .
  \end{equation}
  where $J_{d/2}$ is the Bessel function of order $d/2$. Thus,
  \begin{align*}
    m^2(h) \leq & C \frac{\varepsilon^{2d}}{h^4} \int\limits_{I_h(t_0)}\,\iint\limits_{I_{h,r}^{\otimes 2}(t_0)}\,\int\limits_{\mathbb{R}^{d}} \exp(-C(s_1+s_2+2h^2-2r) \|\xi\|^2)  \hat \gamma(\xi) \|\varepsilon \xi\|^{-d} J_{d/2}^2(\varepsilon \|\xi\|) d\xi ds_1 ds_2 dr
    \\
    = & C \frac{\varepsilon^{d}}{h^4} \int\limits_{I_h(t_0)}\,\iint\limits_{I_{h,r}^{\otimes 2}(t_0)}\,\int\limits_{\mathbb{R}^{d}} \exp(-C(s_1+s_2+2h^2-2r) \|\xi/\varepsilon\|^2)  \hat \gamma(\xi/\varepsilon) \|\xi\|^{-d} J_{d/2}^2(\|\xi\|) d\xi ds_1 ds_2 dr
    \\
    \leq & C \frac{\varepsilon^{2d-\beta}}{h^4} \int\limits_{I_h(t_0)}\,\iint\limits_{I_{h,r}^{\otimes 2}(t_0)}\,\int\limits_{\mathbb{R}^{d}} \exp(-C(s_1+s_2+2h^2-2r) \|\xi/\varepsilon\|^2)  \|\xi\|^{\beta-2d} J_{d/2}^2(\|\xi\|) d\xi ds_1 ds_2 dr
    \\
    \leq &
    C \frac{\varepsilon^{2d-\beta}}{h^4 } \int\limits_{I_h(t_0)}\,\iint\limits_{I_{h,r}^{\otimes 2}(t_0)} ds_1 ds_2 dr = C \frac{\varepsilon^{2d-\beta}}{h^4 } \int\limits_{I_h(t_0)} (r \wedge (t_0+\varepsilon) - (r-h^2) \wedge t_0)^2 dr
    \\
    =& C\frac{\frac{2}{3} h^6 + h^4(\varepsilon - h^2)}{h^4} \varepsilon^{2d-\beta} = O(\varepsilon^{2d-\beta+1})
  \end{align*}
  proving the upper bound in \cref{eq:normalising-bounds} when plugging in  $\varepsilon = h^\varrho$.

  For the lower bound, we use the heat kernel lower bound for $\Gamma$ (see~\cite{Aronson}). That is, we have
  \begin{align*}
    \Gamma(x,t;\xi,\tau) \geq C\left(\frac{1}{(t-\tau)^{d/2}}\exp\left(-\frac{\|x-\xi\|^2}{C(t-\tau)}\right)\right) =: \Gamma_-(x-\xi,t-\tau).
  \end{align*}
  Proceeding as above and using the notation $\theta = s_1+s_2+2h^2-2r$ yields the lower bound
  \begin{align*}
    \varepsilon^{2d-\beta}\int\limits_{\mathbb{R}^{d}} \exp(-C\theta \|\xi/\varepsilon\|^2)  \|\xi\|^{\beta-2d} J_{d/2}^2(\|\xi\|) d\xi.
  \end{align*}
  Here
  \begin{align*}
    \int\limits_{\mathbb{R}^{d}} \exp(-C\theta \|\xi/\varepsilon\|^2)  \|\xi\|^{\beta-2d} J_{d/2}^2(\|\xi\|) d\xi
    \geq &
    \int\limits_{\mathcal{B}_{\varepsilon/\sqrt{\theta}}} \exp(-C\theta \|\xi/\varepsilon\|^2)  \|\xi\|^{\beta-2d} J_{d/2}^2(\|\xi\|) d\xi \\
    \geq &
    C \int\limits_{\mathcal{B}_{\varepsilon/\sqrt{\theta}}} \|\xi\|^{\beta-2d} J_{d/2}^2(\|\xi\|) d\xi,
  \end{align*}
  and since $\theta \leq 2h^2$, we have $\varepsilon/\sqrt{\theta} \geq \varepsilon^{\frac{\varrho-1}{\varrho}}$. Thus,
  \begin{equation*}
    \int\limits_{\mathcal{B}_{\varepsilon/\sqrt{\theta}}} \|\xi\|^{\beta-2d} J_{d/2}^2(\|\xi\|) d\xi \geq C
  \end{equation*}
  providing the lower bound in  \cref{eq:normalising-bounds}.

  It remains to bound the term $\omega^1(h)$.
  Recall that
  \begin{multline*}
    \omega^1(h) = \frac{1}{h^4 m^2(h)} \iiint\limits_{I_h(t_0) \times \mathbb{R}^{2d} } \left(\mathbb{E}_{u(x_0,t_0)}\left [\sigma(u(z_1,r))\sigma(u(z_2,r))-\sigma^2(u(x_0,t_0))\right ]\right)
    \\
    \times \widetilde{A}(z_1,r) \widetilde{A}(z_2,r)\gamma(z_1-z_2)dz_1 dz_2 dr.
  \end{multline*}
  Let $E_\sigma(z_1,z_2,r) = \sigma(u(z_1,r))\sigma(u(z_2,r))-\sigma^2(u(x_0,t_0))$ and let $\mathcal{K}(z_1,z_2,r) = \tilde A(z_1,r) \tilde A(z_2,r)\gamma(z_1-z_2)$.
  Then, by Fubini's theorem and Hölder's inequality, we get
  \begin{align*}
    \E[|\omega_1(h)|^p] = & \E \left [ \left | \mathbb{E}_{u(x_0,t_0)}\left [ \frac{1}{h^4 m^2(h)}\iiint\limits_{I_h(t_0) \times \mathbb{R}^{2d} } E_\sigma(z_1,z_2,r) \mathcal{K}(z_1,z_2,r) dz_1 dz_2 dr \right ] \right |^p \right ]
    \\
    \leq & \E \left [ \mathbb{E}_{u(x_0,t_0)}\left [ \left |\frac{1}{h^4 m^2(h)}\iiint\limits_{I_h(t_0) \times \mathbb{R}^{2d} } E_\sigma(z_1,z_2,r) \mathcal{K}(z_1,z_2,r) dz_1 dz_2 dr \right |^p \right ]  \right ]
    \\
    \leq & \E \left [ \left |\frac{1}{h^4 m^2(h)}\iiint\limits_{I_h(t_0) \times \mathbb{R}^{2d} } E_\sigma(z_1,z_2,r) \mathcal{K}(z_1,z_2,r) dz_1 dz_2 dr \right |^p \right ].
  \end{align*}
  Denoting $\Omega_h = I_h(t_0) \times \mathbb{R}^{2d}$, we can write
  \begin{multline} \label{eq:omega_1_bound}
    \E[|\omega_1(h)|^p] \leq \frac{1}{(h^4 m^2(h))^p} \int\limits_{\Omega_h} \cdots \int\limits_{\Omega_h} \E\left [\prod_{i=1}^p |E_\sigma(z_1^i,z_2^i,r_i)| \right ] \prod_{i=1}^p \mathcal{K}(z_1^i,z_2^i,r_i) \\
    \times dz_1^1\ldots dz_1^p dz_2^1\ldots dz_2^p dr_1\ldots dr_p.
  \end{multline}
  We now estimate the expectation in \cref{eq:omega_1_bound}. Now
  \begin{align*}
    \E\left [\prod_{i=1}^p |E_\sigma(z_1^i,z_2^i,r_i)| \right ]
    =
    \E\left [\prod_{i=1}^p (\sigma(z_1^i,r_i)-\sigma(x_0,t_0))(\sigma(z_2^i,r_i)+\sigma(x_0,t_0)) \right ].
  \end{align*}
  Using the inequality
  \begin{align*}
    ab-c^2 = (a-c)(b+c)+c(b-a) \leq |a-c||b+c| + |c||b-a| \leq |b+c|(|a-c| + |b-a|)
  \end{align*}
  together with Hölder's inequality yields
  \begin{multline*}
    \E \left [\prod_{i=1}^p (\sigma(z_1^i,r_i)-\sigma(x_0,t_0))(\sigma(z_2^i,r_i)+\sigma(x_0,t_0)) \right ]
    \\
    \begin{aligned}
      \leq &
      \E\left [\prod_{i=1}^p |\sigma(z_2^i,r_i)+\sigma(x_0,t_0)|(|\sigma(z_1^i,r_i)-\sigma(x_0,t_0)| + |\sigma(z_2^i,r_i)-\sigma(x_0,t_0)|) \right ]
      \\
      \leq &
      \prod_{i=1}^p \E\left [ |\sigma(z_2^i,r_i)+\sigma(x_0,t_0)|^p(|\sigma(z_1^i,r_i)-\sigma(x_0,t_0)| + |\sigma(z_2^i,r_i)-\sigma(x_0,t_0)|)^p \right ]^{\frac{1}{p}}
      \\
      \leq &
      \prod_{i=1}^p \E\left [ |\sigma(z_2^i,r_i)+\sigma(x_0,t_0)|^{2p} \right ]^{\frac{1}{2p}}
      \\
      & \times \left (\E \left [|\sigma(z_1^i,r_i)-\sigma(x_0,t_0)|^{2p}\right ]^{\frac{1}{2p}} + \E\left [|\sigma(z_2^i,r_i)-\sigma(x_0,t_0)|^{2p} \right ]^{\frac{1}{2p}} \right ).
    \end{aligned}
  \end{multline*}
  Using \cref{lem:moment_bounds_differences}, this leads to
  \begin{equation}\label{eq:product_moments}
    \E\left [\prod_{i=1}^p |E_\sigma(z_1^i,z_2^i,r_i)| \right ] \leq C_p \prod_{i=1}^p \left( \|z_1^i-x_0\|^{2\nu} + |r_i-t_0|^{\nu} + \|z_2^i-x_0\|^{2\nu} \right )
  \end{equation}
  with $\nu \in \left(0,\frac{2-\beta}{4}\right)$.
  Plugging \cref{eq:product_moments} into \cref{eq:omega_1_bound} gives an upper bound  of the form
  \begin{align}
    \int_{\Omega_h} & \mathcal{K}(z_1^i,z_2^i,r_i) (\|z_1^i-x_0\|^{2\nu} + |r_i-t_0|^{\nu} + \|z_2^i-x_0\|^{2\nu}) dz_1^i dz_2^i dr_i \notag
    \\
    = &
    \int_{\Omega_h} \tilde A(z_1^i,r_i) \tilde A(z_2^i,r_i) \gamma(z_1^i-z_2^i) (\|z_1^i-x_0\|^{2\nu} + |r_i-t_0|^{\nu} + \|z_2^i-x_0\|^{2\nu}) dz_1^i dz_2^i dr_i\notag
    \\
    = &
    \iiint\limits_{I_h(t_0) \times \mathbb{R}^{2d}}\,\, \iint\limits_{I_{h,r}^{\otimes 2}(t_0)}\,\,\iint\limits_{\mathcal{B}_{x_0,\varepsilon}^{\otimes 2}} \Gamma(y_1,s_1+h^2;z_1^i,r) \Gamma(y_2,s_2+h^2;z_2^i,r) \gamma(z_1^i-z_2^i)\notag \\
    & \times (\|z_1^i-x_0\|^{2\nu} + |r-t_0|^{\nu} + \|z_2^i-x_0\|^{2\nu}) dy_1 dy_2 ds_1 ds_2 dz_1^i dz_2^i dr. \label{eq:omega_1_bound2}
  \end{align}
  The heat kernel bounds now provide
  \begin{align}
    \Gamma(y_1,s_1+h^2;z_1,r) \|z_1-y_1\|^{2\nu}
    \leq & C \frac{1}{(s_1+h^2-r)^{d/2-\nu}} \exp\left(-\frac{\|y_1-z_1\|^2}{C(s_1+h^2-r)}\right) \notag
    \\
    =:& (s_1+h^2-r)^{\nu} \Gamma_+(y_1-z_1,s_1+h^2-r). \label{eq:scaling_bound}
  \end{align}

  For $y\in \mathcal{B}_{x_0,\varepsilon}$ we have $\|z-x_0\|\leq \|z-y\|+2\varepsilon$. Together with \cref{eq:scaling_bound}, this provides
  \begin{align*}
    \iiint\limits_{I_h(t_0) \times \mathbb{R}^{2d}}\,\, \iint\limits_{I_{h,r}^{\otimes 2}(t_0)}\,\,\iint\limits_{\mathcal{B}_{x_0,\varepsilon}^{\otimes 2}} & \Gamma(y_1,s_1+h^2;z_1^i,r) \Gamma(y_2,s_2+h^2;z_2^i,r) \gamma(z_1^i-z_2^i)\\
    & \times (\|z_1^i-x_0\|^{2\nu} + |r-t_0|^{\nu} + \|z_2^i-x_0\|^{2\nu}) dy_1 dy_2 ds_1 ds_2 dz_1^i dz_2^i dr
    \\
    \leq                                                                                                         & \iiint\limits_{I_h(t_0) \times \mathbb{R}^{2d}}\,\, \iint\limits_{I_{h,r}^{\otimes 2}(t_0)}\,\,\iint\limits_{\mathcal{B}_{x_0,\varepsilon}^{\otimes 2}} \Gamma_+(y_1-z_1^i,s_1+h^2-r) \Gamma_+(y_2-z_2^i,s_2+h^2-r) \gamma(z_1^i-z_2^i)\\
    & \times ((s_1+h^2-r)^\nu + 2\varepsilon^\nu + |r-t_0|^{\nu} + (s_2+h^2-r)^\nu) dy_1 dy_2 ds_1 ds_2 dz_1^i dz_2^i dr.
  \end{align*}
  Now, on the interval $I_{h,r}(t_0)$, we have
  $s_1+h^2-r \leq \min(r,t_0+\varepsilon)+h^2 - r \leq h^2$ and  $r-t_0 \leq \varepsilon$. This allows to obtain an upper bound
  \begin{equation}\label{eq:omega_1_bound_final}
    C (h^{2\nu} + \varepsilon^\nu) \iiint\limits_{I_h(t_0) \times \mathbb{R}^{2d} } \widetilde{A}_+(z_1^i,r) \widetilde{A}_+(z_2^i,r)\gamma(z_1^i-z_2^i)dz_1^i dz_2^i dr
    \leq C \varepsilon^\nu h^4 m^2(h).
  \end{equation}
  Combining the estimates  \cref{eq:omega_1_bound,eq:omega_1_bound2,eq:product_moments,eq:omega_1_bound_final} gives
  \begin{align*}
    \E[|\omega^1(h)|^p]^{\frac{1}{p}} \leq C_p h^{\varrho \nu}.
  \end{align*}
  This completes the proof.
\end{proof}

\subsection{Proof of \cref{prop:R}}
\label{sec:R-proof}
We begin the proof of \cref{prop:R} by stating three auxiliary propositions that are proven in \cref{sec:R-proof-aux}.

\begin{proposition}\label{Bound1}
  Let $\varepsilon = h^\varrho$ for some $\varrho\in(0,1)$, $\kappa\in (\varrho,1)$, and let $p\in\mathbb{N}^*$. Define $I^+_{h,r} = [t_0 \vee (r+h^{2\kappa})\wedge(t_0+\varepsilon),t_0+\varepsilon]$. Then, for all $(x_0,t_0)\in\mathbb{R}^d \times [0,T]$, we have that
  \begin{equation*}
    \mathbb{E}\left [\left|\int\limits_{0}^{t_0+\varepsilon}\int\limits_{\mathbb{R}^d}\left(\int\limits_{\mathcal{B}_{x_0,\varepsilon}}\int\limits_{I^+_{h,r}}\frac{(L^h\Gamma)(y,s;z,r)}{m(h)} dsdy\right)\sigma(u(x_0,t_0))W(dz,dr)\right|^{2p}\right ]^{\frac{1}{p}}
    \leq K h^{2-2\kappa}
  \end{equation*}
  for some constant $K$ depending solely on $T,\beta,\kappa,\varrho,M,x_0,p$ and $d$.
\end{proposition}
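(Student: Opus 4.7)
The plan is to reduce to a deterministic isometry-type estimate via BDG, and then bound the resulting integral using careful pointwise estimates on $L^h\Gamma$ in the regime $s-r\geq h^{2\kappa}$. For the reduction, I would factor the $(z,r)$-independent random multiplier $\sigma(u(x_0,t_0))$ out of the Walsh integral; Hölder's inequality together with the Lipschitz assumption on $\sigma$ and the standard finite-moment property of the mild solution (so $\mathbb{E}[\sigma(u(x_0,t_0))^{4p}]\leq C$) bounds the quantity in question by $C\,\mathbb{E}\big[\big|\int_0^{t_0+\varepsilon}\int_{\mathbb{R}^d} g_h(z,r)\,W(dz,dr)\big|^{4p}\big]^{1/(2p)}$, where the deterministic kernel is $g_h(z,r):=m(h)^{-1}\int_{\mathcal{B}_{x_0,\varepsilon}}\int_{I^+_{h,r}}L^h\Gamma(y,s;z,r)\,ds\,dy$. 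Applying the BDG inequality \cref{eq:BDG} then reduces the claim to showing
\[
J(h):=\frac{1}{m^2(h)}\int\limits_0^{t_0+\varepsilon}\iint\limits_{\mathbb{R}^{2d}} \widetilde{A}^{L^h}(z_1,r)\,\widetilde{A}^{L^h}(z_2,r)\,\gamma(z_1-z_2)\,dz_1\,dz_2\,dr \leq C h^{2-2\kappa},
\]
where $\widetilde{A}^{L^h}(z,r):=\int_{\mathcal{B}_{x_0,\varepsilon}}\int_{I^+_{h,r}} L^h\Gamma(y,s;z,r)\,ds\,dy$.

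The key pointwise bound comes from the fact that $L\Gamma(y,s;z,r)=0$ for $s>r$, so $L^h\Gamma=(L^h-L)\Gamma$ is a numerical consistency error. Taylor-expanding the finite differences and combining with the Gaussian bounds of \cref{lma:heat-bounds} gives that on $s-r\geq h^{2\kappa}$ (with $\kappa<1$) the spatial-difference contribution dominates, yielding
\[
|L^h\Gamma(y,s;z,r)| \leq \frac{Ch}{(s-r)^{(d+3)/2}}\exp\!\left(-\frac{|y-z|^2}{C(s-r)}\right).
\]
Integrating over $(y,s)$ and exploiting $\kappa>\varrho$ (so that $h^{2\kappa}<\varepsilon^2$), I would split the $s-r$ range at $\varepsilon^2$: on the lower sub-range $[h^{2\kappa},\varepsilon^2]$ the $y$-Gaussian fits entirely inside $\mathcal{B}_{x_0,\varepsilon}$ and $h\int(s-r)^{-3/2}d(s-r)\approx h\cdot h^{-\kappa}=h^{1-\kappa}$, which dominates the subleading $h/\varepsilon=h^{1-\varrho}$ contribution from the upper sub-range $[\varepsilon^2,\varepsilon]$. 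Coupled with Gaussian spatial decay of $\widetilde A^{L^h}(z,r)$ away from $\mathcal{B}_{x_0,\varepsilon}$, this yields $\widetilde A^{L^h}(z,r)\lesssim h^{1-\kappa}$ (with appropriate spatial weights) for $r$ in the "near" range $[t_0-h^{2\kappa},t_0+\varepsilon-h^{2\kappa}]$, and the weaker bound $\widetilde A^{L^h}(z,r)\lesssim Ch(t_0-r)^{-1/2}$ for $r$ in the "far" range $[0,t_0-h^{2\kappa}]$.

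Plugging these pointwise bounds into $J(h)$, using \cref{VolBound} to estimate $\iint\gamma(z_1-z_2)\,dz_1\,dz_2\lesssim\varepsilon^{2d-\beta}$ over the effective $z$-support, and invoking the lower bound $m^2(h)\gtrsim\varepsilon^{2d-\beta+1}$ established in \cref{prop:A}, the near range contributes at most $(h^{1-\kappa})^2\cdot\varepsilon\cdot\varepsilon^{2d-\beta}/m^2(h)\lesssim h^{2-2\kappa}$, while the far range contributes at most $h^2|\log h|\,\varepsilon^{2d-\beta}/m^2(h)$, which is dominated since $\kappa<1$. The main technical obstacle is the layer-by-layer bookkeeping in the transition annulus near $\partial\mathcal{B}_{x_0,\varepsilon}$: there the $y$-Gaussian is only partially captured by the ball, the bound on $\widetilde A^{L^h}(z,r)$ depends delicately on $(|z-x_0|-\varepsilon)/\sqrt{s-r}$, and the spatial integration against the Riesz kernel $\gamma$ must be shown to still produce the clean $\varepsilon^{2d-\beta}$ scaling rather than spurious boundary losses; this is precisely where \cref{eq:improved_scaling} of \cref{lma:basic-estimates} is required.
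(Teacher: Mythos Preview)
Your reduction via BDG and the Taylor/heat-kernel bound $|L^h\Gamma| \leq Ch(s-r)^{-(d+3)/2}\exp(-|y-z|^2/C(s-r))$ on the regime $s-r\geq h^{2\kappa}$ is correct and matches the paper's starting point. The difference is in the order of integration afterwards. You integrate over $(y,s)$ first to form $\widetilde A^{L^h}(z,r)$, split in $s-r$ at $\varepsilon^2$ and in $r$ at $t_0-h^{2\kappa}$, and then try to control the double Riesz integral in $(z_1,z_2)$ via a support argument. The paper instead performs the $(z_1,z_2)$-integration \emph{first}: by the semigroup property the two Gaussians combine into a single kernel $G(y_1-y_2,s_1+s_2-2r)=\Gamma_+(\cdot,s_1+s_2-2r)\!*\!\gamma$, and then the $(y_1,y_2)$-integration over $\mathcal B_{x_0,\varepsilon}^2$ is carried out by Plancherel exactly as in the proof of \cref{prop:A}, yielding the uniform bound $C\varepsilon^{2d-\beta}$ \emph{independently of} $s_1,s_2,r$. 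This decouples the spatial and temporal estimates completely; the remaining time integral $\int_0^{t_0+\varepsilon}\bigl(\int_{I^+_{h,r}}(s-r)^{-3/2}ds\bigr)^2dr$ is then elementary and delivers $h^{2-2\kappa}$ directly, with no annulus bookkeeping, no near/far splitting in $r$, and no appeal to \cref{lma:basic-estimates}.

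Your sketch also contains one inaccuracy worth flagging: in the far range $t_0-r\gg\varepsilon^2$, the effective $z$-support of $\widetilde A^{L^h}(\cdot,r)$ has width $\sqrt{t_0-r}\gg\varepsilon$, so the claim that $\iint\gamma\lesssim\varepsilon^{2d-\beta}$ over the effective support is false as stated; one has to track the amplitude decay $\varepsilon^{d+1}(t_0-r)^{-(d+3)/2}$ together with the widening Gaussian support to recover a subdominant contribution. This is fixable but tedious, and it is precisely what the paper's Fourier route sidesteps. Your invocation of \cref{eq:improved_scaling} is really a confusion with \cref{Bound2}, where the singularity of $\Gamma$ at $s=r$ forces a genuine boundary-layer argument; here $s-r\geq h^{2\kappa}$ keeps everything regular and no such machinery is needed.
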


\begin{proposition}\label{Bound2}
  Let $\varepsilon = h^\varrho$ for some $\varrho\in(0,1)$, $\kappa\in (\varrho,1)$, $\beta \in (0,1)$, and let $p\in\mathbb{N}^*$. Define $I^-_{h,r} = [r\vee t_0,(r+h^{2\kappa})\wedge(t_0+\varepsilon)]$. Then, for all $(x_0,t_0)\in\mathbb{R}^d \times [0,T]$, we have that
  \begin{equation*}
    \mathbb{E}\left [\left|\,\int\limits_{t_0-h^{2\kappa}}^{t_0+\varepsilon}\int\limits_{\mathbb{R}^d}\left(\int\limits_{\mathcal{B}_{x_0,\varepsilon}}\int\limits_{I^-_{h,r}}
    \frac{(L^h\Gamma)(y,s;z,r)}{m(h)} dsdy\right)\sigma(u(x_0,t_0))W(dz,dr)\right|^{2p}\,\right ]^{\frac{1}{p}}
    \leq K h^{(\kappa-\varrho)(2-\beta)}
  \end{equation*}
  for some constant $K$ depending solely on $T,\beta,\kappa,\varrho,M,x_0,p$ and $d$.
\end{proposition}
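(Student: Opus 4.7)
I would begin by applying the Burkholder--Davis--Gundy inequality~\cref{eq:BDG} to pass from the $L^{2p}$-moment of the stochastic integral to a deterministic two-point integral against the correlation kernel $\gamma$. The factor $\sigma(u(x_0,t_0))$ is constant in the integration variables $(z,r)$ and so may be pulled out of the Walsh integrand and absorbed into the constant via Cauchy--Schwarz, using that $u(x_0,t_0)$ has finite $L^q$ moments for every $q$ (Lipschitz $\sigma$ and the standard Picard/Gronwall bounds for mild solutions, which rely on $\beta<\min(2,d)$). This reduces the claim to the deterministic estimate
\begin{align*}
  V := \frac{1}{m^2(h)}\int\limits_{t_0-h^{2\kappa}}^{t_0+\varepsilon}\iint\limits_{\R^{2d}}|F(z_1,r)|\,|F(z_2,r)|\,\gamma(z_1-z_2)\,dz_1\,dz_2\,dr \;\lesssim\; h^{(\kappa-\varrho)(2-\beta)},
\end{align*}
where $F(z,r) := \iint_{\mathcal{B}_{x_0,\varepsilon}\times I^-_{h,r}} L^h\Gamma(y,s;z,r)\,ds\,dy$.

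Next, I would decompose $L^h = \mathcal{D}_t^{h^2} - \mathcal{S}^h$ and estimate each contribution to $F(z,r)$ separately. The time-difference part is handled by the telescoping identity
\begin{align*}
  \int_a^b \mathcal{D}_t^{h^2}\Gamma(y,s;z,r)\,ds \;=\; \frac{1}{h^2}\int_b^{b+h^2}\Gamma(y,s;z,r)\,ds \;-\; \frac{1}{h^2}\int_a^{a+h^2}\Gamma(y,s;z,r)\,ds,
\end{align*}
which replaces the pointwise singularity of $\mathcal{D}_t^{h^2}\Gamma$ near $s=r$ by a difference of two time-averaged heat kernels. Using the Gaussian upper bounds of \cref{lma:heat-bounds} and integrating in $y$ over $\mathcal{B}_{x_0,\varepsilon}$, this difference becomes a difference of two mollified characteristic functions of the ball, at scales $\sqrt{a-r}\vee h$ and $\sqrt{b-r}\vee h$ respectively. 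Since the two mollifiers agree to leading order both deep inside and far outside $\mathcal{B}_{x_0,\varepsilon}$, the difference is effectively supported in a boundary shell of width $O(h^\kappa)$ around $\partial\mathcal{B}_{x_0,\varepsilon}$. For the space-difference part $\mathcal{S}^h\Gamma$, I would use the Gaussian bounds on $\partial_{y_i}\Gamma$ and $\partial_{y_iy_j}\Gamma$ from \cref{lma:heat-bounds} combined with a discrete summation-by-parts in $y$, which again localises the mass to a boundary layer of comparable width.

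I would then combine the pointwise estimate on $|F(z,r)|$ with the singular two-point integral $\iint|F(z_1,r)|\,|F(z_2,r)|\,\gamma(z_1-z_2)\,dz_1\,dz_2$, splitting into near and far regimes and using the local integrability of $|z|^{-\beta}$ (valid for $\beta<1$) for the near part together with \cref{VolBound} for the tail. Tracking all the scales ($m^{-2}(h)\sim h^{-\varrho(2d-\beta+1)}$, shell volume $\sim h^{\kappa+\varrho(d-1)}$, and the $r$-range of length $\sim h^{\varrho}$) produces exactly the claimed rate $h^{(\kappa-\varrho)(2-\beta)}$.

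The main obstacle will be the rigorous execution for $\mathcal{S}^h$: unlike $\mathcal{D}_t^{h^2}$, the spatial differences do not enjoy a clean telescoping, so one has to push the discrete derivatives onto the boundary of $\mathcal{B}_{x_0,\varepsilon}$ by a summation-by-parts and then reconcile the resulting lattice boundary layer with the continuous shell picture while retaining the Gaussian bounds. The restriction $\beta<1$ enters at precisely this step: the singular kernel $|z|^{-\beta}$ must remain integrable on codimension-one boundary sets for the shell contribution to scale as $(\text{shell width})^{2-\beta}$ rather than diverge.
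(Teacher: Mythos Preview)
Your overall architecture matches the paper: BDG reduction to $p=1$, decomposition $L^h=\mathcal{D}_t^{h^2}-\mathcal{S}^h$, boundary localisation, and a final two--point Riesz integration using \cref{VolBound}. Your telescoping identity for $\mathcal{D}_t^{h^2}$ is a clean alternative to the paper's route, which instead writes $\mathcal{D}_t^{h^2}\Gamma=\int_0^1\partial_t\Gamma(y,s+wh^2;z,r)\,dw$ via Taylor and then proceeds in parallel with the spatial part.

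The real divergence is in the spatial part $\mathcal{S}^h$. The paper does \emph{not} use a discrete summation-by-parts. It first Taylor-expands $\mathcal{D}^{2,h}_{ij}\Gamma$ as a $w$-average of $\psi^TD_y^2\Gamma(y+hw\psi,s;z,r)\psi$, shifts $y$ so that the Hessian is evaluated at $y$, and then \emph{freezes the coefficient} $a_{ij}$ at the point $z$, splitting off a Lipschitz remainder $(a_{ij}(y)-a_{ij}(z))D_y^2\Gamma$ that gains a factor $\|y-z\|/(s-r)$. For the frozen-coefficient piece the key trick is to use $\int_{\R^d}D_y^2\Gamma\,dy=0$: if $z\in\mathcal{B}_{x_0+a,\varepsilon}$ one replaces the ball by its complement, producing the boundary-localised Gaussian factor $e^{-(\varepsilon-\|z-x_0-a\|)^2/C(s-r)}$, and symmetrically for $z\notin\mathcal{B}_{x_0+a,\varepsilon}$. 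This yields three explicit cross-terms $U_1,U_2,U_3$ which are then estimated by the annular-sum \cref{lma:basic-estimates} (not only \cref{VolBound}), giving the factor $(s_1-r)^{-\frac{1+\beta}{2}}(s_2-r)^{-\frac12}$ whose time integration over $I^-_{h,r}$ produces the rate.

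Your discrete summation-by-parts idea may be workable, but as stated it has a gap: shifting the $y$-integral moves the \emph{domain} but also drags the variable coefficient $a_{ij}(y,s)$, and the resulting bulk term $\int_{\mathcal{B}}(a_{ij}(y+h\psi)-a_{ij}(y))\Gamma\,dy$ is not boundary-localised; you must isolate and control it separately, which is exactly what the paper's coefficient-freezing step accomplishes. Without that step (or an equivalent device), the $1/h^2$ from the second difference does not cancel and the estimate fails. If you adopt the paper's freeze-and-switch trick for $\mathcal{S}^h$ and invoke \cref{lma:basic-estimates} for the final $z$-integration, your proof goes through along essentially the same lines as the paper.
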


\begin{proposition}\label{Bound3}
  Let $\varepsilon = h^\varrho$ for some $\varrho\in(0,1)$, $\kappa\in (\varrho,1)$, $\beta \in (0,1)$, and let $p\in\mathbb{N}^*$. Then, for any $\nu \in \left(0,\nicefrac{1}{2}-\nicefrac{\beta}{4}\right)$, we have that
  \begin{multline*}
    \mathbb{E}\left [\left|\int\limits_{0}^{t_0+\varepsilon}\int\limits_{\mathbb{R}^d}\left(\int\limits_{\mathcal{B}_{x_0,\varepsilon}}\int\limits_{t_0}^{t_0+\varepsilon}
    \frac{(L^h\Gamma)(y,s;z,r)}{m(h)}dsdy\right)\left(\sigma(u(x_0,t_0))-\sigma(u(r,z))\right)W(dz,dr)\right|^{2p}\right ]^{\frac{1}{p}}\\
    \leq K h^{\varrho \nu} (h^{2-2\kappa} + h^{(\kappa-\varrho)(2-\beta)})
  \end{multline*}
  for some constant $K$ depending solely on $T,\beta,\kappa,\varrho,M,x_0,p$, $d$, and $\nu$.
\end{proposition}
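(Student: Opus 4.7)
The plan is to adapt the proofs of \cref{Bound1} and \cref{Bound2}, where the additional factor $\sigma(u(x_0,t_0))-\sigma(u(r,z))$ in the integrand supplies the extra $h^{\varrho\nu}$ via the H\"older regularity of the solution. Since $(L^h\Gamma)(y,s;z,r)=0$ for $s<r$, the inner time integral effectively runs over $[r\vee t_0,t_0+\varepsilon] = I^+_{h,r}\cup I^-_{h,r}$. I would split the stochastic integral into these two pieces, bound each by the triangle inequality, and treat the $I^+$ half using the tools of \cref{Bound1} and the $I^-$ half using the tools of \cref{Bound2}.

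For each piece I would apply the Burkholder--Davis--Gundy inequality \cref{eq:BDG} with exponent $2p$ and then Cauchy--Schwarz inside the resulting moment, arriving at
\begin{equation*}
\E[|Y^\pm|^{2p}]^{1/p} \leq C \int\limits_0^{t_0+\varepsilon}\iint\limits_{\R^{2d}} K^\pm(r,z_1)\, K^\pm(r,z_2)\,\prod_{i=1,2}(|t_0-r|^{\nu}+\|x_0-z_i\|^{\nu})\, \gamma(z_1-z_2)\, dz_1\, dz_2\, dr,
\end{equation*}
where $K^\pm(r,z) = m(h)^{-1}\int_{\mathcal{B}_{x_0,\varepsilon}}\int_{I^\pm_{h,r}}(L^h\Gamma)(y,s;z,r)\,ds\,dy$. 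The H\"older factor here comes from the Lipschitz property of $\sigma$ combined with \cref{lem:moment_bounds_differences}, using $\nu<\nicefrac{1}{2}-\nicefrac{\beta}{4}$ (which also satisfies $\nu<1-\nicefrac{\beta}{2}$, so both the temporal and spatial increments are controlled).

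The next step is to extract the factor $h^{\varrho\nu}$ from the H\"older product. Using $\|x_0-z_i\|^\nu\leq\|y_i-z_i\|^\nu+\varepsilon^\nu$ for any $y_i\in\mathcal{B}_{x_0,\varepsilon}$, together with the scaling identity $\Gamma(y,s;z,r)\|y-z\|^\nu \lesssim (s-r)^{\nu/2}\Gamma_+(y-z,s-r)$ (as in~\cref{eq:scaling_bound}), each copy of $\|y_i-z_i\|^\nu$ is traded for a power $(s_i-r)^{\nu/2}$. This power is at most $h^{\kappa\nu}$ on $I^-_{h,r}$ and at most $\varepsilon^{\nu/2}$ on the near-field portion of $I^+_{h,r}$, while the $\varepsilon^\nu$ contributions pull out directly as $h^{\varrho\nu}$. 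After these manipulations, what remains in the integrand is precisely the kernel structure analysed in the proofs of \cref{Bound1,Bound2}, which contributes $h^{2-2\kappa}$ on $I^+_{h,r}$ and $h^{(\kappa-\varrho)(2-\beta)}$ on $I^-_{h,r}$, yielding the claimed bound.

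The principal obstacle will be controlling the pure time contribution $|t_0-r|^{2\nu}$ when $r$ is far from $t_0$, since this factor is not automatically small there. Here one must exploit the fact that $(L^h\Gamma)(y,s;z,r)$ itself decays sufficiently in $|t_0-r|$ — as used in the multi-scale splittings of \cref{Bound1} — to ensure that the far-field contribution of the $r$-integral also meets the claimed rate. Careful bookkeeping of this interplay between the H\"older factor and the kernel decay, mirroring \cref{Bound1,Bound2}, is the main technical step; once verified in each subregion, the triangle inequality and the two auxiliary propositions combine to give the estimate.
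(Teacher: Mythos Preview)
Your plan is correct and essentially mirrors the paper's own proof, which is itself only a two-sentence sketch: it invokes the H\"older estimate for $\sigma(u(z_i,r))-\sigma(u(x_0,t_0))$ (as in the treatment of $\omega^1(h)$ in \cref{prop:A}), then refers the reader to the kernel analysis of \cref{Bound1,Bound2} for the remaining factor. Your decomposition into $I^{+}_{h,r}$ and $I^{-}_{h,r}$, use of BDG and \cref{lem:moment_bounds_differences}, and the scaling trick \cref{eq:scaling_bound} to absorb $\|y-z\|^{\nu}$ are exactly the ingredients the paper has in mind; your observation about the far-field time contribution is a detail the paper leaves implicit, but it is indeed handled by the same kernel decay already exploited in \cref{Bound1}.
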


\begin{proof}[Proof of \cref{prop:R}]
  Let $\kappa\in (\varrho,1)$. We have that
  \begin{align*}
    \E \left [ \left \lvert \mathbb{E}_{u(x_0,t_0)}\left [\left(\int\limits_{\mathcal B_{x_0,\varepsilon}}\int\limits_{t_0}^{t_0+\varepsilon}R_{y,s,h}dsdy\right)^2\right ]\right \rvert^p \right ]^{\frac{1}{p}}
    \leq 3(R_1+R_2+R_3)
  \end{align*}
  with
  \begin{align*}
    R_1= & \mathbb{E}\left [\left|\int\limits_{0}^{t_0+\varepsilon}\int\limits_{\mathbb{R}^d}\left(\int\limits_{\mathcal{B}_{x_0,\varepsilon}}\int\limits_{(t_0\vee (r+h^{2\kappa}))\wedge(t_0+\varepsilon)}^{t_0+\varepsilon}
    \frac{(L^h\Gamma)(y,s;z,r)}{m(h)}dsdy\right)\sigma(u(x_0,t_0))W(dz,dr)\right|^{2p}\right ]^\frac{1}{p},                                                                                                         \\
    R_2= & \mathbb{E}\left [\left|\int\limits_{t_0-h^{2\kappa}}^{t_0+\varepsilon}\int\limits_{\mathbb{R}^d}\left(\int\limits_{\mathcal{B}_{x_0,\varepsilon}}\int\limits_{r\vee t_0}^{(r+h^{2\kappa})\wedge(t_0+\varepsilon)}
    \frac{(L^h\Gamma)(y,s;z,r)}{m(h)}dsdy\right)\sigma(u(x_0,t_0))W(dz,dr)\right|^{2p}\right ]^\frac{1}{p},
  \end{align*}
  and
  \begin{align*}
    R_3= & \mathbb{E}\left [\left|\int\limits_{0}^{t_0+\varepsilon}\int\limits_{\mathbb{R}^d}\left(\int\limits_{\mathcal{B}_{x_0,\varepsilon}}\int\limits_{t_0}^{(t_0+\varepsilon)}
    \frac{(L^h\Gamma)(y,s;z,r)}{m(h)}dsdy\right)\left(\sigma(u(x_0,t_0))-\sigma(u(r,z))\right)W(dz,dr)\right|^{2p}\right ]^\frac{1}{p}.\notag
  \end{align*}
  Applying \cref{Bound1} to $R_1$, \cref{Bound2} to $R_2$, and \cref{Bound3} to $R_3$ concludes the proof.

\end{proof}

\subsubsection{Proof of the auxiliary \cref{Bound1,Bound2,Bound3} related to \cref{prop:R}}
\label{sec:R-proof-aux}

\begin{proof}[Proof of \cref{Bound1}]
  Using \cref{eq:BDG} and the fact that
  \begin{equation*}
    \mathbb{E}[|\sigma^2(u(x_0,t_0))|^p]\leq C 
  \end{equation*}
  allows us to, without loss of generality, consider only the case $p=1$. Using Taylor expansion and the fact $L\Gamma = 0$ outside the diagonal $t = s$, we obtain
  \begin{multline*}
    (L^h\Gamma)(y,s;z,r)=h^2\int\limits_0^1(1-w)\partial_{tt}\Gamma(y,s+wh^2;z,r)dw\\
    +h\sum_{i,j=1}^d\sum_{k+l=3}\frac{3}{k!l!}a_{ij}(y,s)\int\limits_0^1(1-w)^2\partial_{x^k_ix^l_j}\Gamma(y+wh(e_i+e_j),s;z,r)dw\\
    +h\sum_{i=1}^d b_i(y,s)\int\limits_0^1(1-w)\partial_{x^2_i}\Gamma(y+whe_i,s;z,r)dw.
  \end{multline*}
  Consequently, the heat kernel bounds provided in \cref{lma:heat-bounds} give
  \begin{multline*}
    |(L^h\Gamma)(y,s;z,r)|
    \leq C\left(\frac{h^2}{(s-r)^2}+\frac{h}{(s-r)^\frac{3}{2}}\right)\int\limits_{0}^1 \Gamma_+(y-z; s-r+wh^2)
    \\
    +\Gamma_+(y+wh(e_i+e_j)-z;s-r) + \Gamma_+(y+whe_i-z;s-r)dw.
  \end{multline*}
  By the semigroup property of the heat kernel, we obtain
  \begin{multline*}
    \int\limits_{\mathbb{R}^d}\int\limits_{\mathbb{R}^d}|(L^h\Gamma)(y_1,s_1;z_1,r)||(L^h\Gamma)(y_2,s_2;z_2,r)|\gamma(z_1-z_2)dz_1dz_2\\
    \leq C G(y_1-y_2,s_1+s_2-2r)
    \times\left(\frac{h^4}{(s_1-r)^2(s_2-r)^2}+\frac{h^3}{(s_1-r)^2(s_2-r)^\frac{3}{2}}+\frac{h^2}{(s_1-r)^\frac{3}{2}(s_2-r)^\frac{3}{2}}\right),
  \end{multline*}
  where
  \begin{align*}
    G(x,\tau):= &
    \sum_{ij} \iint\limits_{[0,1]^2} \int\limits_{\mathbb{R}^d}\Gamma_+(x+z+w_1 \varphi_{i} + w_2 \varphi_{j};\tau)\gamma(z)dz dw_1 dw_2
    \\
    & +
    \sum_{i} \iint\limits_{[0,1]^2} \int\limits_{\mathbb{R}^d}\Gamma_+(x+z+w_1 \varphi_{i};\tau+w_2 h^2)\gamma(z)dz dw_1 dw_2
    \\
    & +
    \iint\limits_{[0,1]^2} \int\limits_{\mathbb{R}^d}\Gamma_+(x+z;\tau+w_1 h^2+w_2 h^2)\gamma(z)dz dw_1 dw_2
  \end{align*}
  for some vectors $\varphi_i \in \R^d$.
  Thus, by \cref{lemma:conditional}, we have
  \begin{multline*}
    \mathbb{E}\left [\left(\int\limits_{0}^{t_0+\varepsilon}\int\limits_{\mathbb{R}^d}\left(\int\limits_{\mathcal{B}_{x_0,\varepsilon}}\int\limits_{I^+_{h,r}}
    (L^h\Gamma)(y,s;z,r)dsdy\right)\sigma(u(x_0,t_0))W(dz,dr)\right)^2\right ]\\
    \leq C\mathbb{E}[\sigma^2(u(x_0,t_0))]\int\limits_0^{t_0+\varepsilon}\iint\limits_{(I^+_{h,r})^{\otimes 2}} \left(\frac{h^4}{(s_1-r)^2(s_2-r)^2}+\frac{h^3}{(s_1-r)^2(s_2-r)^\frac{3}{2}}+\frac{h^2}{(s_1-r)^\frac{3}{2}(s_2-r)^\frac{3}{2}}\right)\\
    \times\int\limits_{\mathcal{B}_{x_0,\varepsilon}}\int\limits_{\mathcal{B}_{x_0,\varepsilon}}
    G(y_1-y_2,s_1+s_2-2r) dy_1dy_2ds_1ds_2dr.
  \end{multline*}
  Proceeding as in the proof of \cref{prop:A} gives
  \begin{multline*}
    \int\limits_{\mathcal{B}_{x_0,\varepsilon}}\int\limits_{\mathcal{B}_{x_0,\varepsilon}}
    G(y_1-y_2,s_1+s_2-2r) dy_1dy_2
    \\
    \begin{aligned}
      \leq &
      C\sup_{\delta\in [0,2h^2],\psi\in\mathcal{B}_{0,4h}} \int\limits_{\mathbb{R}^d}|\mathcal{F} \Gamma_+(\xi+\psi;s_1+s_2-2r+\delta)| |\mathcal{F}(\mathbb{I}_{\mathcal{B}_{x_0,\varepsilon}})(\xi)|^2 \mathcal{F}(\gamma)(\xi)d\xi
      \\
      \leq & C \varepsilon^{2d-\beta} \int\limits_{\mathbb{R}^d} e^{-C(s_1+s_2-2r)^2 \|\xi/\varepsilon\|^2} \hat \gamma(\xi) \|\xi\|^{-d} J_{d/2}^2(\|\xi\|) d\xi
      \\
      \leq & C \varepsilon^{2d-\beta}.
    \end{aligned}
  \end{multline*}
  Consequently,
  \begin{multline*}
    \mathbb{E}\left [\left(\int\limits_{0}^{t_0+\varepsilon}\int\limits_{\mathbb{R}^d}\left(\int\limits_{\mathcal{B}_{x_0,\varepsilon}}\int\limits_{I^+_{h,r}}
    (L^h\Gamma)(y,s;z,r)dsdy\right)\sigma(u(x_0,t_0))W(dz,dr)\right)^2\right ]
    \\
    \begin{aligned}
      \leq & C \varepsilon^{2d-\beta}\int\limits_0^{t_0+\varepsilon}\iint\limits_{(I^+_{h,r})^{\otimes 2}}
      \left(\frac{h^4}{(s_1-r)^2(s_2-r)^2}+\frac{h^3}{(s_1-r)^2(s_2-r)^\frac{3}{2}}+\frac{h^2}{(s_1-r)^\frac{3}{2}(s_2-r)^\frac{3}{2}}\right) ds_1ds_2dr
      \\
      \leq & C \varepsilon^{2d-\beta}\left(h^4(\varepsilon h^{-4\kappa}+\varepsilon^{-2})+ h^3(\varepsilon h^{-3\kappa}+\varepsilon^{-\frac{3}{2}})+h^2(\varepsilon h^{-2\kappa}+\varepsilon^{-1})\right) \\
      \leq & C \varepsilon^{2d-\beta+1} \left(h^{4-4\kappa}+\frac{h^4}{\varepsilon^3} + h^{3-3\kappa}+\frac{h^3}{\varepsilon^{1+\frac32}} + h^{2-2\kappa}+\frac{h^2}{\varepsilon^2}\right).
    \end{aligned}
  \end{multline*}
  Plugging in $\varepsilon =h^{\varrho}$ and using
  $
    h^{4-4\kappa} \leq Ch^{3-3\kappa} \leq Ch^{2-2\kappa}
  $
  and
  $
    h^{4-3\varrho} \leq Ch^{2-2\varrho}
  $
  together with \cref{prop:A} yields the result.
\end{proof}

\begin{proof}[Proof of \cref{Bound2}]
  Again, without loss of generality, it suffices to consider the case $p=1$.  We start the proof with preliminary reductions. Recall that $I_h = [t_0-h^{2\kappa},t_0+\varepsilon]$ and $I^-_{h,r} = [r\vee t_0,(r+h^{2\kappa})\wedge(t_0+\varepsilon)]$. Now
  \begin{multline} \label{eq:prelim1}
    \mathbb{E}\left [\left(\int\limits_{I_h}\int\limits_{\mathbb{R}^d}\left(\int\limits_{\mathcal{B}_{x_0,\varepsilon}}\int\limits_{I^-_{h,r}}
    (L^h\Gamma)(y,s;z,r)dsdy\right)\sigma^2(u(x_0,t_0))W(dz,dr)\right)^2\right ] \\
    =
    \int\limits_{I_h} \iint\limits_{\R^{2d}} \iint\limits_{(I^-_{h,r})^{\otimes 2}} (F^{1,h}(\cdot;\cdot,r)+F^{2,h}(\cdot;\cdot,r))^{\otimes 2} \gamma(z_1-z_2)  ds_1 ds_2 dz_1 dz_2dr,
  \end{multline}
  where, with operators $T$ and $S$ defined in \cref{DefLh},
  \begin{equation} \label{eq:F1F2}
    \begin{aligned}
      F^{1,h}(s;z,r) = & \int\limits_{\mathcal{B}_{x_0,\varepsilon}}(\mathcal{D}_t^{h^2} \Gamma)(y,s;z,r)dy
      \\
      F^{2,h}(s;z,r) = & \int\limits_{\mathcal{B}_{x_0,\varepsilon}}(\mathcal{S}^h\Gamma)(y,s;z,r)dy.
    \end{aligned}
  \end{equation}

  By Taylor's theorem we can write
  \begin{align*}
    \mathcal{D}_t^{h^2} \Gamma(y,s;z,r) = & \int\limits_0^1 \partial_t\Gamma(y,s+wh^2;z,r) dw
  \end{align*}
  and
  \begin{align*}
    \mathcal{D}^{2,h}_{ij} f(x) = & \frac{f(x+h(e_i+e_j)) + f(x)-f(x+he_i) - f(x+he_j)}{h^2}
    \\
    = &
    \int\limits_0^1 (1-w) (e_i+e_j)^T D_x^2 f(x + h w (e_i+e_j)) (e_i+e_j) dw
    \\
    & -
    \int\limits_0^1 (1-w) e_i^T D_x^2 f(x + h w e_i) e_i dw
    \\
    & -
    \int\limits_0^1 (1-w) e_j^T D_x^2 f(x + h w e_j) e_j dw
    \\
    = & \sum_{k=1}^3 \sigma_k \int\limits_0^1 (1-w) \psi_{ij,k}^T D_x^2 f(x + h w \psi_{ij,k}) \psi_{ij,k} dw,
  \end{align*}
  where $\sigma_k \in \{-1,1\}$ and $D_x^2 f$ is the Hessian with respect to $x$.
  Thus,
  \begin{align*}
    a_{ij}(y,s) (\mathcal{D}^{2,h}_{ij} \Gamma)(y,s;z,r)
    =
    \sum_{k=1}^3 \sigma_k \int\limits_0^1 (1-w) a_{ij}(y,s) \psi_{ij,k}^T D_y^2 \Gamma(y + h w \psi_{ij,k},s;z,r) \psi_{ij,k} dw
  \end{align*}
  for some fixed vectors $\psi_{ij,k} \in \R^d$,
  and hence
  \begin{align*}
    \int\limits_{\mathcal{B}_{x_0,\varepsilon}} a_{ij}(y,s) (\mathcal{D}^{2,h}_{ij} \Gamma)(y,s;z,r) dy
    = &
    \sum_{k=1}^3 \sigma_k \int\limits_0^1 (1-w)
    \\
    & \times \int\limits_{\mathcal{B}_{x_0+hw\psi_{ij,k},\varepsilon}} a_{ij}(y-hw\psi_{ij,k},s) \psi_{ij,k}^T D_y^2 \Gamma(y,s;z,r) \psi_{ij,k} dy dw.
  \end{align*}
  We next estimate the integrals. We only consider the second order part in the operator $\mathcal{S}^h$ as the first order part in $\mathcal{S}^h$ is negligible, and the operator $\mathcal{D}_t^{h^2} $ concerning the time derivative can be treated similarly. For this, for a given vector $a \in \R^d$, we need to control integrals of the form
  \begin{align}
    \int\limits_{\mathcal{B}_{x_0+a,\varepsilon}} a_{ij}(y-a,s) \psi_{ij,k}^T D_y^2 \Gamma(y,s;z,r) \psi_{ij,k} dy
    =\int\limits_{\mathcal{B}_{x_0+a,\varepsilon}} a_{ij}(z-a,s) \psi_{ij,k}^T D_y^2 \Gamma(y,s;z,r) \psi_{ij,k} dy\nonumber
    \\
    + \int\limits_{\mathcal{B}_{x_0+a,\varepsilon}} (a_{ij}(y-a,s)-a_{ij}(z-a,s)) \psi_{ij,k}^T D_y^2 \Gamma(y,s;z,r) \psi_{ij,k} dy \label{eq:split}.
  \end{align}
  For the second term in \cref{eq:split}, Lipschitz continuity of $a_{ij}$ and the Gaussian bounds for $\Gamma$ gives
  \begin{align*}
    \int\limits_{\mathcal{B}_{x_0+a,\varepsilon}} (a_{ij}(y-a,s)-a_{ij}(z-a,s)) \psi_{ij,k}^T D_y^2 \Gamma(y,s;z,r) \psi_{ij,k} dy
    \leq
    C \int\limits_{\mathcal{B}_{x_0+a,\varepsilon}} \frac{\|y-z\|}{s-r}\Gamma_+(y-z; s-r) dy,
  \end{align*}
  where $\Gamma_+(y-z; s-r) = \frac{C}{(s-r)^{d/2}} e^{-\frac{|y-z|^2}{C(s-r)}}$.
  We next consider the first term in \cref{eq:split} separately in the cases  $z \in \mathcal{B}_{x_0+a,\varepsilon}^c$ and $z \in \mathcal{B}_{x_0+a,\varepsilon}$.
  If $z \in \mathcal{B}_{x_0+a,\varepsilon}$, then
  \begin{align*}
    (s-r)\int\limits_{\mathcal{B}_{x_0+a,\varepsilon}} \psi_{ij,k}^T D_y^2 \Gamma(y,s;z,r) \psi_{ij,k} dy
    =    &
    -(s-r)\int\limits_{\mathcal{B}_{x_0+a,\varepsilon}^c} \psi_{ij,k}^T D_y^2 \Gamma(y,s;z,r) \psi_{ij,k} dy
    \\
    \leq &
    C \int\limits_{\mathcal{B}_{x_0+a,\varepsilon}^c} \Gamma_+(y-z; s-r) dy
    \\
    \leq & \sup_{y\in \mathbb{R}^d\setminus\mathcal{B}_{x_0+a,\varepsilon}}e^{-\frac{\|z-y\|^2}{C(s-r)}}\leq e^{-\frac{(\varepsilon-\|z-x_0-a\|)^2}{C(s-r)}}.
  \end{align*}
  If $z \in \mathcal{B}_{x_0+a,\varepsilon}^c$, then
  \begin{align*}
    (s-r)\int\limits_{\mathcal{B}_{x_0+a,\varepsilon}} \psi_{ij,k}^T D_y^2 \Gamma(y,s;z,r) \psi_{ij,k} dy
    \leq &
    C \int\limits_{\mathcal{B}_{x_0+a,\varepsilon}} \Gamma_+(y-z; s-r) dy
    \\
    \leq & Ce^{-\frac{(\|z-x_0-a\|-\varepsilon)^2}{C(s-r)}}.
  \end{align*}
  It follows that in \cref{eq:prelim1} we obtain three separate terms (recall that first order term in $\mathcal{S}^h$ and $\mathcal{D}_t^{h^2} $ can be treated similarly). We first consider the cross-term arising from multiplying the first and the second term of \cref{eq:split}. This is considered using $U_1$ below. After that we consider the second powers of the first and second terms of \cref{eq:split}. These are considered using $U_2$ and $U_3$.

  In what follows, we use the notations of \cref{lma:basic-estimates}. Denote
  \begin{multline*}
    U_1 := \frac{1}{s_2-r}\iint\limits_{\R^{2d}}  \iint\limits_{[0,1]^2} \int\limits_{\mathcal{B}_{x_0+h w_1 \psi_k,\varepsilon}} \frac{\|y-z_1\|}{s_1-r}\Gamma_+(y-z_1; s_1-r) dy
    \\
    \times e^{-\frac{\Vert z_2-x_0-h w_2 \psi_l\Vert^2}{2(s_2-r)}} \gamma(z_1-z_2)  dw_1 dw_2dz_1 dz_2.
  \end{multline*}
  Assuming $s_1-r > s_2-r$ and making the change of variables $\hat z_1 = \frac{z_1}{\sqrt{s_1-r}}$, $\hat z_2 = \frac{z_2}{\sqrt{s_2-r}}$, $\hat y = \frac{y}{\sqrt{s_1-r}}$ allow us to write
  \begin{align*}
    U_1 = &
    (s_1-r)^{\frac{d}{2}-\frac{1}{2}-\frac{\beta}{2}} (s_2-r)^{\frac{d}{2}-1} \iint\limits_{[0,1]^2}\iint_{\R^{2d}}
    H \left (-\hat z_1, \frac{\varepsilon}{\sqrt{s_1-r}} \right ) e^{-\frac{|\hat z_2|^2}{2}}
    \\
    & \times \gamma \left (\hat z_1-\frac{\sqrt{s_2-r}}{\sqrt{s_1-r}} \hat z_2 -\frac{x_0+h w_1 \psi_k}{\sqrt{s_1-r}}-\sqrt{s_2-r}\frac{x_0+h w_2 \psi_l}{\sqrt{s_1-r}} \right )  d \hat z_1 d \hat z_2 dw_1 dw_2 \\
    = &
    (s_2-r)^{\frac{d}{2}-1}(s_1-r)^{\frac{d}{2}-\frac{1}{2}-\frac{\beta}{2}}\int\limits_0^1\int\limits_0^1
    \bigg ( \sum_{k_1=0}^{\infty}\sum_{k_2=0}^\infty
    \int\limits_{S_{k_1}} \int\limits_{S_{k_2}}
    H \left (-z_1,\frac{\varepsilon}{\sqrt{s_1-r}} \right ) e^{-\frac{|z_2|^2}{2}}
    \\
    & \times \left \|z_1-\frac{\sqrt{s_2-r}}{\sqrt{s_1-r}} z_2 -\frac{x_0+h w_2 \psi_k}{\sqrt{s_1-r}}-\sqrt{s_2-r}\frac{x_0+h w_1 \psi_l}{\sqrt{s_1-r}} \right \|^{-\beta} \bigg )  d z_1 d z_2 dw_1 dw_2
    \\
    \leq  &
    (s_2-r)^{\frac{d}{2}-1}(s_1-r)^{\frac{d}{2}-\frac{1}{2}-\frac{\beta}{2}}\int\limits_0^1\int\limits_0^1
    \\
    & \times \sum_{k_1=0}^{\infty}\sum_{k_2=0}^\infty
    \sup_{S_{k_1}} H\left (-\cdot,\frac{\varepsilon}{\sqrt{s_1-r}} \right ) \sup_{S_{k_2}} e^{-\frac{|\cdot|^2}{2}}
    \\
    & \times \int\limits_{S_{k_1}} \int\limits_{S_{k_2}} \left \|z_1-\frac{\sqrt{s_2-r}}{\sqrt{s_1-r}} z_2 -\frac{x_0+h w_2 \psi_k}{\sqrt{s_1-r}}-\sqrt{s_2-r}\frac{x_0+h w_1 \psi_l}{\sqrt{s_1-r}} \right \|^{-\beta}  d z_1 d z_2 dw_1 dw_2.
  \end{align*}
  \cref{VolBound} now gives
  \begin{align*}
    U_1
    \leq &
    (s_2-r)^{\frac{d}{2}-1}(s_1-r)^{\frac{d}{2}-\frac{1}{2}-\frac{\beta}{2}}
    \\
    & \times \sum_{k_1=0}^{\infty}\sum_{k_2=0}^\infty
    \sup_{S_{k_1}} H(-\cdot,\varepsilon,s_1-r) \sup_{S_{k_2}} e^{-\frac{|\cdot|^2}{2}} \left(k_{1}^{d-1}\vee 1\right)k_2^{d-1}.
  \end{align*}
  Consequently, using \cref{lma:basic-estimates}, leads to
  \begin{align*}
    U_1
    \leq & C
    (s_2-r)^{\frac{d}{2}-1}(s_1-r)^{\frac{d}{2}-\frac{1}{2}-\frac{\beta}{2}} \left ( \frac{\varepsilon}{\sqrt{s_1-r}} \right )^{d}\left ( \frac{\varepsilon}{\sqrt{s_2-r}} \right )^{d-1}
    \\
    = &
    C \varepsilon^{2d-1} (s_1-r)^{-\frac{1}{2}-\frac{\beta}{2}}(s_2-r)^{-\frac{1}{2}}.
  \end{align*}
  For the second power terms, we use similar arguments and obtain
  \begin{align*}
    U_2 := & \frac{1}{s_1-r}\frac{1}{s_2-r}\iint\limits_{[0,1]^2}\iint\limits_{\R^{2d}}  e^{-\frac{\Vert z_1-x_0-h w_1 \psi_k\Vert^2}{2(s_1-r)}} e^{-\frac{\Vert z_2-x_0-h w_2 \psi_l\Vert^2}{2(s_2-r)}} \gamma(z_1-z_2) dz_1 dz_2 dw_1 dw_2
    \\
    \leq & \varepsilon^{2d-2} (s_1-r)^{-\frac{1}{2}-\frac{\beta}{2}}(s_2-r)^{-\frac{1}{2}}
  \end{align*}
  and
  \begin{align*}
    U_3 := & \iint\limits_{[0,1]^2}\iint\limits_{\R^{2d}}  \int\limits_{\mathcal{B}_{x_0+h w_1 \psi_k,\varepsilon}} \frac{\|y-z_1\|}{s_1-r}\Gamma_+(y-z_1; s_1-r) dy
    \\
    & \times \int\limits_{\mathcal{B}_{x_0+h w_w \psi_l,\varepsilon}} \frac{\|y-z_2\|}{s_2-r}\Gamma_+(y-z_2; s_2-r) dy \gamma(z_1-z_2) dz_1 dz_2 dw_1 dw_2
    \\
    \leq & C \varepsilon^{2d} (s_1-r)^{-\frac{1}{2}-\frac{\beta}{2}}(s_2-r)^{-\frac{1}{2}}.
  \end{align*}

  In order to complete the proof, it remains to integrate in the time variable $r$. Using the bounds given above and the assumption $\beta<1$, we obtain
  \begin{multline*}
    \mathbb{E}\left [\left(\int\limits_{I_h}\int\limits_{\mathbb{R}^d}\left(\int\limits_{\mathcal{B}_{x_0,\varepsilon}}\int\limits_{I^-_{h,r}}
    (L^h\Gamma)(y,s;z,r)dsdy\right)\sigma^2(u(x_0,t_0))W(dz,dr)\right)^2\right ]
    \\
    \begin{aligned}
      & \leq C \int\limits_{I_h} \iint\limits_{(I^-_{h,r})^{\otimes 2}} (U_1+U_2+U_3) ds_1 ds_2 dr
      \\
      & \leq C \varepsilon^{2d-2} \int\limits_{I_h} \iint\limits_{(I^-_{h,r})^{\otimes 2}} (s_1-r)^{-\frac{1}{2}-\frac{\beta}{2}}(s_2-r)^{-\frac{1}{2}} ds_1 ds_2 dr
      \\
      & \leq C \varepsilon^{2d-2}(h^{\kappa(4-\beta)} + \varepsilon h^{\kappa(2-\beta)}).
    \end{aligned}
  \end{multline*}
  Dividing with $m^2(h) \sim \varepsilon^{2d-\beta+1}$ leads to
  \begin{align*}
    \varepsilon^{-2d+\beta-1} \varepsilon^{2d-2}(h^{\kappa(4-\beta)} + \varepsilon h^{\kappa(2-\beta)})
    = &
    h^{\varrho(\beta-4)+\varrho} (h^{\kappa(4-\beta)} + \varepsilon h^{\kappa(2-\beta)})
    =
    h^{(\kappa-\varrho)(4-\beta)+\varrho} (1 + h^{\varrho-2\kappa})
    \\
    = &
    h^{(\kappa-\varrho)(4-\beta)+\varrho} + h^{(\kappa-\varrho)(2-\beta)}
    \leq C h^{(\kappa-\varrho)(2-\beta)}.
  \end{align*}
  This completes the proof.
\end{proof}

\begin{proof}[Proof of \cref{Bound3}]
  Similarly as in bounding the term $\omega^1(h)$ in the proof of \cref{prop:A}, we use
  \begin{equation*}
    \mathbb{E}\left [|\sigma(u(z_1,r))\sigma(u(z_2,r))-\sigma^2(u(x_0,t_0))|\right ]
    \leq C ((\|z_1-y_1\|_2+\varepsilon)^{2\nu}+(\|z_2-y_2\|_2+\varepsilon)^{2\nu}+|r-t_0|^{\nu}),
  \end{equation*}
  valid for all $\nu \in \left(0,\nicefrac{1}{2}-\nicefrac{\beta}{4}\right)$. The claim is provided by using the properties of the kernel and following similar steps as in the proofs of \cref{Bound1,Bound2}.
\end{proof}

\section{Proof of \cref{main2}}
\label{sec:white-proof}
Throughout \cref{sec:white-proof}, we assume that $\dot{W}$ is the space-time white noise and $d=1$. The proof of \cref{main2} follows from the same arguments as the proof of \cref{main}. However, in this case, the isometry \cref{eq:covariance} is simply
\begin{equation*}
  \langle \varphi,\psi\rangle_{\gamma}:=\int\limits_{\mathbb{R}_+}\int\limits_{\mathbb{R}}\varphi(x,s)\psi(x,s)dxds
\end{equation*}
making the proof considerably easier. For this reason, we only present the main steps of the proof and leave the details to the reader.
\begin{proof}[Proof of \cref{main2}]
  Using the same steps as in the proof of \cref{main2}, we see that  the error consists of the following four terms
  \begin{align*}
    \omega^1(h) = \frac{1}{h^4 \hat{m}^2(h)} \iint\limits_{I_h(t_0) \times \mathbb{R} } \left(\mathbb{E}_{u(x_0,t_0)}\left [\sigma^2(u(z,r))-\sigma^2(u(x_0,t_0))\right ]\right)
    \widetilde{A}^2(z,r) dz dr,
  \end{align*}
  \begin{align*}
    R_1= & \mathbb{E}\left [\left|\int\limits_{0}^{t_0+\varepsilon}\int\limits_{\mathbb{R}}\left(\int\limits_{\mathcal{B}_{x_0,\varepsilon}}\int\limits_{(t_0\vee (r+h^{2\kappa}))\wedge(t_0+\varepsilon)}^{t_0+\varepsilon}
    \frac{(L^h\Gamma)(y,s;z,r)}{\hat{m}(h)}dsdy\right)\sigma(u(x_0,t_0))W(dz,dr)\right|^{2p}\right ]^\frac{1}{p},                                                                                                 \\
    R_2= & \mathbb{E}\left [\left|\int\limits_{t_0-h^{2\kappa}}^{t_0+\varepsilon}\int\limits_{\mathbb{R}}\left(\int\limits_{\mathcal{B}_{x_0,\varepsilon}}\int\limits_{r\vee t_0}^{(r+h^{2\kappa})\wedge(t_0+\varepsilon)}
    \frac{(L^h\Gamma)(y,s;z,r)}{\hat{m}(h)}dsdy\right)\sigma(u(x_0,t_0))W(dz,dr)\right|^{2p}\right ]^\frac{1}{p},
  \end{align*}
  and
  \begin{align*}
    R_3= & \mathbb{E}\left [\left|\int\limits_{0}^{t_0+\varepsilon}\int\limits_{\mathbb{R}}\left(\int\limits_{\mathcal{B}_{x_0,\varepsilon}}\int\limits_{t_0}^{(t_0+\varepsilon)}
    \frac{(L^h\Gamma)(y,s;z,r)}{\hat{m}(h)}dsdy\right)\left(\sigma(u(x_0,t_0))-\sigma(u(r,z))\right)W(dz,dr)\right|^{2p}\right ]^\frac{1}{p}.\notag
  \end{align*}
  Proceeding as in the proof of \cref{prop:A}, we obtain that the normalizing sequence satisfies
  \begin{align*}
    \frac{h^{2\varrho}}{C} \leq \hat{m}^2(h) \leq C h^{2\varrho}.
  \end{align*}
  To bound $\omega^1(h)$, we note that in this case we have bounds
  \begin{align*}
    \mathbb{E}\left [|I(x,t)-I(x+h,t)|^p\right ]
     & \leq C |h|^{2\nu p},
  \end{align*}
  and
  \begin{align*}
    \mathbb{E}\left [|I(x,t+h)-I(x,t)|^p\right ]
     & \leq C_T h^{\nu p}
  \end{align*}
  for any $\nu \in \left(0,\nicefrac{1}{4}\right)$. Following the same steps as in the proof of \cref{prop:A}
  leads to
  \begin{equation*}
    \mathbb{E}\left [|\omega^1(h)|^p\right ]^{\frac{1}{p}} \leq C_p \varepsilon^\nu = h^{\varrho\nu}
  \end{equation*}
  with $0<\nu < \nicefrac{1}{4}$. Similarly, proceeding
  as in the proof of \cref{Bound1} gives a bound
  \begin{equation*}
    R_1 \leq h^{2-2\kappa}.
  \end{equation*}
  For $R_2$, we follow the same steps as in the proof of \cref{Bound2} and obtain three terms given by
  \begin{equation*}
    U_1 := \frac{1}{s_2-r}\int\limits_{\R}  \iint\limits_{[0,1]^2} \int\limits_{\mathcal{B}_{x_0+h w_1 \psi_k,\varepsilon}} \frac{\|y-z\|}{s_1-r}\Gamma_+(y-z; s_1-r)
    e^{-\frac{\Vert z-x_0-h w_2 \psi_l\Vert^2}{2(s_2-r)}}   dydw_1 dw_2dz.
  \end{equation*}
  \begin{equation*}
    U_2 := \frac{1}{s_1-r}\frac{1}{s_2-r}\iint\limits_{[0,1]^2}\int\limits_{\R}  e^{-\frac{\Vert z-x_0-h w_1 \psi_k\Vert^2}{2(s_1-r)}} e^{-\frac{\Vert z-x_0-h w_2 \psi_l\Vert^2}{2(s_2-r)}}  dz dw_1 dw_2,
  \end{equation*}
  and
  \begin{multline*}
    U_3 := \iint\limits_{[0,1]^2}\int\limits_{\R}  \int\limits_{\mathcal{B}_{x_0+h w_1 \psi_k,\varepsilon}} \frac{\|y-z\|}{s_1-r}\Gamma_+(y-z; s_1-r) dy
    \\
    \times \int\limits_{\mathcal{B}_{x_0+h w_w \psi_l,\varepsilon}} \frac{\|y-z\|}{s_2-r}\Gamma_+(y-z; s_2-r) dy dz dw_1 dw_2.
  \end{multline*}
  Using the heat kernel bounds and the semigroup property, we deduce that
  \begin{equation*}
    U_1 \leq C\varepsilon (s_1-r)^{-\frac12}(s_2-r)^{-\frac12},
  \end{equation*}
  \begin{equation*}
    U_2 \leq C(s_1-r)^{-\frac12}(s_2-r)^{-\frac12},
  \end{equation*}
  and
  \begin{equation*}
    U_3 \leq C\varepsilon^2 (s_1-r)^{-\frac12}(s_2-r)^{-\frac12}.
  \end{equation*}
  From this, time integration then gives a bound
  \begin{equation*}
    R_2 \leq C\varepsilon^{-2}\left(h^{4\kappa}+\varepsilon h^{2\kappa}\right) \leq Ch^{2\kappa-\varrho}.
  \end{equation*}
  Finally, $R_3$ can be bounded by proceeding as in  the proof of \cref{Bound3}, and we obtain
  \begin{equation*}
    R_3 \leq C h^{\varrho \nu}\left(h^{2-2\kappa}+h^{2\kappa - \varrho}\right)
  \end{equation*}
  for $\nu \in\left(0,\nicefrac{1}{4}\right)$. As $\kappa>\varrho$, combining everything gives
  \begin{align*}
    \E \left [|\widehat{\sigma}^2_{\varepsilon, h}(u(x_0,t_0);x_0,t_0)-\sigma^2(u(x_0,t_0))|^p\right ]^{\frac{1}{p}}
     & \leq K \left(h^{\varrho\nu}+h^{2-2\kappa}+h^{2\kappa-\varrho}\right) \\
     & \leq K \left(h^{\varrho\nu}+h^{2-2\kappa}\right).
  \end{align*}
  The result now follows by choosing $\kappa \approx \varrho$ and optimizing the choices of $\varrho$ and $\nu$.
\end{proof}

\appendix

\section{Additional visualizations and discussion related to \cref{sec:numerical}}\label{sec:additional_plots}
In this section, we provide additional visualizations and discussion related to the simulations performed in \cref{sec:numerical}, in the particular case $\sigma(x) = \sigma_3(x) = \frac{1}{8} \exp(\sin(4|x-2|))$.

\cref{fig:calibration_curves} elaborates the effect of $\varepsilon$ and $h$ by plotting the true values of $\sigma_3$ against the estimated ones. The dashed blue lines in \cref{fig:calibration_curves} correspond to perfect estimation. \cref{fig:estimated_sd_residuals} displays, in addition to plotting the true values against the estimated values, also the estimated conditional standard deviations. We see that certain choices of $h$ and $\varepsilon$ have significantly better calibration. In addition, the behavior of the standard deviations in \cref{fig:estimated_sd_residuals} appears to depend on the true value of $\sigma_3$. Similar observations can be made from \cref{fig:plot_sigma_function_1,fig:plot_sigma_function_2,fig:grid_sigma_functions,fig:plot_sigma_functions_matrix,fig:grid_sigma_functions_epsilon_order}.

Our visualizations reveal that, as $\varepsilon$ increases, the peaks become flatter and move to the right. This could be explained by the skewness of the distribution of $u$. In particular, since we condition on $u(x_0,t_0)$, the integrated predictor uses future values. Provided that $u(x_0,t_0)$ is relatively large, these future values are typically lower than $u(x_0,t_0)$ due to the rapid decay of the heat kernel. This causes a shift/delay in the regression curve.

We use a simple stochastic model to verify this effect. We simulate the model setup as follows. We start with the points $x_1,\ldots,x_n$, and the true regression function $\sigma_3^2(x) = \frac{1}{64} \exp(2\sin(4|x-2|))$. Then we set
\begin{align*}
  y = \sigma_3^2(x)
\end{align*}
and
\begin{align*}
  \tilde x_i = x_i + 2 \sigma_3^2(y_i)\varepsilon_i,
\end{align*}
where $1-\varepsilon_i \sim \chi^2(1)$. In this model we have noisy measurements $\tilde{x}_i$, with the skewed noise proportional to $\sigma_3^2(y_i)$ resembling the above phenomena. \cref{fig:shifting} that displays the kernel regression results for the simple stochastic model confirms our reasoning.

\begin{table}[ht]
  \centering
  \begin{tabular}{c|ccc}
    $\varepsilon \backslash h$ & $\frac{2}{N_x}$ & $\frac{4}{N_x}$ & $\frac{8}{N_x}$ \\ \hline
    $\frac{4}{N_x}$            & \includegraphicspage[0.29\textwidth]{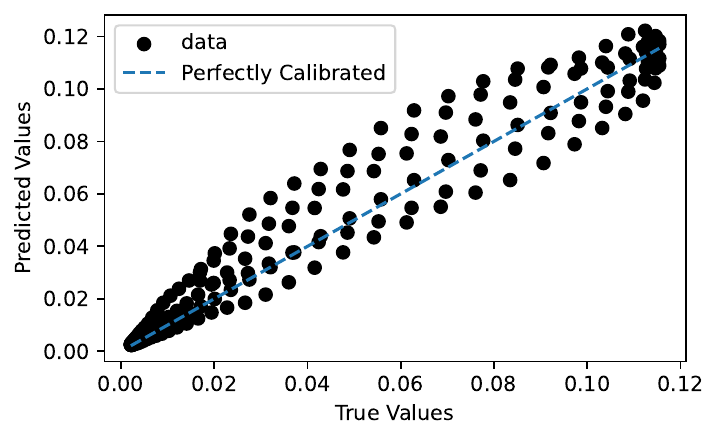} & \includegraphicspage[0.29\textwidth]{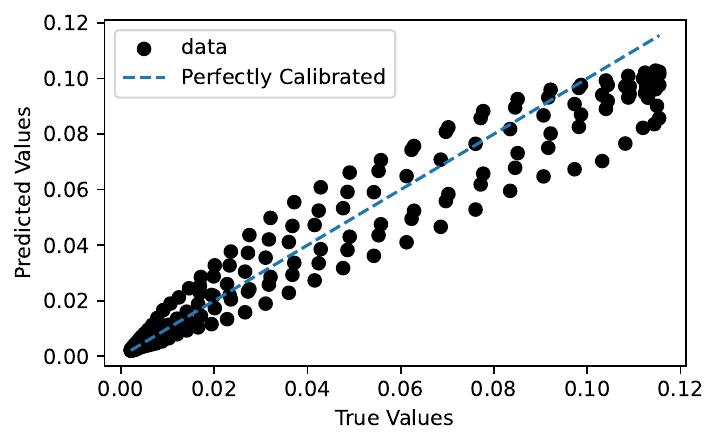} & \\
    $\frac{8}{N_x}$            & \includegraphicspage[0.29\textwidth]{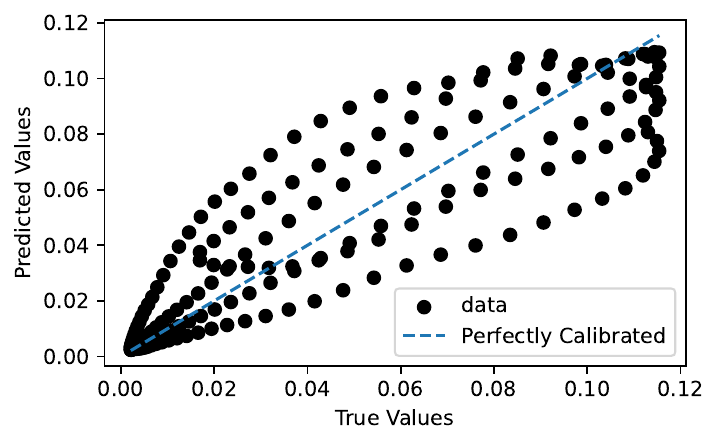} & \includegraphicspage[0.29\textwidth]{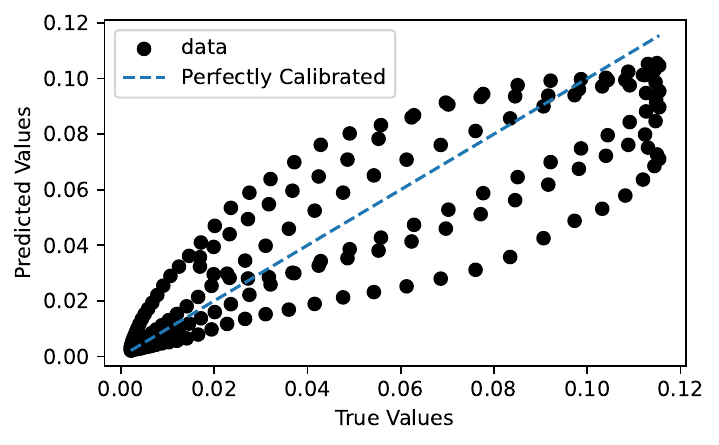} & \includegraphicspage[0.29\textwidth]{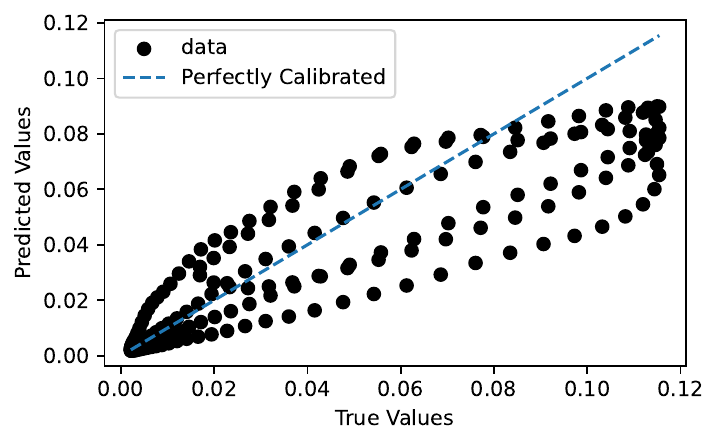} \\
    $\frac{16}{N_x}$           & \includegraphicspage[0.29\textwidth]{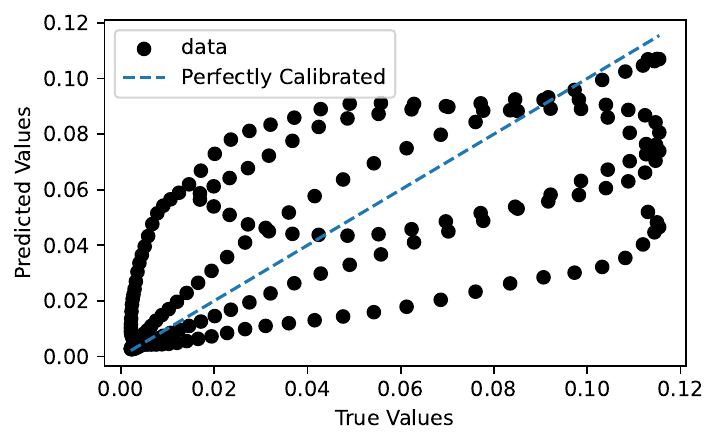} & \includegraphicspage[0.29\textwidth]{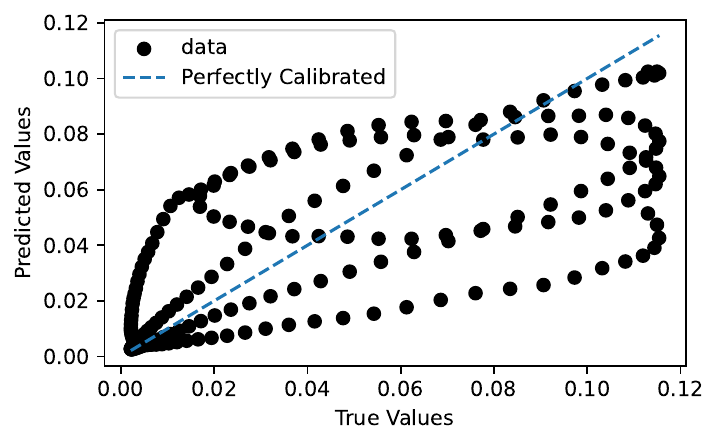} & \includegraphicspage[0.29\textwidth]{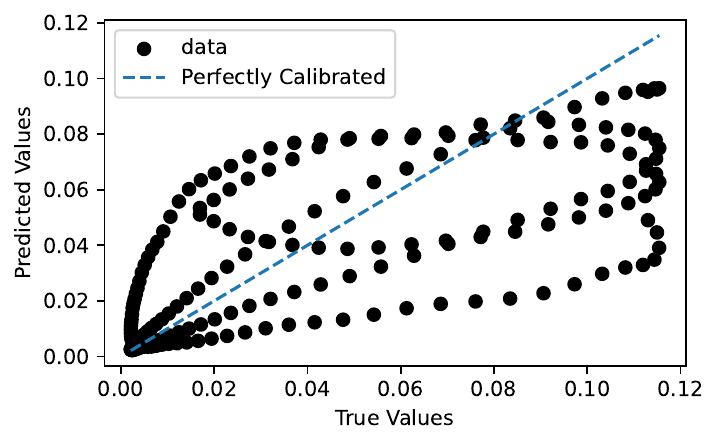} \\
    $\frac{32}{N_x}$           & & \includegraphicspage[0.29\textwidth]{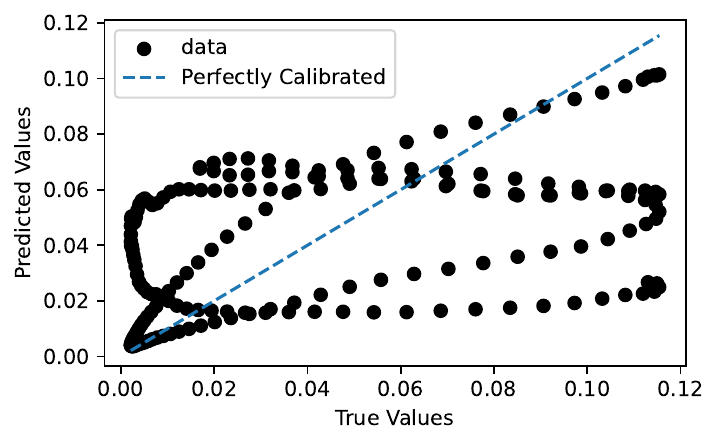} & \includegraphicspage[0.29\textwidth]{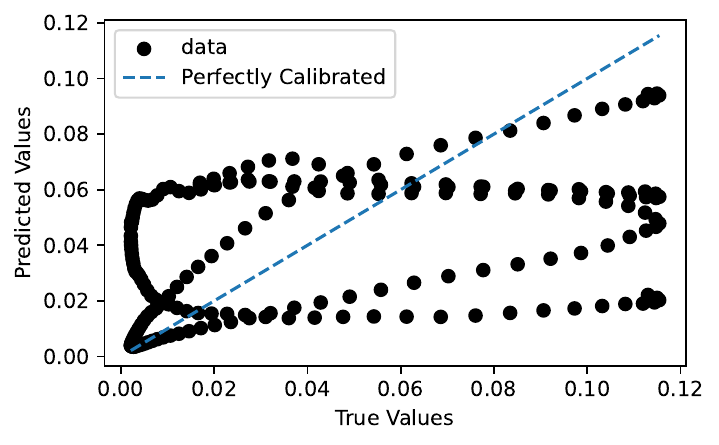} \\
  \end{tabular}
  \caption{True values $\sigma_3(x)$ against the estimated values $\hat\sigma_3(x)$ for fixed values of $\varepsilon$ (rows) and for fixed values of $h$ (columns).}
  \label{fig:calibration_curves}
\end{table}

\begin{table}[ht]
  \centering
  \begin{tabular}{c|ccc}
    $\varepsilon \backslash h$ & $\frac{2}{N_x}$ & $\frac{4}{N_x}$ & $\frac{8}{N_x}$ \\ \hline
    $\frac{4}{N_x}$            & \includegraphicspage[0.29\textwidth]{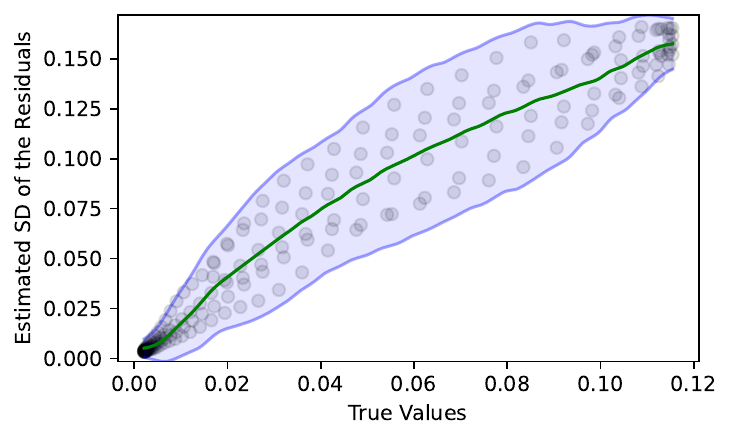} & \includegraphicspage[0.29\textwidth]{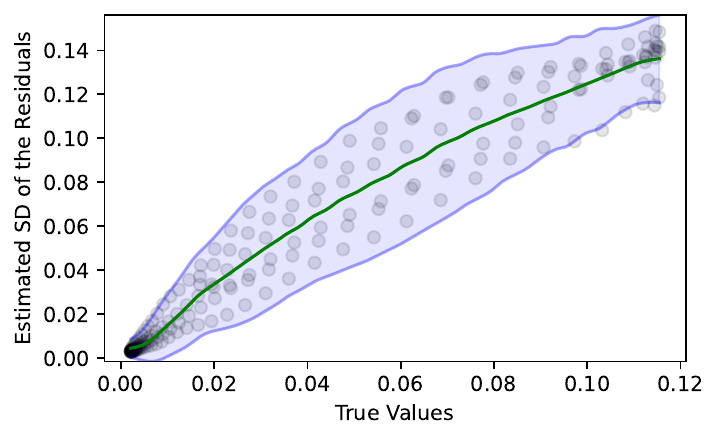} & \\
    $\frac{8}{N_x}$            & \includegraphicspage[0.29\textwidth]{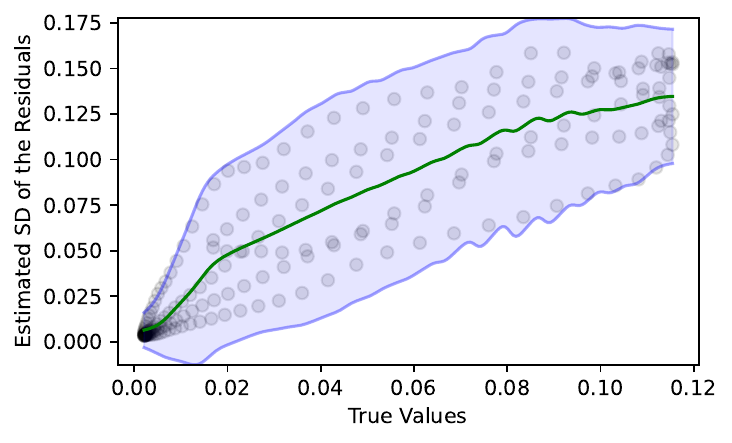} & \includegraphicspage[0.29\textwidth]{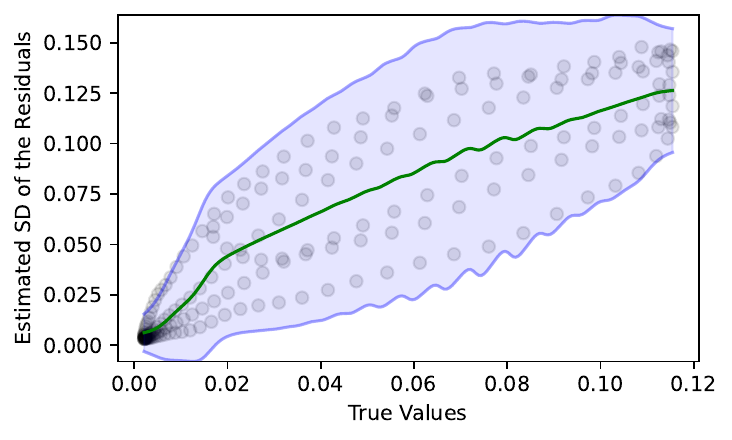} & \includegraphicspage[0.29\textwidth]{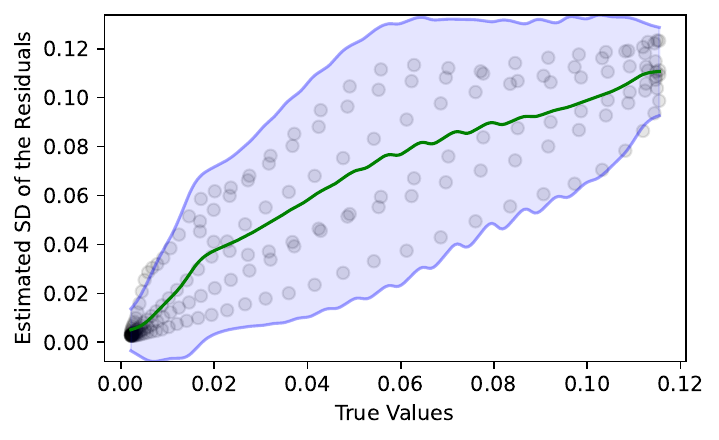} \\
    $\frac{16}{N_x}$           & \includegraphicspage[0.29\textwidth]{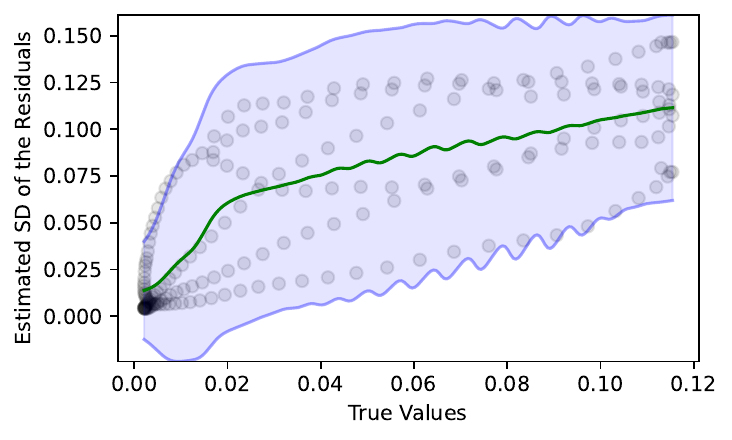} & \includegraphicspage[0.29\textwidth]{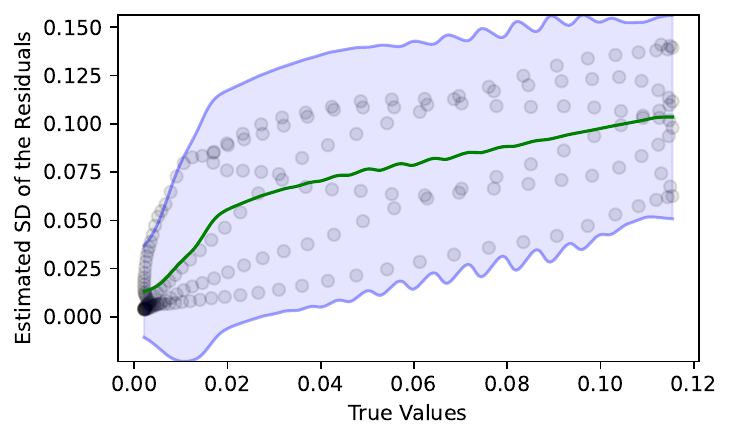} & \includegraphicspage[0.29\textwidth]{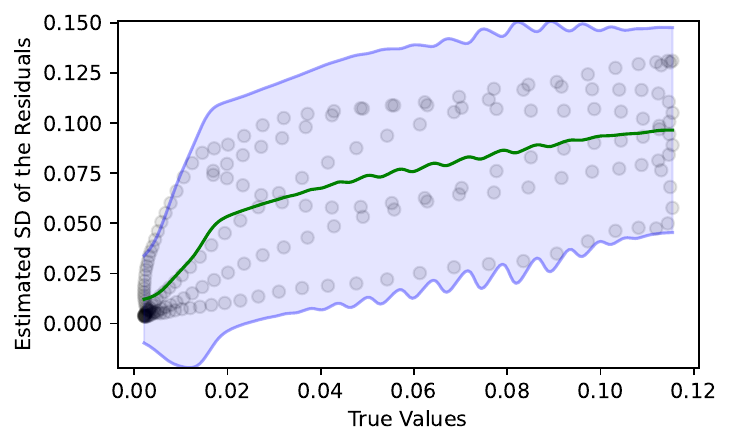} \\
    $\frac{32}{N_x}$           &                                                                                              & \includegraphicspage[0.29\textwidth]{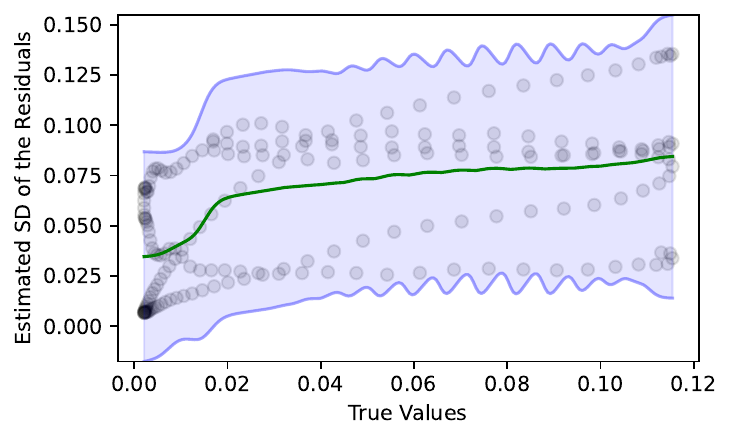} & \includegraphicspage[0.29\textwidth]{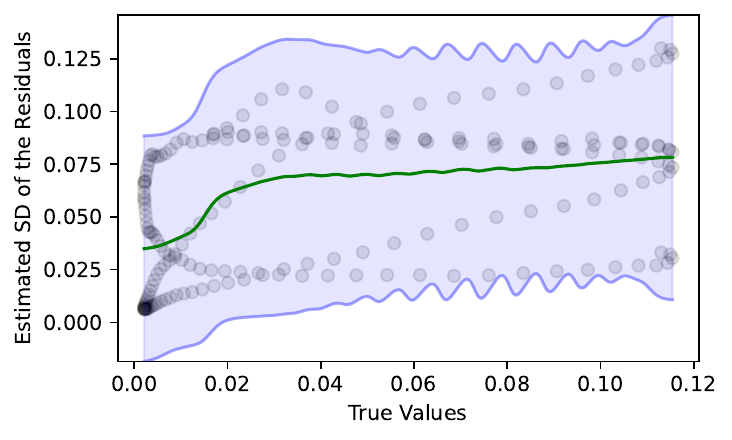} \\
  \end{tabular}
  \caption{True values $\sigma_3(x)$ against the estimated values $\hat\sigma_3(x)$ and estimated standard deviations for fixed values of $\varepsilon$ (rows) and for fixed values of $h$ (columns).}
  \label{fig:estimated_sd_residuals}
\end{table}

\begin{figure}[ht]
  \centering
  \includegraphics[width=0.7\textwidth]{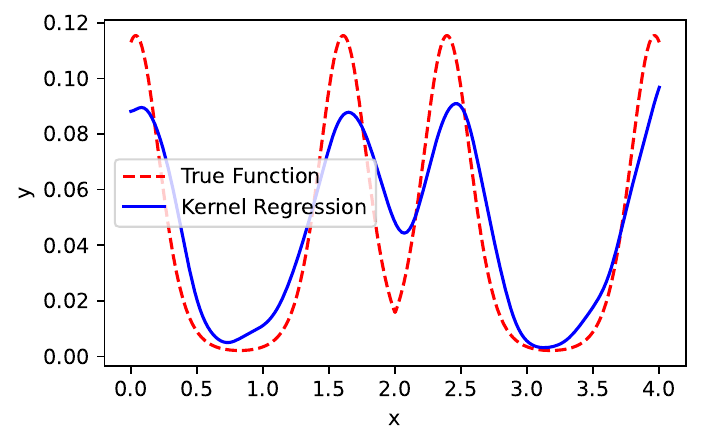}
  \caption{Illustration of the observed shifting phenomenon in the predictor.}
  \label{fig:shifting}
\end{figure}

\end{document}